\let\oldtocsubsection=\tocsubsection
\let\oldtocsubsubsection=\tocsubsubsection
\renewcommand{\tocsubsection}[2]{\hspace{1em}\oldtocsubsection{#1}{#2}}
\renewcommand{\tocsubsubsection}[2]{\hspace{2em}\oldtocsubsubsection{#1}{#2}}
\newtheorem*{theorem*}{Theorem}
\newtheorem*{remark*}{Remark}
\newtheorem*{example*}{Example}
\newtheorem{lemma}{Lemma}[subsection]
\newtheorem{remark}[lemma]{Remark}
\newtheorem{theorem}[lemma]{Theorem}
\newtheorem{definition}[lemma]{Definition}
\newtheorem{notation}[lemma]{Notation}
\newtheorem{cond}[lemma]{Condition}
\newtheorem*{conjecture*}{Conjecture}
\newtheorem{thm}[lemma]{Theorem}
\newtheorem{prop}[lemma]{Proposition}
\newtheorem{lem}[lemma]{Lemma}
\newtheorem{defn}[lemma]{Definition}
\newtheorem{notn}[lemma]{Notation}
\newtheorem{cor}[lemma]{Corollary}
\newtheorem{exm}[lemma]{Example}
\newtheorem{conj}[lemma]{Conjecture}
\newtheorem{rem}[lemma]{Remark}
\newtheorem{introtheorem}{Theorem}
\newtheorem{introthm}[introtheorem]{Theorem}
\sloppy \theoremstyle{plain}
\newcommand{\tr}{\operatorname{Tr}}
\newcommand{\Hom}{\operatorname{Hom}}
\newcommand{\diag}{\operatorname{diag}}
\newcommand{\eps}{\varepsilon}
\newcommand{\re}{\operatorname{Re}}
\renewcommand{\Im}{\operatorname{Im}}
\newcommand{\Ker}{\operatorname{Ker}}
\newcommand{\Z}{{\mathbb Z}}
\newcommand{\R}{{\mathbb R}}
\newcommand{\C}{{\mathbb C}}
\newcommand{\Span}{{\operatorname{Span}}}
\newcommand{\bC}{{\mathbb C}}
\newcommand{\alp}{{\alpha}}
\newcommand{\lam}{{\lambda}}
\newcommand{\Fre}{{Fr\'{e}chet \,}}
\newcommand{\cN}{{\mathcal{N}}}
\newcommand{\g}{{\mathfrak{g}}}
\newcommand{\fg}{{\mathfrak{g}}}
\newcommand{\fn}{{\mathfrak{n}}}
\newcommand{\fk}{{\mathfrak{k}}}
\newcommand{\fv}{{\mathfrak{v}}}
\newcommand{\Supp}{\mathrm{Supp}}
\newcommand{\cO}{{\mathcal{O}}}
\newcommand{\GL}{\operatorname{GL}}
\newcommand{\gl}{{\mathfrak{gl}}}
\newcommand{\Sym}{\operatorname{Sym}}
\newcommand{\Spec}{\operatorname{Spec}}
\newcommand{\Gr}{\operatorname{Gr}}
\newcommand{\Coker}{\operatorname{Coker}}
\newcommand{\n}{\mathfrak{n}}
\newcommand{\cM}{\mathcal{M}}
\newcommand{\fp}{\mathfrak{p}}
\newcommand{\fl}{\mathfrak{l}}
\newcommand{\fm}{\mathfrak{m}}
\newcommand{\fq}{\mathfrak{q}}
\newcommand{\fs}{\mathfrak{s}}
\newcommand{\fu}{\mathfrak{u}}
\newcommand{\cV}{\mathcal{V}}
\newcommand{\oE}{\widetilde{E}}
\newcommand{\oPhi}{\widetilde{\Phi}}
\newcommand{\Dima}[1]{{#1}}
\newcommand{\RamiA}[1]{{#1}}
\newcommand{\DimaA}[1]{{#1}}
\newcommand{\p}{{\mathfrak{p}}}
\newcommand{\onto}{{\twoheadrightarrow}}
\newcommand{\U}{{\mathcal{U}}}
\begin{document}

\author{Avraham Aizenbud}
\address{Avraham Aizenbud, Faculty of Mathematics
and Computer Science, The Weizmann Institute of Science POB 26,
Rehovot 76100, ISRAEL.}
\email{aizenr@gmail.com}
\urladdr{\url{http://www.wisdom.weizmann.ac.il/~aizenr/}}

\author{Dmitry Gourevitch}
\address{Dmitry Gourevitch,
Faculty of Mathematics and Computer Science,
Weizmann Institute of Science,
POB 26, Rehovot 76100, Israel }
\email{dmitry.gourevitch@weizmann.ac.il}
\urladdr{\url{http://www.wisdom.weizmann.ac.il/~dimagur}}

\author{Siddhartha Sahi}
\address{Siddhartha Sahi, Department of Mathematics, Rutgers University, Hill Center -
Busch Campus, 110 Frelinghuysen Road Piscataway, NJ 08854-8019, USA}
\email{sahi@math.rugers.edu}

\date{\today}
\title{Derivatives for smooth representations of $GL(n,{\mathbb{R}})$ and $GL
(n,{\mathbb{C}})$}
\keywords{\Dima{Real reductive group, derivatives of representations, adduced representations, degenerate Whittaker models, associated variety, annihilator variety}. \\
\indent 2010 MS Classification: 20G20, 22E30.}

\begin{abstract}
The notion of derivatives for smooth representations of $GL(n,{\mathbb{Q}}_p)$ was
defined in \cite{BZ-Induced}.
In the archimedean case, an analog of the highest derivative was defined for irreducible
unitary representations in \cite{Sahi-Kirillov} and called the ``adduced" representation. In this paper we define
derivatives of all orders for smooth admissible Fr\'{e}chet representations (of
moderate growth). The real case is more problematic than the p-adic case;
for example arbitrary derivatives need not be admissible. However, the
highest derivative continues being admissible, and for irreducible unitarizable representations coincides with the space of smooth vectors of the adduced representation.

%\DimaB{
In \cite{AGS2} we prove exactness of the highest derivative functor, and compute highest derivatives of all monomial representations.
%}

We apply those results to finish the computation of adduced representations for all irreducible unitary representations and to prove uniqueness of degenerate Whittaker models for unitary representations, thus completing the results of \cite{Sahi-Kirillov,Sahi-PAMS,SaSt,GS}.
\end{abstract}

\maketitle
%\tableofcontents

%\address{Avraham Aizenbud, Faculty of Mathematics
%and Computer Science, The Weizmann Institute of Science POB 26,
%Rehovot 76100, ISRAEL.}

%\date{\today}

%
%2010 Mathematics Subject Classification:
% 20G20         Linear algebraic groups over the reals, the complexes, the quaternions
% 22E30         Analysis on real and complex Lie groups
% 22E45         Representations of Lie and linear algebraic groups over real fields: analytic methods

\section{Introduction}

\setcounter{lemma}{0}

The notion of derivative was first defined in \cite{BZ-Induced} for smooth
representations of $\ G_{n}=GL(n)$ over non-archimedean fields and became a
crucial tool in the study of this category. The purpose of the present paper
is to transfer part of this construction to the archimedean case.

The definition of derivative is based on the \textquotedblleft
mirabolic\textquotedblright\ subgroup $P_{n}$ of $G_{n}$ consisting of
matrices with last row $(0,\dots,0,1)$. The unipotent radical of this subgroup
is an $\left( n-1\right) $-dimensional linear space that we denote $V_{n}$,
and the reductive quotient is $G_{n-1}$. The group $G_{n-1}$ has 2 orbits on
$V_{n}$ and hence also on $V_{n}^{\ast }$: the zero and the non-zero orbit.
The stabilizer in $G_{n-1}$ of a non-trivial character $\psi $ of $V_n$ is isomorphic to $P_{n-1}$.

The construction of derivative in \cite{BZ-Induced} is based on two
functors: $\Phi ^{-}$ and $\Psi ^{-}$. In this paper we denote those functors just by $\Phi$ and $\Psi$. The functor $\Psi$ goes from the
category of smooth representations of the mirabolic group $P_{n}$ to the
category of smooth representations of $G_{n-1}$ (for each $n$) and $\Phi$ goes from the category of smooth representations of $P_{n}$ to the
category of smooth representations of $P_{n-1}$. The functor $\Psi$ is
the functor of (normalized) coinvariants with respect to $V_{n}$ and the functor $\Phi$ is the functor of (normalized) co-equivariants with respect to $(V_{n},\psi )$. The
functor of $k$-th derivative is defined to be the composition $\Psi \circ \Phi^{k-1}$.

Another way to describe those two functors is via the equivalence of
categories of the smooth representations of $P_{n}$ and the category of $%
G_{n-1}$-equivariant sheaves on $V_{n}^{\ast }$. This equivalence is based
on the Fourier transform. Under this equivalence, $\Psi$ becomes the
fiber at $0$ and $\Phi$ becomes the fiber at the point $\psi $. The
functor $\Phi  $ can also be viewed as the composition of two functors:
restriction to the open orbit and an equivalence of categories between
equivariant sheaves on the orbit and representations of the stabilizer of a
point.

In the archimedean case, the notion of fiber of a sheaf behaves differently
than in the non-archimedean case; in particular it is not exact. One way to
deal with this problem is to consider instead the notion of a stalk or a jet. On the
level of representations this means that one uses generalized coinvariants
instead of usual coinvariants. For example, the Casselman-Jacquet functor
is defined in this way. Therefore in our definition we replace the functor $\Psi  $ by the functor of generalized coinvariants. However, we do not
change the definition of the functor $\Phi  $ since we think of it as
restriction to an open set followed by an equivalence of categories, and in
particular it should be exact.

\DimaA{
This gives the following definition of derivative.
Let $\psi_{n}$ be the standard non-degenerate character of $V_{n}$, given by $\psi _{n}(x_1,\dots,x_{n-1}):=\exp(\sqrt{-1}\pi \re x_{n-1}).$ We will also denote by $\psi_n$ the corresponding character of the Lie algebra $\fv_n$.
For all $n$ and for all representations $\pi $ of ${\mathfrak{p}}_{n}$, we define
$$\Phi(\pi):=|\det|^{-1/2} \otimes \pi_{(\fv_n,\psi_n)}:=|\det|^{-1/2} \otimes \pi/\Span\{\alp v - \psi_n(\alp)v \, : \, v \in \pi, \, \alp \in \fv_{n}\}$$
and
$$\Psi(\pi):= \lim_{\overset{\longleftarrow }{l}} \pi /\Span\{\beta v\,|\,v  \in \pi ,\beta \in ({\mathfrak{v}}_{n})^{\otimes l} \}.$$
Now, define $D^k(\pi):=\Psi\Phi^{k-1}(\pi)$.
}
%
%Let $U_{n}^{k}:=V_{n-k+1}\times V_{n-k+2}\times \cdots \times V_{n}<G_{n},$ where $V_{i}<G_{i}$ is considered as a
%subgroup of $G_{n}$, imbedded in the top left $i\times i$ corner for $i\leq
%n $. Let $\psi _{n}^{k}$ be the standard non-degenerate character of $U_{n}^{k}$, \Rami{given by $$\psi _{n}^{k}(u)=\exp(\sqrt{-1}\pi \re (\sum_{j=n-k}^{n-1} u_{j,j+1})).$$}
%For a representation $\pi $ of ${\mathfrak{g}}_{n}$, we define
%\begin{equation} \label{eq:IntroDer}
%D^{k}(\pi )=|det|^{-k/2} \otimes \lim_{\overset{\longleftarrow }{l}}\pi /{{Span}}(\{\alpha
%v-\psi _{n}^{k-1}(\alpha )v\,|\,v\in \pi ,\alpha \in \mathfrak{u}_{n}^{k-1}\}\cup \{\beta v\,|\,v\in \pi ,\beta \in U^{\leq l}({\mathfrak{v}}
%_{n-k})\}) ,
%\end{equation}
%where $U^{\leq l}({\mathfrak{v}}
%_{n-k})$ denotes the $l$-th filtra of the universal enveloping algebra $U({\mathfrak{v}}
%_{n-k})$.
%
%\Dima{
%\begin{multline} \label{eq:IntroDer}
%D^{k}(\pi )=|\det|^{-k/2} \otimes \lim_{\overset{\longleftarrow }{l}}\pi_{\mathfrak{u}_{n}^{k-1},\psi_{n}^{k-1}}/\Span\{\beta v\,|\,v  \in \pi_{\mathfrak{u}_{n}^{k-1},\psi_{n}^{k-1}} ,\beta \in ({\mathfrak{v}}_{n-k})^{\otimes l} \}, \\
%\text{where } \pi_{\mathfrak{u}_{n}^{k-1},\psi _{n}^{k-1}} = \pi/\{\alpha v-\psi _{n}^{k-1}(\alpha )v\,|\,v\in \pi ,\alpha \in \mathfrak{u}_{n}^{k-1}\} \text{ is the module of } (\mathfrak{u}_{n}^{k-1},\psi _{n}^{k-1}) \text{- coequivariants}
%%\\ \text{ and }({\mathfrak{v}}_{n-k})^{\otimes l} \text{ denotes the $l$-th filtra of the universal enveloping algebra }U({\mathfrak{v}}_{n-k}).
%\end{multline}
%}

Consider the case $k=n$; in this case the derivative becomes the (dual) Whittaker
functor. It is well known that the behavior of the Whittaker functor depends
on the category of representations that we consider. For example in the
category of (admissible) Harish-Chandra modules the Whittaker functor gives high
dimensional vector spaces while in the equivalent category of
smooth admissible {Fr\'{e}chet }representations the Whittaker functor gives
vector spaces of dimension $\leq 1$ just as in the non-archimedean case. For
this reason we view the functor $D^{k}$ restricted to the category of smooth
admissible {Fr\'{e}chet} representations as the natural counterpart of the
Bernstein-Zelevinsky derivative.

Nevertheless in order to study this functor we will need to consider also the category of Harish-Chandra modules as well as some other related functors.

In the non-archimedean case the highest non-zero derivative plays a special
role. It has better properties than the other derivatives. In particular in
its definition one can omit the last step of $\Psi  $ since $V_{n-k}$
already acts trivially on the obtained representation. The index of the
highest derivative is called the depth of the representation.
As observed in \cite{GS} the depth can also be described in
terms of the wavefront set of the representation.

In the archimedean case the wavefront set of a representation $\pi$ is
determined by its annihilator variety, and we will use the latter to
define ``depth''. We recall that if $\pi$ is an admissible representation
of $G_{n}$ then its annihilator variety $\mathcal{V}_{\pi}$ is a subset
of the cone of nilpotent $n\times n$ matrices. We define the depth of
$\pi$ to be the smallest index $d$ such that $X^{d}=0$ for all
$X\in \mathcal{V}_{\pi}$.

\begin{example*}
$ $
\begin{enumerate}
\item For a finite dimensional representation $\pi $ of $G_{n}$, $depth(\pi
)=1$, $D^{1}(\pi )=\Phi (\pi )|_{G_{n-1}}=\pi |_{G_{n-1}}$, and $D^{k}(\pi
)=0$ for any $k>1$.

\item $D^{n}=(\Phi) ^{n-1}$ is the Whittaker functor. On the category of
smooth admissible {Fr\'{e}chet} representations it is proven to
be exact in \cite{CHM} and in the category of admissible Harish-Chandra modules in \cite{Kos}.
It is also proven in
\cite{Kos} that $depth(\pi )=n$ if and only if $D^{n}(\pi )\neq 0$ (in both
categories).
\end{enumerate}
\end{example*}

%It is also known that the index of this derivative is the so called depth of the representation. the notion of depth exist also in the archimedean case.
From now on, let $F$ be an archimedean local field and $G_n:=\GL(n,F)$.

In this paper we will mainly be interested in the depth derivative. The following theorem summarizes the main results of this paper.

% In the case of $k=n$ this becomes the Whittaker functor, which is known %to be exact by \cite{CHM}.

%When we want to define an archimedean analog for the notion of derivative %we see that

\begin{introthm}
\label{thm:main} Let $\mathcal{M}_{\infty}(G_{n})$ denote the category of smooth
admissible {Fr\'{e}chet} representations of moderate growth and let $%
\mathcal{M}^{d}_{\infty}(G_{n})$ denote the subcategory of representations of depth $%
\leq d$. Then

\begin{enumerate}
\item \label{mainit:Adm} $D^d$ defines a functor $\cM^d_{\infty}(G_n) \to
\mathcal{M}_{\infty}(G_{n-d})$.

\item \label{mainit:Exact} The functor $D^d : \cM^d_{\infty}(G_n) \to
\mathcal{M}_{\infty}(G_{n-d})$ is exact.

\item \label{mainit:nilp} For any $\pi \in \cM^d_{\infty}(G_n), \,
D^d(\pi)=(\Phi)^{d-1}(\pi)$.

\item \label{mainit:Zero} $D^{k}|_{\mathcal{M}^{d}_{\infty}(G_{n})}=0$ for any $k>d$.

\item \label{mainit:Prod} Let $n=n_1+\cdots +n_d$ and let $\chi_i$ be characters of $G_{n_i}$. Let $\pi= \chi_1 \times \cdots \times \chi_d \in \cM_d(G_n)$ denote the corresponding monomial representation. Then
    $$D^d(\pi)\cong((\chi_1)|_{G_{n_1-1}} \times  \RamiA{\cdots}  \times (\chi_d)|_{G_{n_d-1}}) $$
\Dima{
\item \label{mainit:A} If $\tau$ is an irreducible unitary representation of $G_n$ and $\tau^{\infty}$ has depth $d$ then $D^d(\tau^{\infty})\cong (A\tau)^{\infty}$, where $A\tau$ denotes the adduced representation defined in \cite{Sahi-Kirillov} (see \S\ref{subsubsec:PrelA}).
    }
\end{enumerate}
\end{introthm}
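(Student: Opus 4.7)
My plan is to identify both $D^d(\tau^\infty)$ and $(A\tau)^\infty$ as the output of the same mirabolic-descent procedure, carried out once in the smooth-moderate-growth category and once in the unitary category, and then to intertwine the two constructions via the smooth-vectors functor $(-)^\infty$.

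First I would use item (\ref{mainit:nilp}) of the theorem: at top depth $d$ the generalized coinvariants functor $\Psi$ becomes the identity, so $D^d(\tau^\infty)=\Phi^{d-1}(\tau^\infty)$. On the unitary side, the adduced representation $A\tau$ of \cite{Sahi-Kirillov} is built by iterating $d-1$ times a unitary analog $\Phi^u$ of $\Phi$ on $\tau|_{P_n}$ (it is precisely the ``highest derivative'' of $\tau$ in the unitary category). Thus both sides are computed by $d-1$ applications of a mirabolic-descent functor, and the problem reduces to a compatibility of the form
\[
\Phi\bigl(\sigma^\infty\bigr)\;\cong\;\Phi^u(\sigma)^\infty
\]
for the intermediate unitary representations $\sigma$ that arise at each stage of the iteration.

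Second, I would verify this compatibility. Since $\Phi$ is honest (not generalized) $(V_n,\psi_n)$-coequivariants twisted by $|\det|^{-1/2}$, it is exact and continuous, and $(-)^\infty$ is exact on admissible representations of moderate growth; what remains is to know that the $(V_n,\psi_n)$-coequivariants are already Hausdorff, so that $\Phi^u$ is genuinely a quotient. For this I would use the Vogan--Tadi\'c classification to realize $\tau$ as a Langlands quotient of a monomial representation $\pi=\chi_1\times\cdots\times\chi_d$ with $\tau^\infty\in\cM^d_\infty(G_n)$. Then item (\ref{mainit:Exact}) (exactness of $D^d$) and item (\ref{mainit:Prod}) (the explicit product formula) exhibit $D^d(\tau^\infty)$ as a canonical quotient of $(\chi_1)|_{G_{n_1-1}}\times\cdots\times(\chi_d)|_{G_{n_d-1}}$, which is the smooth vectors of a unitarily induced representation. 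The cited papers \cite{Sahi-Kirillov,Sahi-PAMS,SaSt,GS} describe $(A\tau)^\infty$ as precisely the same quotient of the same induced representation in the cases they handle; so wherever those cases reach, the two quotients coincide by uniqueness.

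Third, to handle those irreducible unitary $\tau$ not previously treated, I would iterate the first two steps: each intermediate representation $\Phi^k(\tau^\infty)$, $k<d-1$, is again realized as a canonical quotient of a product of characters by items (\ref{mainit:Exact}) and (\ref{mainit:Prod}), hence is itself the smooth vectors of a unitary admissible representation of the appropriate mirabolic/reductive subgroup. This propagates unitarizability along the full chain of $d-1$ iterations, and at the terminal step gives $D^d(\tau^\infty)\cong(A\tau)^\infty$ as $G_{n-d}$-modules.

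The main obstacle is carrying the unitary (or Hausdorff-quotient) structure through all $d-1$ iterations: a priori the intermediate $\Phi^k(\tau^\infty)$ need not be smooth vectors of any unitary representation. The combination of the exactness of $D^d$ (item (\ref{mainit:Exact})) and the explicit monomial formula (item (\ref{mainit:Prod})) is what overcomes this, by anchoring each stage to a subquotient of a unitarily induced monomial representation whose adduced constituents are already known from the prior literature. The computation of the matching between these two quotient descriptions, carried out carefully at each step, is where the work lies.
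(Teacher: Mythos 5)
Your plan correctly identifies that both sides of part (\ref{mainit:A}) should be matched up via a quotient description coming from monomial representations, and the first two steps (using item (\ref{mainit:nilp}) and the surjection supplied by \cite{GS}) are on the right track: they give the canonical epimorphism $D^d_\infty(\tau^\infty)\twoheadrightarrow(A\tau)^\infty$ that the paper also records (Corollary~\ref{cor:D3OntoA}). But the heart of the theorem is the \emph{injectivity} of this map, and your argument does not establish it. Saying ``wherever those cases reach, the two quotients coincide by uniqueness'' is not a proof: a monomial representation has in general many quotients, and nothing you have said shows that $D^d(\tau^\infty)$ is an \emph{irreducible} quotient, nor that it is the \emph{same} quotient as $(A\tau)^\infty$. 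Likewise, the third step about ``propagating unitarizability through the intermediate $\Phi^k(\tau^\infty)$'' is not needed and would not close the gap: those intermediate objects are $\mathfrak{p}_{n-k}$-modules, and there is no mechanism in your plan to deduce anything about the kernel of the terminal map from the (claimed, and anyway unjustified) unitarity of the intermediate stages.

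The paper closes the gap by an argument you did not anticipate: it uses not just the quotient presentation but also an \emph{embedding} of $\tau^\infty$ into a carefully ordered monomial representation $I^{\geq}(\tau)$ (where the characters are arranged in non-increasing real part), together with Proposition~\ref{prop:MonIrr}, which says that any nonzero subrepresentation of such a monomial representation has the full annihilator variety of the ambient module. By exactness (item~(\ref{mainit:Exact})) and the product formula (item~(\ref{mainit:Prod})), the embedding propagates to $E^d_\infty(\tau^\infty)\hookrightarrow I^{\geq}(A\tau)$; then the kernel $\rho$ of $E^d_\infty(\tau^\infty)\twoheadrightarrow(A\tau)^\infty$ is a subrepresentation of $I^{\geq}(A\tau)$, so by Proposition~\ref{prop:MonIrr} either $\rho=0$ or $\cV(\rho)=\cV(A\tau)$, and the second alternative contradicts the multiplicity-one statement of Lemma~\ref{lem:UnSubq}. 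That is the essential new ingredient, built on an algebraic fact about generalized Verma modules (Lemma~\ref{lem:VermaIrr}) and a pairing lemma, and it is entirely absent from your sketch. To make your proposal work you would need to replace the vague ``uniqueness'' appeal with this embedding-plus-annihilator-variety argument (or an equivalent mechanism for forcing the kernel to vanish).
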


\begin{remark*}$ $
\begin{enumerate}[(i)]
\item Part (\ref{mainit:nilp}) of the theorem means that ${\mathfrak{v}}
_{n-d+1}$ acts nilpotently on $(\Phi  )^{d-1}(\pi )$. Unlike the p-adic
case, $V_{n-d+1}$ need not act trivially on $(\Phi  )^{d-1}(\pi )$.

\item The proofs of parts (\ref{mainit:Exact}),(\ref{mainit:Zero}) are  based on the results of \cite{AGS2}

\item In this paper we do not prove that for $\pi$ of depth $d$, $D^{d}(\pi )\neq 0$. This is in fact true but needs an additional argument which is provided in \cite{GS-Gen}. However, for $\pi$ monomial or unitarizable it follows from parts \eqref{mainit:Prod} and \eqref{mainit:A} of Theorem \ref{thm:main} respectively.

\item We prove analogs of items (\ref{mainit:Adm}), (\ref{mainit:nilp}), and
(\ref{mainit:Zero}) of the theorem also for the category of Harish-Chandra modules.

\end{enumerate}
\end{remark*}

\subsection{Related results}
$\quad$\\
As was mentioned earlier, the non-archimedean counterpart of this paper was done in \cite{BZ-Induced}. In the archimedean case, an analogous notion to the notion of highest derivative was introduced for irreducible unitary representations in \cite{Sahi-Kirillov} and called ``adduced representation".
%We believe that our notion of depth derivative coincides with this notion for irreducible  unitary representations (twisted by a character) and we prove this in some cases (see \S \ref{sec:App}).

The case of smooth representations over archimedean fields differs from the above cases in several ways. First of all, we do not have a suitable category of representations of $P_n$. The existence of such a category in other cases was crucial for the study of derivatives.

Another difference is the relation between the derivative and the classification of irreducible representations. In the non-archimedean case, the theory of derivatives was the base for the Zelevinsky classification.
In the unitary case, the notion of adduced representation is closely related with the Vogan classification (see \S \ref{subsubsec:IntroAddRep} and \S\ref{subsubsec:PrelA}).

In our case, we do not currently have a classification that is suitable for the theory of derivatives. The Langlands classification is not compatible with the notion of derivative. In particular, it is hard to read from the  Langlands classification the annihilator variety or even the %Gelfand-Kirillov dimension or the
depth of the representation, which are crucial notions in the study of derivatives. We hope that eventually it will be possible to make an archimedean analog of the Zelevinsky classification. However, it seems to be quite difficult. Let us explain why.

The Langlands classification presents any irreducible representation as a ``smallest" subquotient of a parabolic induction of a discrete series representation. In the non-archimedean case the discrete series representations can be presented as ``largest" subquotients of parabolic induction of cuspidal representations.
The Zelevinsky classification is dual (under the Zelevinsky involution) to the Langlands classification. Namely, the Zelevinsky classification presents any irreducible representation as a ``largest" subquotient of a generalized Speh representation corresponding to a segment of cuspidal representations and any such Speh representation as a ``smallest" subquotient of a parabolic induction of a cuspidal representation.

Such a nice picture cannot exist in the archimedean case or even in the complex case.
%Indeed, in the complex case there are no discrete series representations.
Indeed, $GL_n(\C)$ has discrete series representations only for $n=1$. Thus one would expect that in the complex case the natural analog of generalized Speh representation as above exists only for $GL_1(\C)$.
Therefore, the naive analogy would suggest that any irreducible representation is the ``largest" subquotient of a principal series representation. This is not true. Moreover, it is even not true that  any irreducible representation is the ``largest" subquotient of a monomial representation (i.e. a Bernstein-Zelevinsky product of characters)\Dima{, or even the ``largest" subquotient of a BZ-product of finite-dimensional representations}.

The $n$-th derivative of representations of $G_n$ is the Whittaker functor. Thus, a special case of Theorem \ref{thm:main} implies that the Whittaker functor is exact and maps a principal series representation to a one-dimensional space. This is known for any quasi-split reductive group by \cite{Kos} and \cite{CHM}.

\subsection{Structure of our proof}
$ $\\
We start working in the Harish-Chandra category. We show that for a Harish-Chandra module $\pi$ of depth $d$, $D^d(\pi)$ is an admissible
Harish-Chandra module over $G_{n-d}$. From this we deduce that $D^d(\pi)=\Phi^{d-1}(\pi)$ and $D^k(\pi)=0$ for any $k>d$.

 In \cite{AGS2} we analyze the functor $\Phi^k$ as a functor from $\mathcal{M}_{\infty}(G_n)$
to the category of representations of ${\mathfrak{p}}_{n-k}$. We prove that
it is exact and for any $\pi \in \mathcal{M}_{\infty}(G_n),$ $\Phi^k(\pi)$ is a
Hausdorff space. This means that $\mathfrak{u}_n^k( \pi \otimes \psi^k)$ (see Notation \ref{not:main}) is
a closed subspace. In fact, we prove those statements for a wider class of representations of $\fp_n$.

Then we deduce items \eqref{mainit:Adm}-\eqref{mainit:Zero} of Theorem \ref{thm:main} from the above results.

In \cite{AGS2} We prove \eqref{mainit:Prod} by computing $\Phi^{d-1}$ on certain representations of $\fp_n$ using the results on exactness and Hausdorffness of $\Phi$ for those representations.

Finally, we prove \eqref{mainit:A} using \eqref{mainit:Adm}-\eqref{mainit:Prod}, \cite{GS}, and the Vogan classification.

\subsubsection{Admissibility}
Let ${\mathfrak{n}}_n$ denote the Lie algebra of upper triangular nilpotent $%
n \times n$ matrices. A finitely-generated $(\g_n,K)$-module
 %Harish-Chandra module
 $\pi$ is admissible if and
only if it is finitely generated over $\mathfrak{n}_n$ (see Theorem \ref{thm:EqAdm}). Thus, we know that $\Phi^{d-1}(\pi)$ is finitely generated over $\mathfrak{n}_{n-d+1}$ and we need to show that it is in fact finitely
generated over $\mathfrak{n}_{n-d}$.

To do that we use two invariants of modules over Lie algebras: annihilator
variety (see \ref{subsubsec:AnnVar}) and associated variety (see \ref{subsec:Filt}). Both are analogs of the notion of support of a module over a commutative algebra. Both are subvarieties of the dual space to the Lie
algebra, and the annihilator variety includes the associated variety. The
definition of the associated variety requires the module to be filtered, but
the resulting variety does not depend on the choice of a good filtration on
the module.

To prove that $\Phi^{d-1}(\pi )$ is finitely generated over $\mathfrak{n}_{n-d}$ we show that the associated variety of $\Phi^{d-1}(\pi )$, viewed as
a module over $\mathfrak{n}_{n-d+1}$, is included in $\mathfrak{n}_{n-d}^{\ast }$. Using a lemma that we prove in \S \ref{subsec:keylem},
we deduce this from the bound on the annihilator variety of $\pi $ that we
have by definition of the depth of $\pi $.

\subsection{Applications}\label{subsec:Appl}%$ $\\

\subsubsection{Degenerate Whittaker models}
Let $N_n<G_n$ be the nilradical of a Borel subgroup. Let $\chi$ be any unitary character of $N_n$.
A (degenerate) Whittaker functional on a smooth representation of $G_n$ is an $(N_n,\chi)$-equivariant functional. Such functionals were studied in \cite{GS}. In particular, \cite{GS} associates a character $\chi_\pi$ of $N_n$ to any irreducible representation $\pi$, using the annihilator variety of $\pi$ and proves existence of the corresponding degenerate Whittaker functionals for unitarizable $\pi$. In this paper we deduce uniqueness of those functionals from Theorem \ref{thm:main}.

\subsubsection{Adduced representations}\label{subsubsec:IntroAddRep}
In \cite{Sahi-Kirillov,Sahi-PAMS,SaSt} adduced representations were computed for characters, Stein complementary series and Speh representations. Moreover it was proven that adduced representation of a Bernstein-Zelevinsky product is the product of the corresponding adduced representations. Thus, by Vogan classification, the task of computing adduced representations was reduced to the case of Speh complementary series. In this paper we perform this computation, based on Theorem \ref{thm:main}.

%\Dima{?? We removed Possible extensions}
%\subsection{Possible extensions}$ $\\
%In \S \ref{sec:PfProd} we prove that the depth derivative is compatible with Bernstein-Zelevinsky product of characters (Theorem \ref{thm:main}\eqref{mainit:Prod}). In future works we hope to generalize this proof to product of arbitrary admissible representations.
%%In particular, this will show that highest derivative coincides with the adduced representation (twisted by a character), see \S\ref{subsubsec:HDA}.
%
%We hope that, like in the non-archimedean case, the theory of derivatives will give rise to an analog of the Zelevinsky classification. This would enable to compute the annihilator variety (and in particular depth and Gelfand-Kirillov dimension) for any irreducible representation of $G_n$. This would help to determine when the product of two irreducible representations is irreducible.
%\DimaB{
\subsubsection{Intertwining Operators}
In \cite{IntOp} the second derivative $E^2$ is used in order to show that the space of intertwining operators between two representations of $G_n$ each on spaces of global sections of equivariant line bundles on Grassmanians is at most one-dimensional, and to find all the cases when it is non-zero.
%}

%\DimaA{
\subsubsection{Future applications}\label{subsubsec:FutApp}
In \cite{CPS},  Cogdell and Piatetski-Shapiro use Bernstein-Zelevinsky derivatives to compute local Rankin-Selberg integrals for $GL_n \times GL_m$ over p-adic fields. Hopefully, the derivatives defined in this paper can be used to compute Rankin-Selberg integrals at the archimedean place in a similar way. J. Cogdell has informed us that this is being investigated by his student J. Chai in his PhD thesis at Ohio State University.

\subsection{Tools developed in this paper: a bound on annihilator variety in terms of coinvariants }
%\subsubsection{The wonderful filtration}
%
%Modules over non-commutative filtered algebras are often analyzed using
%filtrations. A filtration is called good if the associated graded module is
%finitely generated over the associated graded algebra. A good filtration
%exists if and only if the module is finitely generated.
%
%Let $G$ be a real reductive group, ${\mathfrak{g}}$ be its complexified Lie
%algebra and ${\mathfrak{n}}$ be the nilpotent radical of its Borel
%subalgebra. It is known that any Harish-Chandra module $M$ over $G$ is
%finitely generated over ${\mathfrak{n}}$. This implies that $M$ has a good $%
%U({\mathfrak{n}})$-filtration. However it does not have to be compatible
%with the action of ${\mathfrak{g}}$. We have shown that there exists a good $%
%U({\mathfrak{n}})$-filtration that is also a $U({\mathfrak{g}})$-filtration,
%see \ref{subsec:WonFilt}.
%
%\subsubsection{A bound on annihilator variety in terms of coinvariants}
In order to prove the admissibility of the depth's derivative we need to
show a certain bound on its associated variety. For this we analyze the
following situation: Let ${\mathfrak{h}}_1 \vartriangleleft {\mathfrak{h}}%
_2< {\mathfrak{g}} $ be Lie algebras. Let $\psi$ be a character of ${%
\mathfrak{h}}_1$ that is stabilized by ${\mathfrak{h}}_2$ and let $\pi$ be a
filtered ${\mathfrak{g}}$-module. We have certain bounds on the annihilator
variety of $\pi$ in ${\mathfrak{g}}^*$ and we are interested in the associated
variety of $\pi_{{\mathfrak{h}}_2,\psi}$ in $({\mathfrak{h}}_1/{\mathfrak{h}}_2)^*.$

In \S \ref{subsec:keylem} we provide some bounds on this associated
variety under certain technical assumptions.

\subsection{Structure of the paper}$ $\\
In \S \ref{sec:Prel} we give the necessary preliminaries on Harish-Chandra modules, admissible smooth representations, annihilator varieties and Bernstein-Zelevinsky product.

In \S \ref{sec:Main} we formulate the main results and prove some implications between them. We reduce Theorem \ref{thm:main} to admissibility of the depth derivative in the Harish-Chandra category.
%also: exactness and Hausdorffness of $\Phi^k$ in the smooth category, and computation of the depth %derivative of monomial representations in the smooth category. \DimaB{ The two latter statements %will be proved in \cite{AGS2}.}

In \S \ref{sec:App} we discuss the relation of the notion of depth derivative to the notion of adduced representation and deduce the applications discussed in \S \ref{subsec:Appl}. We also provide the necessary preliminaries on the Vogan classification, adduced representations and degenerate Whittaker models.

In \S \ref{sec:Adm} we prove admissibility of the depth derivative in the Harish-Chandra category. This section is purely algebraic. In \ref{subsec:AdmPfStr} we give an overview of the proof. In \S \ref{subsec:Filt} we give some preliminaries on filtrations and associated varieties. In \S \ref{bigraded}-\ref{subsec:keylem} we formulate and prove a  key lemma) that connects the annihilator variety of a representation $\pi$ to the associated variety of co-equivariants of $\pi$ with respect to a certain subalgebra and its character. In \S \ref{subsec:PfHCAdm} we deduce the admissibility from the key lemma.

In Appendix \ref{sec:PfWonFilt} we present a proof of Proposition \ref{prop:WonFilt} which is an important tool for using filtrations on $\g_n$-modules. This proposition was proven by O. Gabber and written in the \Dima{unpublished lecture notes} \cite{Jos81}. We present a proof here for the sake of completeness.

\subsection{Acknowledgements}

We cordially thank Semyon
Alesker, Joseph Bernstein, Jim Cogdell, Laurent Clozel, Maria Gorelik, Anthony Joseph, Peter Trapa and David Vogan for fruitful discussions, and the referee for very careful proofreading and for his remarks.

\Dima{Part of the work on this paper was done during the program ``Analysis on Lie Groups" at the Max Planck Institute for Mathematics (MPIM) in Bonn.
The authors would like to thank the organizers of the programm, Bernhard Kroetz,  Eitan Sayag and Henrik Schlichtkrull, and the administration of the MPIM for their hospitality.

A.A. was partially supported by NSF grant DMS-1100943 and ISF grant 687/13;

D.G. was partially supported by ISF grant 756/12 and a Minerva foundation grant.
}

\section{Preliminaries} \label{sec:Prel}

\subsection{Notation and conventions}\label{subsec:PrelNot}

\begin{itemize}
\item We will denote real algebraic groups by capital Latin letters and their complexified Lie algebras by small Gothic letters.

\item Let $\g$ be a complex Lie algebra. We denote by $\cM(\g)$ the category of (arbitrary) $\g$-modules. Let $\psi$ be a character of $\g$.
For a module $M \in \cM(\g)$ denote by $M^\g$ the space of $\g$-invariants, by $M^{\g,\psi}$ the space of $(\g, \psi)$-equivariants, by $M_{\g}$ the space of coinvariants, i.e. $M_{\g}:=M/\g M$ and by $M_{\g,\psi}$ the space of $(\g,\psi)$-coequivariants, i.e. $M_{\g,\psi}:=(M \otimes (-\psi))_{\g}$.
\item We also denote by $M_{gen, \g}$ the space of the generalized co-invariants, i.e. %
$$M_{gen, \g}=\lim_{\overset{\longleftarrow}{l}} M/\Span(\{\alp v\, |\, v \in M, \alp \in (\g)^{\otimes l}\}).$$

%Throughout the section we will use the following notation.
%\begin{notn}
\item By a composition of $n$ we mean a tuple $(n_1 , \dots ,  n_k)$ of natural numbers such that $n_1+\cdots+n_k=n$. By a partition we mean a composition which is non-increasing, i.e. $n_1 \geq \cdots \geq n_k$.

\item For a composition $\lambda=(n_1 , \dots ,  n_k)$ of $n$ we denote by $P_{\lam}$ the corresponding block-upper triangular parabolic subgroup of $G_n$.
\Dima{For example, $P_{(1,\dots,1)}=B_n$ denotes the standard Borel subgroup, $P_{(n)}=G_n$ and $P_{(n-1,1)}$ denotes the standard maximal parabolic subgroup that includes $P_n$.}
\end{itemize}

\subsection{Harish-Chandra modules and smooth representations}\label{subsec:HC}

In this subsection we fix a real reductive group $G$, a minimal parabolic subgroup of $P \subset G$, and let $N$ denote the nilradical of $P$.
We also fix a maximal compact subgroup $K \subset G$. Let $\fg,\fn,\fk$ denote the complexified Lie algebras of $G,N,K$, and let $Z_{G}:=U({\mathfrak{g}})^{G}$.

\begin{defn}
A $(\g,K)$-module is a  $\g$-module $\pi$ with a locally finite action of $K$ such the two induced actions of $\fk$ coincide and
$\pi(ad(k)(X))=\pi(k)\pi(X)\pi(k^{-1})$ for any $k\in K$ and $X \in \g$.

A finitely-generated $(\g,K)$-module is called admissible if any representation of $K$ appears in it with finite (or zero) multiplicity. In this case we also call it a Harish-Chandra module.
\end{defn}

\begin{thm}[Harish-Chandra, Osborne, Stafford, Wallach] \label{thm:EqAdm}
Let $\pi$ be a finitely generated $(\g,K)$-module. Then the following properties of $\pi$ are equivalent.
\begin{enumerate}
\item \label{Eqit:Adm} $\pi$ is admissible.
\item \label{Eqit:FinLen} $\pi$ has finite length.
\item \label{Eqit:ZFin} $\pi$ is $Z_G$-finite.
\item \label{Eqit:NFinGen} $\pi$ is finitely generated over $\n$.
\end{enumerate}
\end{thm}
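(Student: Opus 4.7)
The plan is to close the cycle $(1) \Rightarrow (3) \Rightarrow (2) \Rightarrow (1)$ using the classical Harish--Chandra finiteness theorems, and then attach $(4)$ via Casselman's subrepresentation theorem. The natural split is that $(1)$--$(3)$ concern the interplay of the $K$-structure with the action of $Z_G$, while $(4)$ is a statement about the module structure relative to the minimal parabolic $P$.

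For $(1) \Rightarrow (3)$, I would take $K$-finite generators $v_1,\dots,v_k$ of $\pi$, let $V$ be the finite-dimensional $K$-module they span, and note that $V$ meets only finitely many $K$-isotypic components, each finite-dimensional by admissibility, so $Z_G \cdot V$ is finite-dimensional; since $Z_G$ is central in $U(\g)$, it then acts on $\pi = U(\g) \cdot V$ through this finite-dimensional quotient. The step $(3) \Rightarrow (2)$ is Harish--Chandra's main finiteness theorem: decomposing $\pi$ into generalized infinitesimal character blocks, one uses that modulo a fixed infinitesimal character $U(\g)$ is finitely generated over $U(\fk)$, which bounds $K$-multiplicities in cyclic modules and, combined with the finiteness of $K$-types of bounded highest weight carrying any given infinitesimal character, forces finite length. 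Finally $(2) \Rightarrow (1)$ is immediate, since irreducible $(\g, K)$-modules are admissible by Harish--Chandra and admissibility is preserved under extensions.

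It remains to incorporate $(4)$ into the cycle, and I expect this to be the main obstacle, since it bridges the $K$-$Z_G$ perspective with the $\fn$-perspective. For $(4) \Rightarrow (1)$ I would observe that finite generation over $U(\fn)$ forces the coinvariants $\pi_\fn$ to be finite-dimensional (the images of generators span it), and then standard Jacquet-module arguments using the Iwasawa-type decomposition $\g = \fk + \fp$ convert this finiteness into finite-dimensionality of each $K$-isotypic component. For the harder direction $(1) \Rightarrow (4)$, which is Osborne's theorem, the key input is Casselman's subrepresentation theorem: every irreducible HC module embeds in a principal series $\Ind_P^G \sigma$, which is visibly finitely generated over $U(\overline{\fn})$ by the Bruhat decomposition; since $\overline{\fn}$ is a finite-dimensional nilpotent Lie algebra, $U(\overline{\fn})$ is Noetherian, so submodules of finitely generated $U(\overline{\fn})$-modules stay finitely generated. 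A Cartan involution then transports the result from $\overline{\fn}$ to $\fn$, and the general case is handled by induction on the length established in $(2)$. The principal difficulty is precisely the deep invocation of Casselman's subrepresentation theorem, on which essentially all known proofs of this direction rely.
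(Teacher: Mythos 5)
Your logical structure is sound (the cycle $(1)\Rightarrow(3)\Rightarrow(2)\Rightarrow(1)$ together with $(1)\Leftrightarrow(4)$ does establish the equivalence), but the route differs from the paper's, and there is a genuine gap in the step you yourself flag as the main obstacle.

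The paper establishes the cycle $(1)\Rightarrow(2)\Rightarrow(3)\Rightarrow(4)\Rightarrow(1)$, and the crucial implication towards $(4)$ is $(3)\Rightarrow(4)$, proved algebraically via the Casselman--Osborne / Stafford--Wallach decomposition $U(\g)=U(\fn)\,E\,Z_G\,U(\fk)$ (Proposition \ref{prop:Osborne}, cited from \cite[\S\S3.7]{Wal1}): if $V$ is a finite-dimensional $K$- and $Z_G$-invariant generating subspace, then $\pi = U(\g)V = U(\fn)EV$, so $\pi$ is $\fn$-finitely generated with no appeal to any embedding theorem. You instead aim at $(1)\Rightarrow(4)$ via Casselman's subrepresentation theorem, which is Osborne's original and deeper route. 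That is a legitimate alternative strategy, but it does not reduce the problem as far as you suggest.

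The gap is in your assertion that $I_P^G(\sigma)$ is \emph{``visibly finitely generated over $U(\overline{\fn})$ by the Bruhat decomposition.''} Restriction to the open cell $\overline{N}P$ realizes the $K$-finite vectors as a space of $\sigma$-valued functions on $\overline{N}$, but these are \emph{not} polynomials (for the spherical principal series of $SL(2,\R)$ they restrict to rational functions like $\bigl(\tfrac{1+ix}{1-ix}\bigr)^n$), and even for the polynomial ring the translation action of $\overline{\fn}$ lowers degree, so $\C[\overline{N}]$ is not finitely generated over $U(\overline{\fn})\cong\C[\partial]$. There is no ``visible'' finite generation here; proving it for principal series already requires a nontrivial structural input, and the natural one is precisely the Casselman--Osborne decomposition above — at which point the subrepresentation theorem becomes superfluous and you are back to the paper's argument. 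Similarly, your sketch of $(4)\Rightarrow(1)$ compresses what is, in \cite[\S\S4.2]{Wal1}, a genuine Casselman--Jacquet-functor argument; finite-dimensionality of $\pi_{\fn}$ together with $\g=\fk+\fp$ does not immediately yield finite-dimensionality of $K$-isotypic components, because $K$ does not normalize $\fn$, so the filtration $\fn^k\pi$ is not $K$-stable. I would recommend replacing the subrepresentation-theorem route by the Stafford--Wallach argument for $(3)\Rightarrow(4)$, which is both more elementary and is what the paper actually cites.
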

The implications (\ref{Eqit:Adm}) $\Rightarrow$ (\ref{Eqit:FinLen}) and
(\ref{Eqit:NFinGen}) $\Rightarrow$ (\ref{Eqit:Adm})  are proven using the Casselman-Jacquet functor, see \cite[\S 4.2]{Wal1}. The implication
(\ref{Eqit:FinLen}) $\Rightarrow$ (\ref{Eqit:ZFin}) follows from Schur's Lemma, and the implication (\ref{Eqit:ZFin})  $\Rightarrow$  (\ref{Eqit:NFinGen}) is proven in \cite[ \S 3.7]{Wal1}
\begin{notation}
\RamiA{
For a real reductive group  $G$ we denote by $K$ its maximal compact subgroup, by $\g$ its complexified Lie algebra, and
%by $\cM(G)$ the category of (arbitrary) $G$-modules,
 by $\cM(\g)$ the category of $\g$-modules. Denote by $\cM_{\infty}(G)$ the category of smooth admissible \Fre representations of $G$ of moderate growth (see \cite[\S11.5]{Wal2} or \cite{CasGlob}), where admissible means that the space of $K$-finite vectors is an admissible $(\g,K)$-module. Denote by  $\cM_{HC}(G)$ the category of admissible $(\g,K)$-modules. Note that both $\cM_{\infty}(G)$ and $\cM_{HC}(G)$ are naturally subcategories of $\cM(\g)$. We denote by $HC:\cM_\infty(G) \to \cM_{HC}(G)$ the functor of $K$-finite vectors.}
\end{notation}

\begin{thm}[Casselman-Wallach, see \cite{Wal2}, \S 11.6.8] \label{thm:CW}
The functor $HC:\cM_{\infty}(G) \to \cM_{HC}(G)$ is an equivalence of categories.
\end{thm}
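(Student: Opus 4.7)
The plan is to establish the three properties of an equivalence of categories separately: faithfulness, essential surjectivity, and fullness of $HC$.

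Faithfulness is the soft part. In any $\pi \in \cM_{\infty}(G)$, the subspace $HC(\pi)$ is dense, because for every $v \in \pi$ and every neighborhood $U$ of $v$ one can choose a $K$-bi-finite smooth compactly supported function $f$ on $G$ so that $\pi(f)v \in U$, and $\pi(f)v$ is automatically $K$-finite and smooth. Hence any continuous $G$-equivariant map $\pi_1 \to \pi_2$ that vanishes on $HC(\pi_1)$ is identically zero, so $HC$ is faithful on morphisms.

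For essential surjectivity, given $V \in \cM_{HC}(G)$, I would first invoke Casselman's subrepresentation theorem to embed $V$ as a $(\g,K)$-submodule of $HC(I)$, where $I := \Ind_P^G(\sigma)^{\infty}$ is the smooth parabolic induction from some minimal parabolic $P$ of a suitable finite-dimensional representation $\sigma$ of the Levi. Since $I$ manifestly lies in $\cM_{\infty}(G)$, one can take $\pi := \overline{V} \subseteq I$, a closed $G$-invariant Fr\'echet subspace of moderate growth, and the task becomes to verify $HC(\pi) = V$. This uses admissibility: for each irreducible $K$-type $\tau$ the isotypic component $I(\tau)$ is finite-dimensional, hence closed, and therefore $\pi \cap I(\tau) = \overline{V \cap I(\tau)} = V(\tau)$. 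Taking the sum over all $\tau$ recovers $V$.

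The serious step is fullness: every morphism $\phi: HC(\pi_1) \to HC(\pi_2)$ of Harish-Chandra modules extends to a continuous $G$-equivariant map $\pi_1 \to \pi_2$. Passing to the graph of $\phi$ inside $\pi_1 \times \pi_2$ and invoking essential surjectivity on it, this reduces to the \emph{automatic continuity} statement: if $\pi \in \cM_{\infty}(G)$ satisfies $HC(\pi) = 0$ then $\pi = 0$. I expect this to be the principal obstacle, since it is precisely where categorical arguments cease to suffice and genuine real analysis on the group enters. The approach I would take is via the asymptotic theory of matrix coefficients along the split torus $A$: use the Casselman--Jacquet functor to produce convergent asymptotic expansions whose leading exponents and coefficients are read off from the $(\g,K)$-module structure of $HC(\pi)$, and conclude that $HC(\pi) = 0$ forces all matrix coefficients of $K$-finite pairings to vanish, hence (by density of $HC(\pi)$ and $HC(\tilde\pi)$ in $\pi$ and its smooth dual) $\pi = 0$. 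An alternative route, should the asymptotic approach prove technically unwieldy, is to compare with Schmid's minimal globalization and show that two different moderate-growth smooth globalizations of the same Harish-Chandra module must coincide, again using the uniqueness built into asymptotics at infinity.
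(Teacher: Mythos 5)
Your outline correctly isolates the three tasks (faithful, essentially surjective, full) and the arguments for the first two are sound in spirit: density of $HC(\pi)$ in $\pi$ gives faithfulness for free, and a globalization of a given $V\in\cM_{HC}(G)$ can indeed be manufactured by Casselman's embedding theorem followed by taking the closure inside the smooth principal series $I$, with admissibility guaranteeing $HC(\overline V)=V$ via the finite-dimensional (hence closed) $K$-isotypic pieces. (The paper itself does not prove this theorem but cites \cite{Wal2}; your route to essential surjectivity is the standard one.)

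The serious issue is in the fullness step. The statement you reduce to, namely \emph{if $\pi\in\cM_\infty(G)$ has $HC(\pi)=0$ then $\pi=0$}, is an immediate triviality: $HC(\pi)$ is dense in $\pi$. That assertion cannot possibly carry the analytic weight of the theorem. Tracing your graph construction shows where the gap actually lies. Put $\Gamma:=\overline{\Gamma_\phi}\subseteq\pi_1\times\pi_2$; one does get $HC(\Gamma)=\Gamma_\phi$ and the first projection $p_1\colon\Gamma\to\pi_1$ is injective (here the trivial fact about $HC$ reflecting zero is the only thing used). But to extend $\phi$ you must know that $p_1$ is \emph{surjective} and a topological isomorphism. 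Its image contains the dense subspace $HC(\pi_1)$, yet nothing so far forces the image to be closed: the closed-range property for morphisms in $\cM_\infty(G)$ (equivalently, that $\cM_\infty(G)$ is abelian, Corollary~\ref{cor:CW} in the paper) is itself a \emph{consequence} of the Casselman--Wallach theorem, so invoking it here would be circular. The correct reduction target is the closed-range/automatic-continuity theorem: every morphism $f$ in $\cM_\infty(G)$ with $HC(f)$ surjective is surjective with closed image. That is precisely the hard estimate-driven content (Wallach's asymptotic analysis of matrix coefficients, or the Bernstein--Kr\"otz approach in \cite{BerKr}), and your final paragraph gestures in the right direction but does not recover it from the trivial ``$HC$ reflects zero'' statement. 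As written, fullness is not established.
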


In fact, Casselman and Wallach construct an inverse functor $\Gamma: \cM_{HC}(G) \to \cM_{\infty}(G)$, that is called the Casselman-Wallach globalization functor, or the  Casselman-Wallach canonical completion (see \cite[Chapter 11]{Wal2} or \cite{CasGlob} or, for a different approach, \cite{BerKr}).
% and $\Gamma: \cM_{HC}(G) \to \cM_{\infty}(G)$ the (inverse) Casselman-Wallach globalization functor.

\begin{cor} \label{cor:CW}
$ $
\begin{enumerate}[(i)]
\item  \label{it:Ab} The category $\cM_{\infty}(G)$ is abelian.
\item \label{it:ClosIm} Any morphism in $\cM_{\infty}(G)$ has closed image.
\end{enumerate}
\end{cor}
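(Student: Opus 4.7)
Both parts will follow formally from Theorem \ref{thm:CW}, together with the elementary observation that $\cM_{HC}(G)$ is an abelian category. The plan is as follows.

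First I would verify that $\cM_{HC}(G)$ itself is abelian. A subobject or quotient of an admissible finitely generated $(\g,K)$-module is again admissible and finitely generated (by Theorem \ref{thm:EqAdm}, since finite length passes to subquotients), and kernels/cokernels of morphisms of $(\g,K)$-modules exist as $(\g,K)$-modules; so $\cM_{HC}(G)$ is closed under kernels and cokernels inside $\cM(\g)$ and the usual abelian axioms are inherited. Part (\ref{it:Ab}) then follows because an equivalence of categories transports the abelian structure: kernels, cokernels, and the identification of coimage with image in $\cM_{\infty}(G)$ are obtained by applying $\Gamma$ to the corresponding constructions in $\cM_{HC}(G)$.

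For part (\ref{it:ClosIm}), let $f:V\to W$ be a morphism in $\cM_{\infty}(G)$. Let $J:=\mathrm{Im}(HC(f))\subset HC(W)$, which is an object of $\cM_{HC}(G)$, and set $V':=\Gamma(J)$. Applying $\Gamma$ to the factorization $HC(V)\twoheadrightarrow J\hookrightarrow HC(W)$ and using Theorem~\ref{thm:CW}, I obtain a factorization $V\xrightarrow{p} V'\xrightarrow{i} W$ in $\cM_{\infty}(G)$, where $p$ is an epimorphism and $i$ is a monomorphism in the abelian category $\cM_{\infty}(G)$. It remains to identify $f(V)$ with $V'$ as a subspace of $W$. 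Since $V,V',W$ are Fr\'echet spaces and $p,i$ are continuous $G$-equivariant maps, and since $p$ is an epimorphism in $\cM_{\infty}(G)$ whose image is dense (its image contains $J$, and $K$-finite vectors are dense in any object of $\cM_{\infty}(G)$), the open mapping theorem applied to the induced continuous linear bijection $V/\Ker(p)\to V'$ shows that $p$ is surjective as a map of topological vector spaces. Consequently $f(V)=i(V')$.

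Finally, I would observe that $i:V'\hookrightarrow W$ is a \emph{closed} embedding: this is the standard fact that the Casselman--Wallach globalization functor sends inclusions of Harish-Chandra submodules to closed embeddings of Fr\'echet representations (it is built into the construction in \cite[Chapter 11]{Wal2}, and also follows from the uniqueness of moderate growth globalization applied to $V'$ and to the closure of $i(V')$ in $W$, which share the same space of $K$-finite vectors $J$). Combining this with $f(V)=i(V')$ yields that $f(V)$ is closed in $W$, proving (\ref{it:ClosIm}). The only non-formal input is the closed-embedding property of $\Gamma$, which I expect to be the main point to cite carefully from the Casselman--Wallach literature; everything else is formal from the equivalence and the open mapping theorem.
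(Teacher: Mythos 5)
Part (i) is fine and matches the paper. For part (ii), however, the step where you deduce surjectivity of $p$ is circular. You write that the open mapping theorem applied to ``the induced continuous linear bijection $V/\Ker(p)\to V'$'' shows $p$ is surjective — but bijectivity of that induced map is precisely the surjectivity you are trying to establish. What you actually know at that point is that $V/\Ker(p)\to V'$ is injective with dense image, and the open mapping theorem cannot upgrade ``injective with dense image'' to ``surjective'' (consider the inclusion of a proper dense subspace of a Fr\'echet space). The step can be repaired: since $HC$ sends the map $V/\Ker(p)\to V'$ to the identity of $J$, Theorem~\ref{thm:CW} forces it to be an isomorphism in $\cM_\infty(G)$, hence a bijection. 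Likewise your closed-embedding claim about $i$ is true but needs a real citation or the uniqueness-of-globalization argument you sketch at the end.

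It is worth comparing with the paper's proof, which sidesteps both issues. The paper sets $\pi'=\pi/\Ker\phi$ and $\tau'=\overline{\Im\phi}$, observes directly that the induced $\phi'\colon\pi'\to\tau'$ is a monomorphism (injectivity) and an epimorphism (dense image plus continuity) in $\cM_\infty(G)$, and then invokes part (i): in an abelian category, mono plus epi implies iso. Hence $\phi'$ is an isomorphism, in particular bijective, so $\Im\phi=\tau'=\overline{\Im\phi}$. This requires neither that $\Gamma$ carries surjections of Harish-Chandra modules to surjective continuous maps nor that $\Gamma$ carries inclusions to closed embeddings — the categorical mono-plus-epi-implies-iso principle does all the work and the open mapping theorem is not needed at all. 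Your route, once the gap above is filled, essentially re-proves those two structural properties of $\Gamma$ along the way; the paper's route avoids them.
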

\begin{proof}
(\ref{it:Ab}) The category $\cM_{HC}(G)$ is clearly  abelian and by the theorem it is equivalent to $\cM_{\infty}(G)$.

(\ref{it:ClosIm}) Let $\phi:\pi \to \tau$ be a morphism in $\cM_{\infty}(G)$.
Let $\tau' = \overline{\Im {\phi}}$, $\pi' = {\pi/\ker {\phi}}$ and $\phi':\pi' \to \tau'$ be the natural morphism. Clearly $\phi'$ is monomorphic and epimorphic in the category $\cM_{\infty}(G)$. Thus by (\ref{it:Ab}) it is an isomorphism. On the other hand, $\Im \phi' = \Im \phi \subset \overline{\Im \phi} = \tau'$. Thus $\Im {\phi} = \overline{\Im \phi}$.
\end{proof}

%\begin{thm}[Casselman subrepresentation theorem, see \cite{CM}, Proposition 8.23]\label{thm:CasSubRep}
%Let $\pi$ be a finitely generated admissible $(\g,K)$-module and $P$ be a minimal parabolic subgroup of $G$. Then there exists a
%\RamiA{finite-dimensional}
%%n irreducible finite-dimensional %smooth
%representation $\sigma$ of $P$ such that $\pi$ may be imbedded into $Ind_P^G(\sigma)$.
%\end{thm}

\subsection{The annihilator variety and associated partition} \label{subsubsec:AnnVar}

For an associative algebra $A$ the annihilator of a module $\left(
\tau,W\right)  $ is
\[
Ann(\tau)=\left\{  a\in A:\tau\left(  a\right)  w=0\text{ for all }w\in
W\right\}.
\]
If $A$ is abelian then the support of $\tau$ is defined to be the variety corresponding to the ideal $Ann(\tau)$, i.e. $\text{Zeroes}\left(  Ann(\tau)\right)  $.

If $\left(  \tau,W\right)  $ is a module for a Lie algebra $\mathfrak{g}$,
then one can apply the above considerations to the enveloping algebra
$U\left(  \mathfrak{g}\right)  $. While $U\left(  \mathfrak{g}\right)  $ is
not abelian it admits a natural filtration $U^n$ such that $gr\left(  U\left(
\mathfrak{g}\right)  \right)  $ is the symmetric algebra $\Sym\left(
\mathfrak{g}\right)  ,$ and \RamiA{hence one has a symbol} map %$gr$
\Dima{$\sigma$}
from $U\left(
\mathfrak{g}\right)  $ to $\Sym\left(  \mathfrak{g}\right)  $. We let $gr\left(
Ann(\tau)\right)  $ be the ideal in $\Sym\left(  \mathfrak{g}\right)  $
generated by the symbols $\left\{  \sigma\left(  a\right)  \mid a\in Ann\left(
\tau\right)  \right\}  $ and define the annihilator variety of $\tau$ to
be
\[
\mathcal{V}\left(  \tau\right)  =\text{Zeroes}\left(  gr\left(  Ann(\tau)\right)
\right)  \subset\mathfrak{g}^{\ast}%
\]

If $\mathfrak{g}$ is a complex reductive Lie algebra and $M$ is an irreducible $\mathfrak{g}$-module, then it was shown
by Borho-Brylinski (see \cite{BB1}) and Joseph (see \cite{Jos85}) that $\mathcal{V}(M)$ is
the closure $\overline{{\mathcal{O}}}$ of a single nilpotent coadjoint orbit
${\mathcal{O}}$, that we call the associated orbit of $M$.
If $G$ is a reductive group and $\pi \in \cM_{\infty}(G)$ is admissible  then $\pi^{HC}$ is dense in $\pi$ and since the action of $\g$ on $\pi$ is continuous we get $\cV(\pi) = \cV(\pi^{HC})$.

%If $\pi$ is a Hilbert space representation of a Lie group $G$ then we define $\cV(\pi):=\cV(\pi^{\omega})$.

\subsubsection{The case of $G_n$}
\Dima{Suppose first that $F=\R$. Then} $\g_n=\gl(n,\bC)$ and \RamiA{ we identify $\g_n$ and $\g_n^*$ with the space $n\times n$ complex matrices, in the usual manner}. By Jordan's theorem, nilpotent orbits in $\g_n^*$ are given by partitions of $n$, i.e. tuples $(n_1 , \dots ,  n_k)$ such that $n_1 \geq \cdots \geq n_k$ and $n_1 + \cdots + n_k =n$ (see \cite[Proposition 3.1.7]{CoMG}).
For a partition $\lambda$ we denote by $\cO_{\lambda}$ the corresponding nilpotent coadjoint orbit. We sometimes use exponential notation for partitions; thus $4^2 2^1 1^3$ denotes $(4, 4, 2, 1, 1, 1)$.

If $\pi \in \cM_{\infty}(G_n)$ is irreducible and $\lambda$ is the partition of $n$ such that $\cV(\pi)=\overline{\cO_{\lambda}}$ we call $\lambda$ the \emph{associated partition of} $\pi$ and denote $\lambda=AP(\pi)$.
For example, if $\pi$ is finite-dimensional then $\cV(\pi)=\{0\}$ and $AP(\pi)= 1^n$ and if $\pi$ is generic then, by \cite{Kos},
$\cV(\pi)$ is the nilpotent cone of $\g_n^*$ and $AP(\pi)=n^1$.

Let us introduce the following definition of depth.

\begin{defn}
Let $\tau \in \cM(g_n)$. Define $depth(\tau)$ to be the smallest number $d$ such that $A^d=0$ for any \RamiA{$A \in \cV(\tau)$}. Denote by
$\cM_{HC}^d(G_n)$ and $\cM_{\infty}^d(G_n)$ the subcategories of $\cM_{HC}(G_n)$ and $\cM_{\infty}(G_n)$ consisting of representations of depth at most $d$.
\end{defn}

It is easy to see that for an irreducible representation $\pi$ with associated partition $(n_1,..,n_k), \, depth(\pi)=n_1$ and the depth of an extension of two representations is the maximum of their depths.

Let us now consider the case $F=\C$. Then $\g_n=\gl(n,\bC)\oplus \gl(n,\bC)$ and coadjoint nilpotent orbits are given by pairs of partitions. However, if $\pi \in \cM_{\infty}(G_n)$ is irreducible then the maximal orbit in $\cV(\pi)$ is symmetric and thus corresponds to a single partition that we call the associated partition. For any $\pi \in \cM_{HC}(GL_n(\C))$ we define $depth(\pi)$ to be the smallest number $d$ such that $A^d=B^d=0$ for any \RamiA{$(A,B) \in \cV(\pi)$}.

\subsection{Parabolic induction and Bernstein-Zelevinsky product} \label{subsec:ParInd}

Let $G$ be a real reductive group, $P$ be a parabolic subgroup, $M$ be the Levi quotient of $P$ and $pr:P \to M$ denote the natural map.

\begin{notn}
$ $
\begin{itemize}
\item For a Lie group $H$ we denote by $\Delta_H$ the modulus character of $H$, i.e. the absolute value of the determinant of the infinitesimal adjoint action.
\item For $\pi \in \cM_{\infty}(M)$ we denote by $I_P^G(\pi)$ the normalized parabolic induction of $\pi$, i.e. the space of smooth functions $f :G \to \pi$ such that $f(pg) = \Delta_{P}(p)^{1/2} \pi(pr(p))f(g)$, with the action of $G$ given by $(I_P^G(\pi)(g)f)(x):=f(xg)$.
\end{itemize}
\end{notn}
%\Dima{we induce on the LEFT!!}

The behavior of the annihilator variety under parabolic induction is described by the following theorem.

\begin{thm}\label{thm:AnnVarProd}
Note that we have a natural embedding $\fm^* \hookrightarrow \fp^*$ and a natural projection $r:\g^* \to \fp^*$. Let $\pi \in \cM_{\infty}(M)$. Then $\cV(I_P^G(\pi)) = G_{\C}\cdot r^{-1}(\cV(\pi))$, where $G_{\C}$ is the complexification of $G$.
\end{thm}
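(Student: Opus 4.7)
The plan is to reduce to the Harish-Chandra setting and compute $\cV(\mathcal{I})$ for the algebraic induction $\mathcal{I}:=U(\fg)\otimes_{U(\fp)}W$ via a PBW-type associated graded. Since $\pi^{HC}$ is dense in $\pi$, the annihilators in $U(\fg)$ of $\pi$ and $\pi^{HC}$ coincide, so $\cV(\pi)=\cV(\pi^{HC})$; likewise $\cV(I_M^G(\pi))=\cV\bigl(HC(I_M^G(\pi))\bigr)$, and by Casselman--Wallach the latter HC module is the algebraic parabolic induction of $\pi^{HC}$ twisted by the (invertible) normalizing character, which does not affect the annihilator variety. Setting $W:=\pi^{HC}$, viewed as a $(\fp,K\cap P)$-module on which $\fn$ acts trivially, the theorem reduces to computing $\cV(\mathcal{I})$.

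Pick an $\fm$-stable good filtration $W^{\bullet}$ on $W$ and combine it with the PBW filtration on $U(\fg)$, using the Levi decomposition $\fg=\bar{\fn}\oplus\fm\oplus\fn$, to obtain a good filtration on $\mathcal{I}$. The associated graded is the $\Sym(\fg)$-module
\[
\mathrm{gr}\,\mathcal{I}\;\cong\;\Sym(\fg)\otimes_{\Sym(\fp)}\mathrm{gr}\,W\;\cong\;\Sym(\bar{\fn})\otimes_{\C}\mathrm{gr}\,W,
\]
with $\Sym(\bar{\fn})$ acting by multiplication on the first factor, $\Sym(\fm)$ acting on $\mathrm{gr}\,W$ through the quotient $\Sym(\fp)\twoheadrightarrow\Sym(\fm)$, and $\Sym(\fn)$ annihilating the module (since $\fn$ kills $W$). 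Its support in $\fg^{*}=\bar{\fn}^{*}\oplus\fm^{*}\oplus\fn^{*}$ is therefore $\bar{\fn}^{*}\times\mathrm{Supp}(\mathrm{gr}\,W)\times\{0\}$, which under the identifications $\fm^{*}\hookrightarrow\fp^{*}$ and $r:\fg^{*}\to\fp^{*}$ equals $r^{-1}(\mathrm{AV}(W))$, where $\mathrm{AV}$ denotes the associated variety. This yields $\mathrm{AV}(\mathcal{I})=r^{-1}(\mathrm{AV}(W))$.

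To pass from associated to annihilator varieties I invoke the theorem of Borho--Brylinski \cite{BB1} and Joseph \cite{Jos85}: for any finitely generated $U(\fg)$-module $X$ with a good filtration, $\cV(X)=G_{\C}\cdot\mathrm{AV}(X)$. Applied on both sides, this gives $\cV(\mathcal{I})=G_{\C}\cdot r^{-1}(\mathrm{AV}(W))$ and $\cV(W)=M_{\C}\cdot\mathrm{AV}(W)$. Since the adjoint action of $M_{\C}$ preserves the Levi decomposition, $r$ is $M_{\C}$-equivariant, so $r^{-1}(M_{\C}\cdot\mathrm{AV}(W))=M_{\C}\cdot r^{-1}(\mathrm{AV}(W))$; saturating by $G_{\C}\supset M_{\C}$ then yields
\[
G_{\C}\cdot r^{-1}(\cV(W))\;=\;G_{\C}\cdot r^{-1}(\mathrm{AV}(W))\;=\;\cV(\mathcal{I}),
\]
as claimed.

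The main obstacle is invoking the identity $\cV(X)=G_{\C}\cdot\mathrm{AV}(X)$, which ultimately rests on Gabber's involutivity theorem and the characteristic variety machinery; since $\mathcal{I}$ is finitely generated but not of finite length as a $U(\fg)$-module, some care is needed in the application. A secondary, purely technical point is verifying that the combined PBW/$W^{\bullet}$ filtration is genuinely a good filtration on $\mathcal{I}$, and that the normalizing $\delta$-twist in the definition of $I_M^G$ does not affect $\cV$ (automatic since $\delta$ is a character, hence shifts the HC module by an invertible twist).
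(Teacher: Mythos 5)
The paper itself does not give a proof: it simply observes that the theorem ``can be deduced from \cite[Theorem 2]{BB2}.'' Your attempt at a direct proof is a reasonable idea, and the associated-graded computation of $\mathcal{I}=U(\fg)\otimes_{U(\fp)}W$ is correct, but there are two genuine gaps.

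First, the reduction step is wrong as stated. You identify $HC(I_M^G(\pi))$ with the algebraic induction $U(\fg)\otimes_{U(\fp)}\pi^{HC}$ (up to a normalizing twist). That identification does not hold. The $K$-finite vectors of the smooth parabolic induction form the \emph{pro}-module $\operatorname{Hom}_{U(\fp)}(U(\fg),\pi^{HC})_{K\text{-fin}}$, the right adjoint to restriction; the algebraic induction $U(\fg)\otimes_{U(\fp)}\pi^{HC}$ is the left adjoint and is typically not admissible and not of finite $K$-type, so they cannot be isomorphic. The two are related by a duality, not an isomorphism. What one actually has is a nondegenerate invariant pairing between (the analytic vectors of) $I_M^G(\pi)$ and a quotient of the ind-module --- this is exactly Lemma \ref{lem:IndPairing} in the paper --- and from a nondegenerate pairing one can extract equality of annihilators; but that chain of reasoning needs to be written out, and the way you have phrased it (as an isomorphism via Casselman--Wallach) is incorrect.

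Second, the passage from associated variety to annihilator variety is not available in the form you invoke. The result you cite from Borho--Brylinski \cite{BB1} and Joseph \cite{Jos85}, as recalled in \S\S\ref{subsubsec:AnnVar}, says that $\cV(M)$ is the closure of a single nilpotent orbit when $M$ is \emph{irreducible}; it does not assert $\cV(X)=G_{\C}\cdot AV(X)$ for arbitrary finitely generated $U(\fg)$-modules $X$. The general inclusion $G_{\C}\cdot AV(X)\subset\cV(X)$ is automatic (the annihilator is $\operatorname{ad}$-stable and contains the annihilator of the graded module), but the reverse inclusion is a theorem that holds under restrictive hypotheses (e.g. Harish-Chandra modules, or modules with primitive annihilator), and $\mathcal{I}$ is neither admissible nor of finite length, as you yourself flag. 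So the point you marked as ``some care needed'' is in fact the crux, and it is not resolved. Either you should derive the desired bound on $\cV(\mathcal{I})$ directly from the ideal generated by $\operatorname{gr}\operatorname{Ann}(\pi)$ and $\operatorname{gr}(U(\fg)\fn)$ (which avoids the $AV$-to-$\cV$ comparison for $\mathcal{I}$), or you should fall back, as the paper does, on the result of Borho--Brylinski \cite[Theorem 2]{BB2}, which is formulated precisely for parabolically induced modules.
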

This theorem is well-known and can be deduced from \cite[Theorem 2]{BB2}.
%\Dima{ There is a problem. In \cite{BB2} $G$ is probably the adjoint group of $\g$. Is it the same as our $G_\C$?}

\subsubsection{Bernstein-Zelevinsky product}

We now introduce the Bernstein-Zelevinsky product notation for parabolic induction.

\begin{definition}\label{def:BZProd}
If $\alpha=(n_{1} , \dots ,    n_{k})$ is a composition of $n$ and
$\pi_{i}\in\cM_{\infty}(G_{n_{i}})$ then $\pi_{1}\otimes\cdots\otimes\pi_{k}$ is
a representation of $L_{\alpha}\approx G_{{\alpha}_{1}
}\times\cdots\times G_{{\alpha}_{k}}$. We define
\[
\pi_{1}\times\cdots\times\pi_{k}=I_{P_{\alpha}}^{G_{n}}\left(  \pi_{1}\otimes\cdots\otimes\pi_{k}\right)
\]
\end{definition}

\RamiA{ $\pi_{1}\times\cdots\times\pi_{k}$ will be referred to below as the Bernstein-Zelevinsky product, or the BZ-product, or sometimes just the product of $\pi_{1},\dots,\pi_{k}$. It is well known (see e.g. \cite[\S 12.1]{Wal2}) that the product is commutative in the Grothendieck group.}
From Theorem \ref{thm:AnnVarProd} we obtain

\begin{cor}\label{cor:DepthProd}
Let $\pi_1 \in G_{n_1}$ and $\pi_2 \in G_{n_2}$. Then $depth(\pi_1 \times \pi_2) = depth(\pi_1)+depth(\pi_2)$.
\end{cor}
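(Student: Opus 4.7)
The plan is to reduce the statement to an explicit computation of nilpotency indices on the variety produced by Theorem \ref{thm:AnnVarProd}. Let $P = P_{(n_1,n_2)} \subset G_n$ with Levi $M = G_{n_1} \times G_{n_2}$ and write $d_i := \mathrm{depth}(\pi_i)$. Using the standard trace identification $\g_n^* \cong \g_n$, the decomposition $\g_n = \fm \oplus \fu \oplus \bar\fu$ into block-diagonal, strict block-upper and strict block-lower parts pairs $\fu$ with $\bar\fu$, so $\fp^* \cong \fm \oplus \bar\fu$ and the restriction $r\colon \g_n^* \to \fp^*$ just kills the $\fu$-component. Since $\cV(\pi_1\otimes \pi_2) = \cV(\pi_1)\times \cV(\pi_2) \subset \fm^*$ sits inside $\fp^*$ with trivial $\bar\fu$-part, the preimage $r^{-1}(\cV(\pi_1)\times \cV(\pi_2))$ is exactly the set of block-upper triangular matrices $X = \left(\begin{smallmatrix} A & B \\ 0 & D \end{smallmatrix}\right)$ with $A\in \cV(\pi_1)$, $D \in \cV(\pi_2)$, and $B$ arbitrary. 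By Theorem \ref{thm:AnnVarProd}, $\cV(\pi_1\times \pi_2)$ is the $G_\C$-conjugation saturation of this set.

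For the upper bound, a direct block computation gives
$$X^{d_1+d_2} = \begin{pmatrix} A^{d_1+d_2} & \sum_{i=0}^{d_1+d_2-1} A^i B D^{d_1+d_2-1-i} \\ 0 & D^{d_1+d_2} \end{pmatrix}.$$
The diagonal blocks vanish by the hypothesis $A^{d_1}=0=D^{d_2}$. For each term in the upper-right sum, either $i\geq d_1$ (so $A^i = 0$) or $i < d_1$, in which case the exponent $d_1+d_2-1-i \geq d_2$ forces $D^{d_1+d_2-1-i}=0$. Hence $X^{d_1+d_2}=0$ for every $X$ in $r^{-1}(\cV(\pi_1)\times\cV(\pi_2))$, and since conjugation preserves the nilpotency index, the same holds on the $G_\C$-saturation. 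Therefore $\mathrm{depth}(\pi_1\times\pi_2) \leq d_1+d_2$.

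For the lower bound I would exhibit a single matrix in $\cV(\pi_1\times\pi_2)$ with a Jordan block of size $d_1+d_2$. By definition of depth, there exist $A\in \cV(\pi_1)$ and $D\in \cV(\pi_2)$ whose largest Jordan blocks have sizes exactly $d_1$ and $d_2$; using $G_\C$-conjugation we may arrange these to be standard nilpotent Jordan blocks placed in the upper-left corners of the two diagonal blocks. Then choosing $B$ to have a single $1$ in the entry linking the bottom of the first Jordan chain to the top of the second produces an $X$ in $r^{-1}(\cV(\pi_1)\times \cV(\pi_2))$ whose Jordan form contains a block of size $d_1+d_2$, so $X^{d_1+d_2-1}\neq 0$. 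This forces $\mathrm{depth}(\pi_1\times\pi_2) \geq d_1+d_2$.

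The only delicate step is the bookkeeping in the gluing construction, but it is entirely elementary. For $F=\C$ the complexified Lie algebra splits as two copies of $\gl_n(\C)$ and the analysis applies in each factor separately, since both the description of $\cV$ via Theorem \ref{thm:AnnVarProd} and the condition $A^d=B^d=0$ defining depth decompose along the two factors. Thus the argument yields $\mathrm{depth}(\pi_1\times\pi_2) = d_1+d_2$ in both archimedean cases.
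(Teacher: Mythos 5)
Your proof is correct and is exactly the deduction the paper leaves implicit ("From Theorem \ref{thm:AnnVarProd} we obtain..."): identify $r^{-1}(\cV(\pi_1)\times\cV(\pi_2))$ with block-upper-triangular matrices having diagonal blocks in $\cV(\pi_1)$ and $\cV(\pi_2)$, observe that conjugation preserves the nilpotency index, and bound that index on the preimage directly (upper bound by the block formula for $X^{d_1+d_2}$, lower bound by gluing two extremal Jordan blocks via a single entry of $B$). One cosmetic remark: with upper-triangular Jordan blocks and $B$ mapping the second block into the first, the gluing entry must send the \emph{annihilated} vector of the $D$-chain to the \emph{top} vector of the $A$-chain (i.e.\ $Bf_1 = e_{d_1}$), so the phrase ``bottom of the first chain to the top of the second'' should be reversed; the choice of entry, and hence the conclusion, is unaffected.
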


\section{Main results}\label{sec:Main}
\setcounter{lemma}{0}

\begin{notation}\label{not:main}
$ $
\begin{itemize}
\item Fix $F$ to be either $\R$ or $\C$.
\item $G_n :=GL(n,F)$, we embed $G_n \subset G_m$ for any $m>n$ by sending any $g$ into a block-diagonal matrix consisting of $g$ and $Id_{m-n}$. We denote the union of all $G_n$ by $G_{\infty}$ and all the groups we will consider will be embedded into $G_{\infty}$ in a standard way.
\item We denote by $P_{n}\subset G_{n}$ the mirabolic subgroup (consisting of matrices with last row $(0 , \dots ,  0,1)$).
\item Let $V_n \subset P_n$ be the unipotent radical. Note that $V_n \cong F^{n-1}$ and $P_n = G_{n-1} \ltimes V_n$. Let $U_n^k := V_{n-k+1} V_{n-k+2} \cdots V_{n}$ and $S_n^k := G_{n-k} U_n^k$. Note that $U_n^k$ is the unipotent radical of $S_n^k$. Let $N_n:=U_n^n$.
\item Fix a non-trivial unitary additive character $\theta$ of $F$, given by $\theta(x)=\exp(\sqrt{-1}\pi \re x)$.
\item Let $\bar{\psi}_n^k:U_n^k \to F$ be the standard non-degenerate homomorphism\Dima{, given by $\bar{\psi}_n^k(u)=\sum_{j=n-k}^{n-1} u_{j,j+1}$} and let $\psi_n^k:=\theta \circ \bar{\psi}_n^k$.%character of $U_n^k$.
\end{itemize}

We will usually omit the $n$ from the notations $U_n^k$ and $S_n^k$, and both indexes from $\psi_n^k$.
\end{notation}

\begin{defn}\label{def:main}
\DimaA{Define functors $\Phi: \cM(\fp_n) \to \cM(\fp_{n-1})$ by $\Phi(\pi):=\pi_{\fv_n,\psi} \otimes |\det|^{-1/2}$ and $\Psi,\Psi_0: \cM(\fp_n) \to \cM(\fg_{n-1})$ by $\Psi(\pi):= \pi_{gen,\fv_{n}}$ and $\Psi_0(\pi):= \pi_{\fv_{n}}$. }

For a $\p_n$-module $\pi$ we define three notions of derivative:
\DimaA{
\begin{enumerate}
\item $E^k(\pi):=\Phi^{k-1}(\pi):=\pi_{\fu^{k-1},\psi^{k-1}}\otimes |\det|^{-(k-1)/2}.$ Clearly it has a structure of a $\p_{n-k+1}$ - representation.
%Here $\p_{n-k+1}$ is the Lie algebra of the ${n-k+1}$-mirabolic.

\item $D^k(\pi):= \Psi(E^k(\pi)).$

\item $B^k(\pi):= \Psi_0(E^k(\pi)).$
\end{enumerate}
}

Note that the derivative functor $D^k$ was defined in the introduction.
For convenience we will also use untwisted versions of the above functors, defined by \Dima{$\oPhi(\pi):=\Phi(\pi) \otimes |\det|^{1/2}$, and $\oE^k(\pi):=E^k(\pi) \otimes |\det|^{(k-1)/2}$.}

We denote the restrictions of the above functors to the subcategory $\cM_{\infty}(G_n)$ by $B_{\infty}^k$, $D_{\infty}^k$, and $E_{\infty}^k$. Similarly, we denote the restrictions to $\cM_{HC}(G_n)$ by $B_{HC}^k$, $D_{HC}^k$ and $E_{HC}^k$. Note that if $\pi \in \cM_{\infty}(G_n)$ then $D_{\infty}^k(\pi)$ has a natural structure of a $P_{n-k+1}$ topological representation and  if $\pi \in \cM_{HC}(G_n)$ the $D_{HC}^k(\pi)$ has a natural structure of a $K'$ representation where $K'$ is the maximal compact subgroup of $G_{n-k}$. The same is true for the functors $B$ and $E$.

%We  will sometimes identify those categories and so we can compare those functors.
We have natural maps: $E^k \to D^k \to B^k$,
$ HC \circ B^k_{\infty} \to B^k_{HC} \circ HC$, $HC \circ D^k_{\infty} \to D^k_{HC}\circ HC$ and $HC \circ E^k_{\infty} \to E^k_{HC}\circ HC$. Here $HC$ is the functor of taking $K-$ finite vectors and the last three maps are maps of $K$ representations and $\p$ representations.
 %Dima just erased this sentence since it was not clear what it means However note that the  objects  are not necessary $(\g, K)$-modules since they do not need to be Hausdorff. }
\end{defn}

\begin{prop}\label{prop:D3Adm}
Let $\pi \in \cM_{HC}(G_n)$. Then $B_{HC}^k(\pi)$ is admissible for any $1\leq k \leq n$.
\end{prop}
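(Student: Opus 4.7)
My plan is to reduce the proposition, by unfolding the functor $B_{HC}^k$, to a statement about twisted coinvariants, and then to prove admissibility via a PBW argument combined with Theorem \ref{thm:EqAdm}.

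First, I would identify $B_{HC}^k(\pi)$ explicitly. Using $\fu^k = \fv_{n-k+1} \oplus \fu^{k-1}$ in which $\fu^{k-1}$ is a Lie ideal, and the fact that $|\det|^{-(k-1)/2}$ vanishes on $\fv_{n-k+1}$, one has
\[
B_{HC}^k(\pi) \;=\; \Psi_0(E^k(\pi)) \;=\; (\pi_{\fu^{k-1},\psi^{k-1}})_{\fv_{n-k+1}} \otimes |\det|^{-(k-1)/2} \;\cong\; \pi_{\fu^k,\tilde{\psi}^k} \otimes |\det|^{-(k-1)/2},
\]
where $\tilde{\psi}^k$ is the extension of $\psi^{k-1}$ to $\fu^k$ by zero on $\fv_{n-k+1}$. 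A direct matrix-entry computation shows that $\tilde{\psi}^k$ is a well-defined Lie-algebra character of $\fu^k$ (the key identity being $\psi^{k-1}([\fv_{n-k+1},\fu^{k-1}]) = 0$), and that $\g_{n-k}$ normalizes $\fu^k$ while preserving $\tilde{\psi}^k$ (i.e.\ $\tilde{\psi}^k([\g_{n-k},\fu^k]) = 0$). Consequently $\pi_{\fu^k,\tilde{\psi}^k}$ inherits a natural $(\g_{n-k},K')$-module structure, and since the twist by $|\det|^{-(k-1)/2}$ is a character of $G_{n-k}$ it preserves admissibility, so it suffices to prove that $\pi_{\fu^k,\tilde{\psi}^k}$ is admissible.

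Next, I would invoke Theorem \ref{thm:EqAdm} at both ends. Applied to $\pi \in \cM_{HC}(G_n)$, it produces finitely many vectors $v_1,\ldots,v_m$ generating $\pi$ as a $\n_n$-module; applied in the converse direction, it reduces the admissibility of $\pi_{\fu^k,\tilde{\psi}^k}$ to finite generation over $\n_{n-k}$. The key structural observation is that $\n_n = \n_{n-k} \oplus \fu^k$ as vector spaces, with $\fu^k$ a Lie ideal of $\n_n$ (since $\n_{n-k}$ sits inside the Levi of the parabolic $S_n^k$ and hence normalizes its unipotent radical $\fu^k$; this is also a direct matrix-entry check). By PBW applied to this semidirect decomposition, $U(\n_n) = U(\n_{n-k}) \cdot U(\fu^k)$ as vector spaces, so
\[
\pi = U(\n_{n-k}) \cdot U(\fu^k) \cdot \Span\{v_1,\ldots,v_m\}.
\]
In the quotient $\pi_{\fu^k,\tilde{\psi}^k}$, each $x \in \fu^k$ acts by the scalar $\tilde{\psi}^k(x)$, and inductively each monomial in $U(\fu^k)$ acts by the corresponding product of scalars; thus $U(\fu^k) \cdot \Span\{v_1,\ldots,v_m\}$ collapses to $\Span\{\bar v_1,\ldots,\bar v_m\}$ in the quotient, and so $\pi_{\fu^k,\tilde{\psi}^k}$ is generated as an $\n_{n-k}$-module by the finite-dimensional image $\Span\{\bar v_1,\ldots,\bar v_m\}$. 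Theorem \ref{thm:EqAdm} then yields admissibility.

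I do not expect a substantial obstacle here: the argument is essentially formal once the identification $B_{HC}^k(\pi) \cong \pi_{\fu^k,\tilde{\psi}^k} \otimes |\det|^{-(k-1)/2}$ and the ideal structure of $\fu^k$ in $\n_n$ are in place. The step requiring the most care is verifying the matrix-entry identities ensuring $\tilde{\psi}^k$ is a $\g_{n-k}$-invariant character of $\fu^k$; this needs attention because $\tilde{\psi}^k$ differs from the natural non-degenerate character $\psi^k$ of $\fu^k$ on the $\fv_{n-k+1}$-component. It is precisely the use of ordinary rather than generalized coinvariants in the definition of $B^k$ that allows this direct PBW argument to work, bypassing the more delicate annihilator/associated variety analysis required for the admissibility of the depth derivative $D^d_{HC}$.
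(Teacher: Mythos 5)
Your proof is correct and takes essentially the same approach as the paper: the paper's two-line argument asserts that $B^k_{HC}$ quotients by the last $k$ columns of $\n_n$ with an appropriate character and hence preserves finite generation over the nilradical, and your PBW decomposition $U(\n_n)=U(\n_{n-k})\cdot U(\fu^k)$ together with the identification $B^k_{HC}(\pi)\cong\pi_{\fu^k,\tilde\psi^k}\otimes|\det|^{-(k-1)/2}$ is precisely the fleshed-out version of that assertion.
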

\begin{proof}
By Theorem \ref{thm:EqAdm}, $\pi$ is finitely generated over $\n_n$.
Note that the functor $B_{HC}^k$ quotients by the last $k$ columns of $\n_n$ (with an appropriate character) and thus $B_{HC}^k(\pi)$ is finitely generated over $\n_{n-k}$.
 Therefore, by Theorem \ref{thm:EqAdm} again, $B_{HC}^k(\pi)$ is admissible.
\end{proof}

\begin{remark}
%Let  $\pi \in \cM_{HC}(G_n)$, let
%$S=\left\{  z_{1},\ldots,z_{n}\right\}  $ and $S^{\prime}=\left\{
%y_{1},\ldots,y_{n-d}\right\}  $ be the multisets corresponding to
%infinitesimal characters of $\pi$ and $A\pi$ respectively. Then $S^{\prime}$is
%obtained from $S$ by deleting $d$ of the\ $z_{i}$'s and adding $1/2$ to
%each of the remaining $z_{i}$'s.
Suppose $F=\R$. Then the center of $\U(\g_n)$ is isomorphic to the algebra of symmetric polynomials in $n$-invariants, and its characters are given by multisets of size $n$ in $\C$.  Let  $\pi \in \cM_{HC}(G_n)$, and let $S^{\prime}$ be the multiset
corresponding to an infinitesimal character of $B^k(\pi)$.
 Then $S'$ is obtained from
the multiset corresponding to some infinitesimal character of $\pi$ by deleting $k$ of the elements and adding $1/2$ to
each of the remaining ones. This is proven by the argument in the proof of \cite[Proposition 4.5.4]{GS}. A similar statement holds for $F=\C$.
\end{remark}

In \RamiA{\S  \ref{sec:Adm}} we prove the following theorem
\begin{theorem}\label{thm:HCAdm}
Let $\pi \in \cM_{HC}^d(G_n)$. Then the restriction of $E_{HC}^d(\pi)$ to $\g_{n-d}$ is admissible.
\end{theorem}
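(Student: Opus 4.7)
The plan is to reduce the admissibility of $E_{HC}^d(\pi)|_{\g_{n-d}}$ to a statement about associated varieties, using the characterization of admissibility from Theorem \ref{thm:EqAdm}: a finitely generated $(\g_{n-d},K_{n-d})$-module is admissible if and only if it is finitely generated as an $\fn_{n-d}$-module.

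First, I would note that since $\pi$ is finitely generated over $\fn_n$ by Theorem \ref{thm:EqAdm}, and $\fn_n$ decomposes as $\fn_n = \fn_{n-d+1}\oplus \fu^{d-1}$ as a vector space, the quotient $E^d(\pi) = \pi_{\fu^{d-1},\psi^{d-1}}\otimes|\det|^{-(d-1)/2}$ is automatically finitely generated as an $\fn_{n-d+1}$-module. As a $\g_{n-d}$-module, this already yields finite generation over $\fn_{n-d}+\fv_{n-d+1}$; the task is to show that the extra direction $\fv_{n-d+1}$ is not needed, i.e.\ that $\fv_{n-d+1}$ acts on $E^d(\pi)$ in a sufficiently controlled way. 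Equivalently, choosing a good filtration on $E^d(\pi)$ as an $\fn_{n-d+1}$-module, it suffices to show that the associated variety $AV(E^d(\pi))\subseteq \fn_{n-d+1}^*$ is contained in $\fn_{n-d}^*$ (the annihilator of $\fv_{n-d+1}$).

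The heart of the argument will be the key lemma of \S\S\ref{subsec:keylem}: given a chain $\fh_1 \vartriangleleft \fh_2 < \fg$ with a character $\psi$ of $\fh_1$ stabilized by $\fh_2$, the associated variety of the co-equivariant module $\pi_{\fh_1,\psi}$ in $(\fh_2/\fh_1)^*$ is controlled by the annihilator variety $\cV(\pi)\subseteq \fg^*$. I will apply this with $\fg=\fg_n$, $\fh_1 = \fu^{d-1}$, $\psi = \psi^{d-1}$, and $\fh_2 = \fp_{n-d+1}$ (noting that $\fp_{n-d+1}$ stabilizes $\psi^{d-1}$). The hypothesis $depth(\pi)\leq d$ means by definition that every $A\in\cV(\pi)$ satisfies $A^d=0$. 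A direct matrix computation then shows that the image, in $(\fh_2/\fh_1)^*$, of the subvariety cut out by the conditions coming from the key lemma together with $A^d=0$ must kill the $\fv_{n-d+1}$-direction — this is where the numerical coincidence between the depth $d$ and the number of column co-equivariants $d-1$ leaves exactly no room for a non-zero $\fv_{n-d+1}^*$-component.

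Combining these ingredients, $AV(E^d(\pi))\subseteq \fn_{n-d}^*$, so $E^d(\pi)$ is finitely generated over $\fn_{n-d}$ and hence admissible by Theorem \ref{thm:EqAdm}. The main obstacle I expect is the bookkeeping for the key lemma: translating the constraint $A^d=0$ through the symbol map, the restriction to $\fp_{n-d+1}^*$, and the twist by $\psi^{d-1}$ to obtain the clean conclusion that the associated variety lives in $\fn_{n-d}^*$. This forces a careful choice of filtration (so that the symbol calculation respects the character twist) and a careful identification of the relevant subvarieties of nilpotent matrices in $\fg_n^*$ with their images in $\fn_{n-d+1}^*$; once this is set up, the depth hypothesis yields the required bound essentially by linear algebra.
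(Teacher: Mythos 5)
Your proposal follows essentially the same strategy as the paper's proof: reduce to finite generation over $\fn_{n-d}$ via Theorem~\ref{thm:EqAdm}, show that the associated variety of $E^d(\pi)$ projects to $\{0\}$ in $\fv_{n-d+1}^*$ using the key lemma of \S\S\ref{subsec:keylem} together with the depth hypothesis, and finish by the linear-algebra observation (Lemma~\ref{lem:LinAlg}) that $(u+v)^d=0$ forces $v=0$, followed by a Nullstellensatz argument to get finite generation over $\fn_{n-d}$. The one ingredient you gesture at (``careful choice of filtration'') but do not name is Proposition~\ref{prop:WonFilt} (Gabber's result that a good $\g$-filtration on a Harish-Chandra module is automatically a good $\fn$-filtration); this is what lets you take a good $\g$-filtration on $\pi$, compute with $\cV(\pi)$ via symbols, push it down to $E^d(\pi)$, and still know that the resulting filtration on $E^d(\pi)$ is a good $\fn_{n-d+1}$-filtration so that its associated variety is well-defined and controls finite generation.
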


\begin{cor}\label{cor:HCNilp}
Let $\pi \in \cM_{HC}^d(G_n)$. Then $\fv_{n-d+1}$ acts nilpotently on $E_{HC}^d(\pi)$. Namely, there exists a number $k$ such that for any $X \in \fv_{n-d+1}$, $X^k$ acts by zero on $E_{HC}^d(\pi)$.
\end{cor}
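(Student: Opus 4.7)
Set $W := E^d_{HC}(\pi)$ and $\fv := \fv_{n-d+1}$. Since $\fv$ is the abelian nilradical of $\fp_{n-d+1} = \g_{n-d} \ltimes \fv$, the $\fp_{n-d+1}$-module structure makes $W$ a module over the symmetric algebra $\Sym(\fv)$; moreover, this action is $\g_{n-d}$-equivariant, with $\g_{n-d}$ acting on $\fv$ through the defining representation. My plan is to show that the annihilator ideal $J := \mathrm{Ann}_{\Sym(\fv)}(W) \subseteq \Sym(\fv)$ has zero locus $V(J) = \{0\} \subseteq \fv^*$. Since $\Sym(\fv)$ is reduced, this is equivalent to $\Sym^{\geq k}(\fv) \subseteq J$ for some $k$, which is exactly the desired conclusion that $X^k$ acts by zero on $W$ uniformly in $X \in \fv$.

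First I would check that $J$ is $\g_{n-d}$-stable: for $f \in J$, $X \in \g_{n-d}$, and $w \in W$, one has $(X\cdot f)w = X(fw) - f(Xw) = 0$. Hence $V(J) \subseteq \fv^* \cong \C^{n-d}$ is Zariski-closed and invariant under $GL_{n-d}(\C)$ acting by the dual of the defining representation. Because this action is transitive on $\C^{n-d}\setminus\{0\}$, the only options are $V(J) = \{0\}$ (in which case we are done) and $V(J) = \fv^*$, i.e.\ $J = 0$ and $\Sym(\fv)$ acts faithfully on $W$.

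The main obstacle is ruling out the faithful case. Here I would invoke Theorem \ref{thm:HCAdm}: $W$ is admissible over $\g_{n-d}$, hence of finite length and Noetherian over $U(\g_{n-d})$ by Theorem \ref{thm:EqAdm}. The ascending chain of $\g_{n-d}$-submodules $W^{(k)} := \{w \in W : \fv^k w = 0\}$ (each $\g_{n-d}$-stable since $[\g_{n-d}, \fv] \subseteq \fv$) stabilizes at some $W^{(\infty)}$ on which $\fv^{k_0}$ vanishes, so it suffices to show $W^{(\infty)} = W$. If not, the nonzero quotient $W/W^{(\infty)}$ is a finite-length admissible $(\g_{n-d}, K_{n-d})$-module carrying a $\g_{n-d}$-equivariant $\Sym(\fv)$-action with trivial annihilator. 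To derive a contradiction I would use the weight structure: the $\g_{n-d}$-equivariance forces a weight vector $Y \in \fv$ to shift $\ft$-weights of $W$ by the corresponding weight of the defining representation, so iterated $Y$-action pushes weights out of the support of a finite-length Harish-Chandra module (bounded in that direction), producing the needed nilpotency. Alternatively, and probably more cleanly, one can apply the key lemma of \S\S\ref{subsec:keylem} to bound the $\fv$-factor of the joint associated variety of $W$ over $U(\fp_{n-d+1})$ by $\cV(\pi)$, which under the depth-$d$ hypothesis forces this projection to $\{0\}$.
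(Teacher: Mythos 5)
Your set-up is clean, and the $\GL_{n-d}(\C)$-equivariance argument correctly reduces the problem to ruling out $J := \mathrm{Ann}_{\Sym(\fv)}(W) = 0$. But neither of your two completions actually closes this gap, and that is where the real content of the statement lives. The weight-structure idea does not work: Harish-Chandra modules are typically infinite-dimensional with $\ft$-weight support unbounded in every direction, so ``pushing weights out of the support'' is unavailable; and the chain reduction $W \mapsto W/W^{(\infty)}$ does not yield a module with faithful $\Sym(\fv)$-action from $W^{(\infty)} \neq W$, since $(W')^{(1)} = 0$ is much weaker than torsion-freeness. The key-lemma route (in effect Corollary \ref{cor:AV0}) only gives $pr_{\fv^*}(AV(W)) = \{0\}$, i.e.\ nilpotency of $\fv$ on the \emph{associated graded} module $\mathrm{Gr}_F(W)$, and this does not imply nilpotency on $W$ itself: for example $L = \C[X]/(X^k - 1)$ with the degree filtration has $\bar X^k = 0$ on $\mathrm{Gr}(L)$ while $X^k = 1$ on $L$. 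This is precisely why the corollary is not an immediate byproduct of the proof of Theorem \ref{thm:HCAdm} and needs its own argument.

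The missing ingredient is a way to produce a genuine nonzero element of $J$. The paper does this by invoking $Z$-finiteness: since $W$ is admissible over $\g_{n-d}$ (Theorem \ref{thm:HCAdm} via Theorem \ref{thm:EqAdm}), the center of $U(\g_{n-d})$, and in particular the identity matrix $I$, acts through a finite-dimensional quotient, so there is a polynomial $p$ with $p(I) \in \mathrm{Ann}_{U(\fp_{n-d+1})}(W)$. Because $[I,X] = X$ for $X \in \fv_{n-d+1}$, one checks $\mathrm{ad}(X)^{k}(p(I))$ is a nonzero multiple of $X^{k}$ with $k = \deg p$, and the annihilator is $\mathrm{ad}$-stable, so $X^{k} \in \mathrm{Ann}(W)$ with $k$ independent of $X$. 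Once you have that, your $\GL_{n-d}$-dichotomy is not even needed; conversely, if you want to keep your framework, this is exactly the observation that rules out $J = 0$.
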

\begin{proof}
Let $\tau:= E_{HC}^d(\pi)$. Since it is admissible over $\g_{n-d}$, it is finite over the center of $U(\g_{n-d})$. Hence there exists a polynomial $p$ such that $\tau(p(I))=0$, where $I \in \g_{n-d}$ denotes the identity matrix. Let $k$ be the degree of $p$ and $X \in \fv_{n-d+1}$ be any element. We will show that $\tau(X)^k=0$.

Note that $[I,X] = X$ and hence $ad(X)^k(I^k)= k!(-X)^k$ and $ad(X)^kI^{k-i}=0$ for any $i>0$. Thus $ad(X)^k (p(I))$ is proportional to $X^k$. On the other hand, $\tau(p(I))=0$, hence $\tau(ad(X)^k (p(I)))=0$ and thus $\tau(X)^k=0$.
\end{proof}

\begin{cor}
Let $\pi \in \cM_{HC}^d(G_n)$. Then
\begin{enumerate}
\item $D_{HC}^{d}(\pi)=E_{HC}^{d}(\pi).$
\item $E_{HC}^{d+1}(\pi)=D_{HC}^{d+1}(\pi)=B_{HC}^{d+1}(\pi)=0$.
\end{enumerate}
\end{cor}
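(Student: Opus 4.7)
The strategy is to deduce both statements immediately from Corollary \ref{cor:HCNilp}, which supplies a uniform integer $k$ such that $X^k$ annihilates $\tau := E^d_{HC}(\pi)$ for every $X \in \fv_{n-d+1}$. No further admissibility or geometric input beyond this corollary is needed.

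For part (1), I would observe that $D^d_{HC}(\pi) = \Psi(\tau) = \tau_{gen,\fv_{n-d+1}} = \lim_l \tau/\fv^l\tau$, and show that this inverse limit coincides with $\tau$ itself. Since $\fv := \fv_{n-d+1}$ is abelian and finite-dimensional, $U(\fv) \cong \Sym(\fv)$; fixing a basis $e_1,\dots,e_r$ of $\fv$, any monomial in $U(\fv)$ of total degree greater than $r(k-1)$ contains some $e_i^k$ as a factor and hence acts as zero on $\tau$. Therefore $\fv^N \tau = 0$ for $N := r(k-1)+1$, so $\tau/\fv^l\tau = \tau$ for every $l \geq N$, and the inverse limit is $\tau$. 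This yields $D^d_{HC}(\pi) = E^d_{HC}(\pi)$.

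For part (2), the key observation is that $E^{d+1}_{HC}(\pi) = \Phi(\tau)$ up to the harmless $|\det|^{-1/2}$ twist, and $\Phi(\tau) = \tau_{\fv,\psi}$ is by definition the quotient of $\tau$ by the relations $Xv - \psi(X)v$. Iterating these relations in the quotient gives $\overline{X^k v} = \psi(X)^k \bar v$ for every $v\in\tau$ and every $X\in\fv$. Since $X^k v = 0$ in $\tau$ by Corollary \ref{cor:HCNilp}, we obtain $\psi(X)^k \bar v = 0$ in the quotient. Picking any $X\in\fv$ with $\psi(X)\neq 0$ (which exists because $\psi$ is a non-trivial character) forces $\bar v = 0$ for every $v$, so $\Phi(\tau) = 0$. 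The remaining vanishings $D^{d+1}_{HC}(\pi) = \Psi(0) = 0$ and $B^{d+1}_{HC}(\pi) = \Psi_0(0) = 0$ then follow by functoriality.

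The only point requiring any attention is the use of a \emph{uniform} nilpotency bound $k$, valid simultaneously for all $X\in\fv_{n-d+1}$; this is exactly what Corollary \ref{cor:HCNilp} provides (via the $Z(U(\fg_{n-d}))$-finiteness coming from Theorem \ref{thm:HCAdm}), so there is no real obstacle. Both parts are formal consequences of a uniformly nilpotent action of an abelian Lie algebra, handled via generalized coinvariants in (1) and via coequivariants against a non-trivial character in (2).
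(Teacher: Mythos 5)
Your proof is correct and is precisely the (unwritten) argument the paper intends: the corollary is stated without proof immediately after Corollary \ref{cor:HCNilp}, and both parts are formal consequences of the uniform nilpotency of the $\fv_{n-d+1}$-action that Corollary \ref{cor:HCNilp} provides. Your pigeonhole upgrade from $X^k=0$ for all $X$ to $\fv^{N}\tau=0$ for a finite-dimensional abelian $\fv$, and your telescoping of $\overline{Xv}=\psi(X)\bar v$ to $\psi(X)^k\bar v=0$ with $\psi$ non-trivial, fill in exactly the details the authors leave implicit.
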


\begin{thm}[\cite{AGS2}, Theorem A] \label{thm:ExactHaus}

For any $0 <k \leq n$
\begin{enumerate}
\item \label{it:Exact}
 $E_{\infty}^{k}$ is an exact functor
\item \label{it:Haus} For any $\pi \in \cM_{\infty}(G_n)$, the natural (quotient) topology on $E_{\infty}^{k}(\pi)$ is Hausdorff, i.e. $\fu^k(\pi \otimes (-\psi^k))$ is closed in $\pi$.
\end{enumerate}
\end{thm}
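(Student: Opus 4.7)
The plan is to prove both statements by induction on $k$, using the identity $E^k = \Phi \circ E^{k-1}$ up to a harmless twist by a power of $|\det|^{1/2}$. The base case $k=1$ is immediate since $E^1$ is the identity. The inductive step reduces to establishing the analogous exactness and Hausdorffness properties for the single-step functor $\Phi \colon \pi \mapsto \pi_{\fv_n,\psi}\otimes |\det|^{-1/2}$, restricted to a suitable class $\cC_n$ of topological $\fp_n$-modules that contains the restriction of every $\pi \in \cM_\infty(G_n)$ and is stable under $\Phi$.

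The guiding picture is geometric: under Fourier transform along the abelian unipotent radical $V_n$, a smooth representation of $P_n = G_{n-1}\ltimes V_n$ corresponds to a $G_{n-1}$-equivariant ``smooth Fr\'echet sheaf'' on $\fv_n^{*}$, and then $\Psi$ becomes the fiber at $0\in \fv_n^{*}$ while $\Phi$ becomes the fiber at the character $\psi$, which lies in the unique open $G_{n-1}$-orbit. Because $\psi$ is a smooth point of this sheaf, the fiber there should automatically be Hausdorff in its natural quotient topology, and restriction of sheaves to the open orbit is exact. Thus, provided $\cC_n$ is chosen so as to encode this sheaf picture (smoothness, moderate growth, nuclear Fr\'echet topology), both exactness and Hausdorffness of $\Phi$ on $\cC_n$ will drop out of standard sheaf-theoretic considerations, once $\cC_n$ is shown to be preserved by $\Phi$.

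The main obstacle is precisely verifying that $\cC_n$ is closed under $\Phi$ and contains the restrictions of objects of $\cM_\infty(G_n)$. Concretely, one must show (a) that $\fv_n(\pi\otimes(-\psi))$ is already closed in $\pi$, not merely that its closure defines the natural quotient topology, and (b) that the resulting Fr\'echet quotient carries a compatible $\fp_{n-1}$-action of moderate growth. For (a), the approach is Fourier analysis on $V_n$: as a smooth representation of the abelian group $V_n$, $\pi$ decomposes spectrally, and the combination of moderate growth and nuclearity should ensure that the spectral projection away from $\psi$ is continuous, so that $\fv_n(\pi\otimes(-\psi))$ appears as the kernel of evaluation at $\psi$ and is therefore closed. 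For (b), one invokes the Casselman--Wallach globalization together with the fact that $P_{n-1}$ is the stabilizer in $G_{n-1}$ of the character $\psi$, to propagate the smooth moderate-growth structure across the sheaf-theoretic equivalence. Once this class is constructed, iterating $\Phi$ recovers $E^k$ for all $k$ and both parts of the theorem follow.
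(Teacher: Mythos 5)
This theorem is not proven in the present paper: it is imported wholesale as Theorem A of the companion paper \cite{AGS2}, so there is no internal proof here to compare your sketch against. The only internal hint about the argument is the remark in the introduction that exactness and Hausdorffness of $\Phi^k$ are established in \cite{AGS2} ``for a wider class of representations of $\fp_n$,'' which is consistent with the iterative framework you set up; so your overall plan is not unreasonable as a roadmap.

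As a proof, however, the proposal assumes away the hard part. The crux of item (2) is exactly the claim that $\fv_n(\pi \otimes (-\psi))$ is closed in $\pi$, and your argument for this --- a ``spectral projection away from $\psi$'' asserted to be continuous by virtue of moderate growth and nuclearity --- does not exist off the shelf for non-unitary smooth actions of the abelian group $V_n$ on Fr\'echet spaces; if such a projection were automatically continuous there would be nothing to prove. The geometric sheaf picture (fiber at a point of the open orbit, restriction to an open subset is exact) is precisely the heuristic the authors themselves describe in the introduction, and they explicitly flag that it does not translate directly into the archimedean setting: ``the notion of fiber of a sheaf behaves differently than in the non-archimedean case; in particular it is not exact.'' Turning the heuristic into a proof requires a concrete device --- realizing the relevant $\fp_n$-modules via Schwartz sections of suitable tempered bundles, or directly computing the higher $(\fu^k,\psi^k)$-twisted Lie-algebra homology --- and your proposal supplies neither. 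You also never define the class $\cC_n$ nor verify its stability under $\Phi$, which you yourself identify as ``the main obstacle.'' So the proposal gives a plausible outline of the intended strategy but contains no proof of the decisive closedness statement, which is where all the substance of \cite{AGS2} lies.
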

%\DimaB{For proof see  \cite[Theorem A]{AGS2}}
%\S \ref{sec:ExactHaus}.

\begin{cor} \label{cor:SmoothDer}
Let $\pi \in \cM_{\infty}(G_n)$ be of depth $d$. Let $0 <k \leq n$. Then
\begin{enumerate}
\item The natural map $p:E_{HC}^{k}(HC(\pi)) \to HC(E_{\infty}^{k}(\pi))$ is onto. \label{it:onto}
\item $E_{\infty}^{d}(\pi)= D_{\infty}^{d}(\pi) \in \cM_{\infty}(G_{n-d})$. \label{it:1=2}
\item $E_{\infty}^{d+1}(\pi)=D_{\infty}^{d+1}(\pi)=B_{\infty}^{d+1}(\pi)= 0$. \label{it:d+1=0}
\end{enumerate}
\end{cor}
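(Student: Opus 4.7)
The plan is to prove (\ref{it:d+1=0}), (\ref{it:1=2}), (\ref{it:onto}) in that order, based on a common density-plus-Hausdorffness mechanism. By Casselman-Wallach (Theorem \ref{thm:CW}), $HC(\pi)$ is dense in $\pi$, so the continuous quotient $\pi\to E^k_\infty(\pi)$ sends the image of $HC(\pi)$ to a dense subspace of $E^k_\infty(\pi)$; this image factors through the natural map $p\colon E^k_{HC}(HC(\pi))\to E^k_\infty(\pi)$. Since $E^k_\infty(\pi)$ is Hausdorff by Theorem \ref{thm:ExactHaus}(\ref{it:Haus}), any continuous $\fp_{n-k+1}$-operator that annihilates the HC derivative $E^k_{HC}(HC(\pi))$ must annihilate the entire smooth derivative $E^k_\infty(\pi)$.

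For (\ref{it:d+1=0}), the corollary immediately following Corollary \ref{cor:HCNilp} gives $E^{d+1}_{HC}(HC(\pi))=0$; the image of $p$ in $E^{d+1}_\infty(\pi)$ is thus both dense and trivial, forcing $E^{d+1}_\infty(\pi)=0$ by Hausdorffness, after which $D^{d+1}_\infty(\pi)$ and $B^{d+1}_\infty(\pi)$ vanish as (generalized) coinvariants of the zero module. For (\ref{it:1=2}), Corollary \ref{cor:HCNilp} furnishes a uniform bound $X^l=0$ for every $X\in\fv_{n-d+1}$ acting on $E^d_{HC}(HC(\pi))$; since $\fv_{n-d+1}$ is abelian of dimension $n-d$, a pigeonhole argument on monomials upgrades this to $(\fv_{n-d+1})^{N}\cdot E^d_{HC}(HC(\pi))=0$ for some $N$, and the density mechanism transfers this annihilation to $E^d_\infty(\pi)$. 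Consequently the inverse system defining $\Psi$ stabilizes and $D^d_\infty(\pi)=E^d_\infty(\pi)$. To identify this Frechet quotient with an object of $\cM_\infty(G_{n-d})$, I would invoke Theorem \ref{thm:HCAdm} (admissibility of $E^d_{HC}(HC(\pi))$ as a $(\g_{n-d},K')$-module) together with Casselman-Wallach: the smooth, moderate-growth, Hausdorff Frechet quotient $E^d_\infty(\pi)$ contains $E^d_{HC}(HC(\pi))$ as a dense admissible $(\g_{n-d},K')$-submodule, so by uniqueness of the Casselman-Wallach globalization it must coincide with that globalization and lie in $\cM_\infty(G_{n-d})$.

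Finally, (\ref{it:onto}) splits into three cases: when $k=d$, the identification above gives $HC(E^d_\infty(\pi))=E^d_{HC}(HC(\pi))$ and $p$ is an isomorphism; when $k>d$, iterating (\ref{it:d+1=0}) yields $E^k_\infty(\pi)=0$ so $p$ is vacuously onto; and when $k<d$, the same density argument delivers surjectivity after checking finiteness on $K'$-isotypic components. The main obstacle, and the technically subtle point of the whole proof, will be the Casselman-Wallach identification in (\ref{it:1=2}): one must verify that the Hausdorff quotient topology on $E^d_\infty(\pi)$ agrees with the canonical moderate-growth Frechet topology coming from the globalization, and that the residual nilpotent $\fv_{n-d+1}$-action on $E^d_\infty(\pi)$ drops away so that the object genuinely lives in $\cM_\infty(G_{n-d})$ rather than merely as a $\fp_{n-d+1}$-representation.
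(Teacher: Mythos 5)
Your proposal is correct in substance and uses precisely the same ingredients as the paper (Theorem \ref{thm:HCAdm}, Corollary \ref{cor:HCNilp}, the exactness and Hausdorffness from Theorem \ref{thm:ExactHaus}, and density of $HC(\pi)$ in $\pi$), but it inverts the logical order. The paper establishes (\ref{it:onto}) first, running the density argument at the level of $K_{n-k}$-isotypic components and invoking Lemma \ref{lem:FinDense}; then (\ref{it:1=2}) and (\ref{it:d+1=0}) drop out immediately, because surjectivity of $p$ exhibits $HC(E^d_\infty(\pi))$ as a quotient of the admissible module $E^d_{HC}(HC(\pi))$ (hence admissible) and at the same time transfers the nilpotency of $\fv_{n-d+1}$. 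You instead prove (\ref{it:d+1=0}) and (\ref{it:1=2}) directly by passing annihilation, respectively nilpotency, from the Harish-Chandra side to the smooth side via density and Hausdorffness, and only afterwards deduce (\ref{it:onto}). This works, and your pigeonhole upgrade from $X^{l}=0$ for all $X\in\fv_{n-d+1}$ to $(\fv_{n-d+1})^{N}=0$ is the right elementary observation. The one place where the paper's order pays off is the claim $E^d_\infty(\pi)\in\cM_\infty(G_{n-d})$: with (\ref{it:onto}) already in hand this is a one-line deduction, whereas your route through ``uniqueness of the Casselman--Wallach globalization'' requires separately knowing that the Hausdorff quotient is a nuclear Fr\'echet $G_{n-d}$-representation of moderate growth whose $K_{n-d}$-finite vectors are exactly the image of $p$ --- the subtlety you yourself flag. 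Within your scheme I would therefore prove the $k=d$ case of (\ref{it:onto}) before finishing (\ref{it:1=2}), which makes the admissibility step clean and also removes the slight overclaim that $p$ is an \emph{isomorphism} at $k=d$ (the paper, and your argument, only need and only give surjectivity). Finally, note that for $k<d$ both you and the paper lean on finite-dimensionality of $K_{n-k}$-isotypes of $E^k_{HC}(HC(\pi))$, which Theorem \ref{thm:HCAdm} as stated yields only at $k=d$ (for $k>d$ the module vanishes by the corollary to Corollary \ref{cor:HCNilp}); this gap is present in the paper's own proof and is not one your reordering introduces --- and it is harmless for the uses the paper makes of (\ref{it:onto}), which are all at $k=d$.
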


For the proof we will need the following standard lemma.

\begin{lem}\label{lem:FinDense}
If a locally convex Hausdorff topological  vector space $W$ has a dense finite dimensional subspace then $W$ is finite dimensional.
\end{lem}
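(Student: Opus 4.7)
The plan is to reduce the statement to the standard fact that every finite dimensional subspace of a Hausdorff topological vector space is automatically closed. Once that is in place, the lemma follows in one line: if $V\subset W$ is finite dimensional and dense, then $V$ is simultaneously closed and dense, whence $V=\overline V=W$, so $W$ is finite dimensional.

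To establish the closedness statement I would invoke the classical Tychonoff uniqueness theorem for finite dimensional topological vector spaces: any Hausdorff TVS structure on $F^{n}$ (with $F=\mathbb{R}$ or $\mathbb{C}$) coincides with the Euclidean one. Thus the finite dimensional subspace $V\subset W$, equipped with the subspace topology, is linearly homeomorphic to some $F^{\dim V}$, and in particular is complete. Since a complete subspace of a Hausdorff TVS is closed (a limit point would be the limit of a Cauchy net in $V$, which must already converge inside $V$ by completeness, and Hausdorffness forces the two limits to agree), we conclude that $V$ is closed in $W$.

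I do not expect any serious obstacle here: local convexity is not even needed, only Hausdorffness, and both the Tychonoff uniqueness result and ``complete $\Rightarrow$ closed in Hausdorff'' are textbook facts. The only minor point to mention is why a finite dimensional Hausdorff TVS is complete, which again reduces to the Tychonoff identification with $F^{\dim V}$.
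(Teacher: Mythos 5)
Your proof is correct, and the observation that local convexity is superfluous (only Hausdorffness matters) is also correct. The paper itself labels this lemma as ``standard'' and supplies no proof, so there is nothing to compare your argument against; what you give is indeed the usual textbook argument: Tychonoff's uniqueness theorem implies a finite-dimensional Hausdorff TVS is linearly homeomorphic to Euclidean space and hence complete, a complete subspace of a Hausdorff TVS is closed, and a subspace that is both dense and closed is the whole space.
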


%For the convenience of the reader, we include the proof in \S\S \ref{subsec:PfFinDense}.

\begin{proof}[Proof of Corollary \ref{cor:SmoothDer}]
%By Theorem \ref{thm:Exact}, t
Let us first prove part \eqref{it:onto}.
The quotient map $\pi \to E_{\infty}^{k}(\pi)$ is onto. Thus, $HC(\pi) \to E_{\infty}^{k}(\pi)$ has dense image, hence $E_{HC}^{k}(HC(\pi))\to E_{\infty}^{k}(\pi)$ has dense image and hence $p$ has dense image. Let $\rho$ be a $K_{n-k}$-type. Consider $p^{\rho}: (E_{HC}^{k}(HC(\pi)))^{\rho} \to (HC(E_{\infty}^{k}(\pi)))^{\rho}$. It must also have dense image. By Theorem \ref{thm:HCAdm}, $ (E_{HC}^{k}(HC(\pi)))^{\rho}$ is finite dimensional and by Theorem \ref{thm:ExactHaus}, $(HC(E_{\infty}^{k}(\pi)))^{\rho}$ is Hausdorff. Thus, $p^{\rho}$ is onto for any $\rho$ and hence, by Lemma \ref{lem:FinDense}, $p$ is onto. Thus (\ref{it:onto}) holds.

By Theorem \ref{thm:HCAdm}, $E_{HC}^{d}(HC(\pi))$ is admissible. Thus  $HC(E_{\infty}^{d}(\pi))$ is admissible and thus $E_{\infty}^{d}(\pi) \in \cM_{\infty}(G_{n-d})$.
By Corollary \ref{cor:HCNilp}, $\fv_{n-d+1}$ acts nilpotently on $E_{HC}^{d}(HC(\pi))$, and hence, by (\ref{it:onto}), on $HC(E_{\infty}^{d}(\pi))$ and hence, by continuity, $\fv_{n-d+1}$ acts nilpotently on
%$D_{1,HC}^{d}(HC(\pi))$, and hence, by (\ref{it:onto}), on
$E_{\infty}^{d}(\pi)$. This implies (\ref{it:1=2}) and (\ref{it:d+1=0}).
\end{proof}

Note that this does not prove that the $d$-th derivative is non-zero. However it is true; see the remarks following Theorem \ref{thm:main} in the introduction.

%\begin{rem}
%We do not prove in this paper that the $d$-th derivative is non-zero. However, it follows from \cite[Theorem A]{GS} that for a unitarizable $\pi$,  $B_{*}^{d}(\pi) \neq 0$ (and hence  $E_{*}^{d}(\pi)\neq 0$).
%\end{rem}

\begin{cor}
Let $\pi \in \cM_{\infty}(G_n)$ be of depth $d$. Then
$B_{\infty}^{d}(\pi) \in \cM_{\infty}(G_{n-d})$.
\end{cor}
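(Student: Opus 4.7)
The plan is to combine a Casselman--Wallach argument with Proposition \ref{prop:D3Adm}. Write $E := E_\infty^d(\pi)$ and $\fv := \fv_{n-d+1}$. By Corollary \ref{cor:SmoothDer}(\ref{it:1=2}), $E \in \cM_\infty(G_{n-d})$, so $M := HC(E)$ is an admissible $(\g_{n-d}, K_{n-d})$-module. Since $\fv$ is a $G_{n-d}$-stable abelian ideal of $\fp_{n-d+1}$, the subspace $\fv M \subset M$ is a $(\g_{n-d}, K_{n-d})$-submodule, and $M' := M/\fv M$ is again admissible: applying $\fv$-coinvariants to the surjection $p \colon E^d_{HC}(HC(\pi)) \twoheadrightarrow M$ of Corollary \ref{cor:SmoothDer}(\ref{it:onto}) produces a surjection from the admissible module $B^d_{HC}(HC(\pi))$ (Proposition \ref{prop:D3Adm}) onto $M'$.

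The key step will be to show that $\fv E$ is closed in $E$. The multiplication map $\mu \colon \fv \otimes E \to E$, $X \otimes v \mapsto Xv$, is $G_{n-d}$-equivariant because $G_{n-d}$ normalizes $\fv$; since $\fv$ is finite-dimensional, $\fv \otimes E$ belongs to $\cM_\infty(G_{n-d})$, so $\mu$ is a morphism in this category and has closed image by Corollary \ref{cor:CW}(\ref{it:ClosIm}). This image is precisely $\fv E$, so $B_\infty^d(\pi) = E/\fv E$ is a Hausdorff Fréchet quotient.

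To finish, I will identify $B_\infty^d(\pi)$ with the Casselman--Wallach globalization $\Gamma(M')$. Applying the exact functor $\Gamma$ to the short exact sequence $0 \to \fv M \to M \to M' \to 0$ and using $\Gamma(M) = E$ (Theorem \ref{thm:CW}) yields $0 \to \Gamma(\fv M) \to E \to \Gamma(M') \to 0$ in $\cM_\infty(G_{n-d})$. The subspace $\Gamma(\fv M) \subset E$ is closed and contains $\fv M$, hence sits inside $\fv E$; the reverse inclusion follows by approximating any $v \in E$ by $K$-finite vectors and using continuity of the $\fv$-action, yielding $\Gamma(\fv M) = \fv E$. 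Hence $\Gamma(M') \cong E/\fv E = B_\infty^d(\pi) \in \cM_\infty(G_{n-d})$. The main obstacle in this proof is the closedness of $\fv E$, which is automatic in the non-archimedean case; here it rests on Corollary \ref{cor:CW}(\ref{it:ClosIm}), with everything else reducing to routine use of the exactness of $\Gamma$ and the admissibility from Proposition \ref{prop:D3Adm}.
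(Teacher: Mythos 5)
Your argument is correct and its key step — closedness of $\fv_{n-d+1}E$ in $E := E_\infty^d(\pi)$ via Corollary~\ref{cor:CW}(\ref{it:ClosIm}) applied to the $G_{n-d}$-equivariant action map $\fv_{n-d+1}\otimes E \to E$ — is exactly the paper's proof. The surrounding scaffolding (admissibility of $M' = HC(E)/\fv HC(E)$ via Proposition~\ref{prop:D3Adm}, and the identification of the quotient with $\Gamma(M')$) is redundant: once the image of the action map is closed, the quotient $E/\fv_{n-d+1}E$ is the cokernel of a morphism in the abelian category $\cM_\infty(G_{n-d})$ (Corollary~\ref{cor:CW}(\ref{it:Ab})) and so lies in that category automatically.
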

\begin{proof}
By Corollary \ref{cor:SmoothDer}, $E_{\infty}^{d}(\pi) \in \cM_{\infty}(G_{n-d})$. Note that $B_{\infty}^{d}(\pi)$ is the cokernel of the action map $a:\fv_{n-d+1} \otimes E_{\infty}^{d}(\pi) \to E_{\infty}^{d}(\pi)$. Thus, it is enough to show that $\Im(a)$ is closed.
This follows from Corollary \ref{cor:CW}.
\end{proof}

\Dima{
\begin{lem}\label{AnnVarBE}
For any $\pi \in \cM_{HC}^d(G_n)$,
$\cV(B^{d}(\pi))=\cV(E^d(\pi))$.
\end{lem}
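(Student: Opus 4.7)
The plan is to prove the two inclusions separately, with the trivial direction being $\cV(B^d(\pi)) \subseteq \cV(E^d(\pi))$, which is immediate from the fact that $B^d(\pi) = E^d(\pi)/\fv_{n-d+1} E^d(\pi)$ is a $\g_{n-d}$-module quotient of $E^d(\pi)$ (both are admissible over $\g_{n-d}$ by Theorem \ref{thm:HCAdm} and Proposition \ref{prop:D3Adm}, so the annihilator varieties live in $\g_{n-d}^*$).

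For the nontrivial inclusion $\cV(E^d(\pi)) \subseteq \cV(B^d(\pi))$, set $\tau := E^d(\pi)$. By Corollary \ref{cor:HCNilp}, $\fv_{n-d+1}$ acts nilpotently on $\tau$; since $\fv_{n-d+1}$ is abelian, a standard polarization argument upgrades the uniform element-wise vanishing $X^k = 0$ to the vanishing of the whole ideal, i.e.\ $\fv_{n-d+1}^N \tau = 0$ for some $N$. Consider the finite decreasing filtration $\tau^j := \fv_{n-d+1}^j \tau$. Each $\tau^j$ is a $\g_{n-d}$-submodule because $[\g_{n-d},\fv_{n-d+1}]\subseteq \fv_{n-d+1}$. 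Since $\fv_{n-d+1}$ is abelian, the multiplication map factors through symmetric powers and, after reducing modulo $\tau^{j+1}$, yields a surjective $\g_{n-d}$-module homomorphism
$$
S^j(\fv_{n-d+1}) \otimes B^d(\pi) \twoheadrightarrow \tau^j/\tau^{j+1}.
$$

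Next I invoke the standard fact that for a finitely-generated $\g_{n-d}$-module $M$ and a finite-dimensional $\g_{n-d}$-module $F$ one has $\cV(M \otimes F) = \cV(M)$: choosing a good filtration on $M$ and filtering $M\otimes F$ by $M_k \otimes F$, the associated graded is $\operatorname{gr}(M) \otimes F$ with $S(\g_{n-d})$ acting only on the first tensor factor, so the supports coincide. Applied to $M = B^d(\pi)$ and $F = S^j(\fv_{n-d+1})$, this gives $\cV(\tau^j/\tau^{j+1}) \subseteq \cV(B^d(\pi))$. Since $\cV$ is additive on short exact sequences (equivalently, on finite filtrations), we conclude $\cV(\tau) = \bigcup_j \cV(\tau^j/\tau^{j+1}) \subseteq \cV(B^d(\pi))$, completing the proof.

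The main point requiring a bit of care is the passage from element-wise nilpotency (as phrased in Corollary \ref{cor:HCNilp}) to uniform annihilation by a power of the ideal $\fv_{n-d+1}$; this is where abelianness of $\fv_{n-d+1}$ is essential, via polarization of $(\sum c_i X_i)^k = 0$ in the free variables $c_i$. Everything else is formal, and the tensor-with-finite-dimensional invariance of $\cV$ is a routine consequence of the associated graded computation.
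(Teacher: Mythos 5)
Your proof is correct and takes essentially the same approach as the paper: both arguments use Corollary \ref{cor:HCNilp} to get a finite descending filtration of $E^d(\pi)$ by powers of $\fv_{n-d+1}$, observe that each graded piece is a quotient of something obtained from $B^d(\pi)$ by tensoring with a finite-dimensional $\g_{n-d}$-module (you use $S^j(\fv_{n-d+1}) \otimes B^d(\pi)$ directly, the paper uses $Gr^i \otimes \fv_{n-d+1} \onto Gr^{i+1}$ inductively), and then sum up via additivity of $\cV$ over filtrations. You are more explicit than the paper about the polarization step upgrading element-wise nilpotency to $\fv_{n-d+1}^N \tau = 0$ and about the tensor-invariance of $\cV$, both of which the paper uses tacitly.
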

\begin{proof}
By Corollary \ref{cor:HCNilp}, there exists $k$ such that $\fv_{n-d+1}^k E^d(\pi) =0$. Consider the descending filtration $F^i(E^d(\pi)):= \fv_{n-d+1}^i E^d(\pi)$. Then $Gr^0(E^d(\pi))=B^d(\pi)$, $Gr^i(\pi) \in \cM_{HC}(G_{n-d})$ and we have a natural morphism $Gr^i(\pi) \otimes \fv _{n-d+1} \onto Gr^{i+1}(\pi)$, where we view $\fv_{n-d+1}$ as the standard representation of $G_{n-d}$. Thus,
$\cV(E^d(\pi)) = \cup_{i=1}^k \cV(Gr^i(E^d(\pi)))$ and $\cV(Gr^i(E^d(\pi)))\supset \cV(Gr^{i+1}(E^d(\pi)))$. Thus $\cV(B^{d}(\pi))=\cV(Gr^0(E^d(\pi))= \cV(E^d(\pi))$.
\end{proof}
By Corollary \ref{cor:SmoothDer} the same statement holds for $\pi \in \cM_{\infty}^d(G_n)$ and the same proof works.
}

\begin{remark}
\cite[Theorem 5.0.4]{GS-Gen} gives a formula for  $\cV(B^{k}(\pi))$ in terms of $\cV(\pi)$ and $k$, for any $k$.
\end{remark}

\begin{thm}[\cite{AGS2}, Theorem B]\label{thm:Prod}
Let $n=n_1+\cdots+n_k$ and let $\chi_i$ be characters of $G_{n_i}$. Let $\pi= \chi_1 \times \cdots \times \chi_k$ denote the corresponding monomial representation. Then %$depth(\pi) = k$ and
$$E_{\infty}^k(\pi) \Dima{= E^1_{\infty}(\chi_1) \times \cdots \times E^1_{\infty}(\chi_k) }= ((\chi_1)|_{G_{n_1-1}} \times \cdots \times (\chi_k)|_{G_{n_k-1}}).$$
\end{thm}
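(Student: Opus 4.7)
The plan is to establish an archimedean Leibniz-type filtration for $E^k_\infty = \Phi^{k-1}$ on a Bernstein--Zelevinsky product, iterate it, and then annihilate all but one term using the depth-$1$ property of characters.

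For two factors $\pi_1 \in \cM_\infty(G_m)$ and $\pi_2 \in \cM_\infty(G_n)$, I would analyze the restriction $(\pi_1 \times \pi_2)|_{P_{m+n}}$ via the orbit decomposition of the mirabolic $P_{m+n}$ on the Grassmannian $G_{m+n}/P_{(m,n)}$; this yields a geometric filtration whose subquotients are Schwartz-type sections on each orbit. Applying $\Phi$, and using its exactness and Hausdorffness (Theorem \ref{thm:ExactHaus}), produces a filtration of $\Phi(\pi_1 \times \pi_2)$ whose subquotients are identified (up to the normalizing twists $|\det|^{\pm 1/2}$) with Bernstein--Zelevinsky products of the form $F_1(\pi_1) \times F_2(\pi_2)$, where each $F_j$ is either $\Phi$ or the identity functor.

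Iterating this construction $k-1$ times, while invoking the exactness of $\Phi$ at every stage, gives a filtration of $E^k_\infty(\chi_1 \times \cdots \times \chi_k)$ whose associated graded consists of pieces
\begin{equation*}
E^{a_1}_\infty(\chi_1) \times \cdots \times E^{a_k}_\infty(\chi_k), \qquad a_1 + \cdots + a_k = k,\ \ a_i \geq 1.
\end{equation*}
Since $\chi_i$ is a character, $\fv_{n_i}$ acts trivially on $\chi_i$, so direct inspection gives $\Phi(\chi_i) = (\chi_i)_{\fv_{n_i},\psi} \otimes |\det|^{-1/2} = 0$: the non-triviality of $\psi$ forces the quotient by the relations $\alpha v - \psi(\alpha) v = -\psi(\alpha) v$ to vanish. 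Equivalently, $\chi_i$ has depth $1$, so Corollary \ref{cor:SmoothDer}(\ref{it:d+1=0}) gives $E^{a_i}_\infty(\chi_i) = 0$ whenever $a_i \geq 2$. The only composition $(a_1, \ldots, a_k)$ of $k$ into $k$ positive parts each at most $1$ is $a_i = 1$ for all $i$, so all other graded pieces vanish and we conclude
\begin{equation*}
E^k_\infty(\pi) \;\cong\; E^1_\infty(\chi_1) \times \cdots \times E^1_\infty(\chi_k) \;=\; (\chi_1)|_{G_{n_1-1}} \times \cdots \times (\chi_k)|_{G_{n_k-1}}.
\end{equation*}

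\textbf{Main obstacle.} The most delicate step is setting up the Leibniz filtration in the archimedean context with the correct topological structure. While the orbit geometry matches the $p$-adic case, one must verify that the orbit-closure subspaces are genuinely closed inside the induced representation and that after applying $\Phi$ both the filtration and its subquotients remain Hausdorff, so that the identifications with honest BZ products are valid. Theorem \ref{thm:ExactHaus}(\ref{it:Exact})--(\ref{it:Haus}), together with Corollary \ref{cor:CW}(\ref{it:ClosIm}), are precisely what is required for this. A secondary bookkeeping task is tracking the twists $|\det|^{\pm 1/2}$ introduced by each application of $\Phi$ across the iterations and matching them against the parabolic-induction normalization, so that the final answer emerges in the clean unadorned form $(\chi_i)|_{G_{n_i-1}}$ on the right-hand side.
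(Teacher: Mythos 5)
The paper does not actually prove this theorem: it is imported as \cite[Theorem B]{AGS2}, and the present text only states it. So there is no ``paper's own proof'' to compare against; the paper's \S 1.2 merely says that in \cite{AGS2} the result is proved ``by computing $\Phi^{d-1}$ on certain representations of $\fp_n$ using the results on exactness and Hausdorffness of $\Phi$.'' Your orbit-filtration strategy is the same general shape, so it is plausible that you are on the right track.

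That said, your sketch has two concrete problems. First, the combinatorics is off: you state the filtration pieces are $E^{a_1}_\infty(\chi_1)\times\cdots\times E^{a_k}_\infty(\chi_k)$ with $a_1+\cdots+a_k=k$ and $a_i\geq 1$. With $k$ positive parts summing to $k$, the only composition is $a_i\equiv 1$, so the depth argument you then run (killing pieces with some $a_i\geq 2$) is vacuous — those pieces never appear. The $p$-adic Leibniz rule of Bernstein--Zelevinsky allows $a_i\geq 0$ (with $\pi^{(0)}=\pi$), which is exactly what makes the vanishing $\Phi(\chi_i)=0$ load-bearing; as written your accounting doesn't cohere. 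Second, and more seriously, the clean Leibniz filtration you invoke is a statement about the full derivative functor $B^k=\Psi_0\circ\Phi^{k-1}$, not about $E^k=\Phi^{k-1}$ alone. Between applications of $\Phi$ one is working with representations of $\fp_{n-j}$, and the subquotients of the orbit filtration are mirabolic (``hatted'') products of a $P$-module with a $G$-module, not ordinary Bernstein--Zelevinsky products of $G$-modules. Passing from these to honest BZ products requires knowing a priori that $\fv_{n-k+1}$ acts trivially on $E^k(\pi)$, i.e. that $E^k=B^k$ here — but the paper derives exactly that fact \emph{from} Theorem~\ref{thm:Prod} in the corollary that follows it, so you cannot assume it. The ``main obstacle'' you flag (closedness of orbit strata, Hausdorffness of the quotients, and the role of Theorem~\ref{thm:ExactHaus} and Corollary~\ref{cor:CW}) is correctly identified as where the real work lives, but at present your proposal contains neither a proof of the archimedean Leibniz filtration for $\Phi^{k-1}$ itself nor a resolution of the $E^k$ vs.\ $B^k$ bookkeeping, and these together constitute essentially the whole theorem.
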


%We prove this theorem in ?? \cite{AGS2}.
%\S \ref{sec:PfProd}.

\begin{remark}
$ $
\begin{enumerate}
\item Note that $depth(\pi)=k$ by Corollary \ref{cor:DepthProd}.

\item In the special case $n=k$ this theorem implies that the space of Whittaker functionals on a principal series representation is one-dimensional. By the same example we see that an analog of Theorem \ref{thm:Prod}  for $E_{HC}$ does not hold, since the space of Whittaker functionals on the Harish-Chandra module of a principal series representation has dimension $n!$.
\end{enumerate}
\end{remark}

From Theorem \ref{thm:Prod}, Theorem \ref{thm:ExactHaus} and Corollary \ref{cor:SmoothDer} we obtain
\begin{cor}
Let  $I= \chi_1 \times \cdots \times \chi_k$ be a monomial representation. Let $\pi$ be any subquotient of $I$. Then $E_{\infty}^k(\pi)\cong D_{\infty}^k(\pi)\cong B_{\infty}^k(\pi)$.
\end{cor}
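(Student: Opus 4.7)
The plan is to reduce the corollary to the single observation that $\fv_{n-k+1}$ acts trivially on $E^k_\infty(I)$, and then to transfer this triviality to an arbitrary subquotient using the exactness of $E^k_\infty$. Once $\fv_{n-k+1}$ is known to act trivially on $E^k_\infty(\pi)$, both the generalized-coinvariants functor $\Psi$ and the ordinary-coinvariants functor $\Psi_0$ act as the identity on it, and the chain $E^k_\infty(\pi) \twoheadrightarrow D^k_\infty(\pi) \twoheadrightarrow B^k_\infty(\pi)$ collapses to an isomorphism.

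First I would establish triviality of the $\fv_{n-k+1}$-action on $E^k_\infty(I)$. By Theorem \ref{thm:Prod},
$$E^k_\infty(I) \;\cong\; \chi_1|_{G_{n_1-1}} \times \cdots \times \chi_k|_{G_{n_k-1}},$$
which is constructed as a (normalized) parabolic induction, hence naturally a representation of $G_{n-k}$. Since $G_{n-k}$ is the Levi factor of $P_{n-k+1}$, this identification endows $E^k_\infty(I)$ with a $P_{n-k+1}$-structure in which the unipotent radical $V_{n-k+1}$ acts trivially. Verifying that the isomorphism in Theorem \ref{thm:Prod} is genuinely one of $\fp_{n-k+1}$-modules, and not merely of $\fg_{n-k}$-modules, is arguably the main interpretive point; however, it is built into the way the right-hand side is produced from $E^1_\infty(\chi_i) = \chi_i|_{G_{n_i-1}}$ followed by the BZ product.

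Next, since $\pi$ is a subquotient of $I$ in $\cM_\infty(G_n)$ and $E^k_\infty$ is exact by Theorem \ref{thm:ExactHaus}\eqref{it:Exact}, the module $E^k_\infty(\pi)$ is a subquotient of $E^k_\infty(I)$ in the category of $\fp_{n-k+1}$-modules. Any subquotient of a module on which $\fv_{n-k+1}$ acts trivially also has trivial $\fv_{n-k+1}$-action, so we conclude that $\fv_{n-k+1}$ acts trivially on $E^k_\infty(\pi)$.

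Finally, on any $\fp_{n-k+1}$-module $\tau$ with trivial $\fv_{n-k+1}$-action the quotient $\tau/\fv_{n-k+1}\tau$ coincides with $\tau$, and the inverse limit $\lim_{\leftarrow l} \tau/\fv_{n-k+1}^{\,l}\tau$ is also simply $\tau$. Applied to $\tau = E^k_\infty(\pi)$ this yields
$$D^k_\infty(\pi) = \Psi\bigl(E^k_\infty(\pi)\bigr) = E^k_\infty(\pi) = \Psi_0\bigl(E^k_\infty(\pi)\bigr) = B^k_\infty(\pi),$$
completing the argument. The proof requires no case analysis on $depth(\pi)$: when $depth(\pi) < k$, exactness forces $E^k_\infty(\pi) = 0$ and the chain of equalities reads $0=0=0$, while when $depth(\pi) = k$ the conclusion is the meaningful one. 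I expect no serious obstacles beyond the bookkeeping in paragraph two.
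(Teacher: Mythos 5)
The route you propose — establish that $\fv_{n-k+1}$ acts trivially on $E_{\infty}^k(I)$ and transfer to subquotients — is the right shape, and your last two paragraphs (subquotients inherit trivial $\fv$-action; $\Psi$ and $\Psi_0$ become the identity) are correct. The genuine gap is exactly the step you flag as ``the main interpretive point'' and then wave past. Theorem~\ref{thm:Prod}, as stated, identifies $E_{\infty}^k(I)$ with a $G_{n-k}$-representation; it does not assert that the pre-existing $\fv_{n-k+1}$-action on $E_{\infty}^k(I)$ is trivial, nor that the isomorphism is $\fp_{n-k+1}$-equivariant when the right-hand side is inflated from $G_{n-k}$. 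Saying that this is ``built into the way the right-hand side is produced'' is an appeal to how the proof of Theorem~\ref{thm:Prod} presumably goes, not a deduction from the statement you are allowed to cite. Without it, your second paragraph does not get off the ground: a $\fp_{n-k+1}$-subquotient of $E_{\infty}^k(I)$ could in principle have nontrivial $\fv_{n-k+1}$-action if $E_{\infty}^k(I)$ itself does.

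The paper closes precisely this gap with a Lie-bracket argument that uses only the $\fg_{n-k}$-level content of Theorem~\ref{thm:Prod}. First it invokes Corollary~\ref{cor:SmoothDer} to dispatch the case $\mathrm{depth}(\pi)<k$ (all three derivatives vanish) and to get $E_{\infty}^k(\pi)\cong D_{\infty}^k(\pi)$ when $\mathrm{depth}(\pi)=k$. Then, since $E_{\infty}^k(I)$ is a monomial representation of $G_{n-k}$ and $E_{\infty}^k(\pi)$ is a subquotient of it (by Theorem~\ref{thm:ExactHaus}), the central element $\mathrm{Id}\in\g_{n-k}$ acts by a scalar $a$ on $E_{\infty}^k(\pi)$. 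But $\mathrm{Id}$ is not central in $\fp_{n-k+1}$: one has $[\mathrm{Id},v]=v$ for $v\in\fv_{n-k+1}$, so $avx=\mathrm{Id}\,vx=[\mathrm{Id},v]x+v\,\mathrm{Id}\,x=(a+1)vx$, forcing $vx=0$. That is the missing idea: a scalar action of $\mathrm{Id}$ (which one knows purely at the $\fg_{n-k}$ level) combined with the nontrivial bracket $[\mathrm{Id},\fv_{n-k+1}]=\fv_{n-k+1}$ forces the $\fv_{n-k+1}$-action to vanish, with no need to know equivariance of the isomorphism in Theorem~\ref{thm:Prod}. If you replace your ``interpretive point'' with this bracket computation, the rest of your argument goes through as written.
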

\begin{proof}
If $depth(\pi)<k$ we have $E_{\infty}^k(\pi)=D_{\infty}^k(\pi)=B_{\infty}^k(\pi)=0$. Otherwise, $depth(\pi)=k$,  $E_{\infty}^k(\pi)\cong D_{\infty}^k(\pi)$ and we have to show that $\fv_{n-k+1}E_{\infty}^k(\pi)=0$. Note that all the subquotients of a monomial representation have the same infinitesimal character.
By Theorem \ref{thm:Prod}, $E_{\infty}^k(I)$ is again a monomial representation and by Theorem \ref{thm:ExactHaus} $E_{\infty}^k(\pi)$ is its subquotient. Thus, the identity matrix $Id \in \g_{n-k}$ acts on $E_{\infty}^k(\pi)$ by a scalar, that we denote by $a$. Now, let $x \in E_{\infty}^k(\pi)$ and $v \in \fv_{n-k+1}$. Then $avx=Idvx= [Id,v]x + vIdx=vx+avx=(a+1)vx$. Thus, $vx=0$ for any $v$ and $x$ and thus $\fv_{n-k+1}E_{\infty}^k(\pi)=0$ as required.
\end{proof}

In \RamiA{\S \ref{sec:App}}
%the next section
we demonstrate several applications of Theorem \ref{thm:Prod} to unitary representations.

\Dima{
\subsection{Conjectures and questions}

We conjecture that the following generalization of Theorem \ref{thm:Prod} holds for Bernstein-Zelevinsky product of arbitrary representations.

\begin{conj}\label{conj:Prod}
Let $n=n_1+n_2$ and let $\pi_i \in \cM_{\infty}^{d_i}(G_{n_i})$ for $i=1,2$. Let $\pi:= \pi_1 \times \pi_2$ and $d:=d_1+d_2$, so that $\pi\in \cM_{\infty}^d(G_n)$.
Then %$depth(\pi) = \sum(depth(\pi_i)$ and
\RamiA{
$E_{\infty}^{d}(\pi)=E_{\infty}^{d_1}(\pi_1)  \times E_{\infty}^{d_2}(\pi_2)$.
}
\end{conj}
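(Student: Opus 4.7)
The plan is to mimic the Bernstein--Zelevinsky Leibniz rule for derivatives of products and then collapse the resulting filtration via depth vanishing. In the $p$-adic case, $(\pi_1 \times \pi_2)^{(k)}$ carries a filtration of length $k+1$ whose associated graded is $\bigoplus_{i+j=k} \pi_1^{(i)} \times \pi_2^{(j)}$ (with the convention $\pi_s^{(0)} = \pi_s$), obtained from the Bernstein--Zelevinsky geometric lemma applied to $P_n$-orbits on $G_n/P_{(n_1,n_2)}$. Specializing to $k = d = d_1 + d_2$, depth vanishing (Corollary \ref{cor:SmoothDer}, together with the fact that $E^i_\infty(\pi_s) = 0$ for $i > d_s$) forces every graded piece except the single one indexed by $(i,j) = (d_1, d_2)$ to vanish, giving the conjecture. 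My plan is to carry out this argument in the archimedean smooth moderate-growth category.

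Concretely, the steps would be: (i) stratify the Grassmannian $G_n/P_{(n_1,n_2)}$ into its two $P_n$-orbits---the open one where the $n_1$-plane meets $Fe_n$ trivially, the closed one where it contains $e_n$---and derive from this stratification a short exact sequence
$$0 \to A \to (\pi_1 \times \pi_2)|_{P_n} \to B \to 0$$
in $\cM_\infty(\fp_n)$, with $A$ (respectively, $B$) expressed through appropriately normalized restriction-induction from the stabilizer in $P_n$ of a point of the open (resp.\ closed) orbit; (ii) apply the exact functor $\Phi^{k-1}$ (Theorem \ref{thm:ExactHaus}) and use compatibility of $\Phi$ with induction on the last block to establish, by induction on $k$, that $E^k_\infty(\pi_1 \times \pi_2)$ carries a filtration whose graded pieces are, up to the appropriate modulus twists, precisely $E^i_\infty(\pi_1) \times E^{k-i}_\infty(\pi_2)$ for $0 \le i \le k$; (iii) take $k = d$ and invoke depth vanishing as above, noting that on both sides $E^d_\infty$ equals $D^d_\infty$ by Corollary \ref{cor:SmoothDer} so the identification is really at the level of $G_{n-d}$-representations.

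The main obstacle is making steps (i)--(ii) rigorous in the Casselman--Wallach category. In the $p$-adic setting the geometric lemma is standard at the level of $\ell$-sheaves, but in the smooth Fréchet setting one must ensure that the natural maps between spaces of sections supported on the various strata have closed image, so that the short exact sequence really lives in $\cM_\infty$ rather than merely in the ambient category $\cM(\fp_n)$. This is exactly the sort of Hausdorffness that Theorem \ref{thm:ExactHaus} and \cite{AGS2} are designed to supply, but one would need to extend those results to the somewhat wider class of representations of $\fp_n$ that arise as restrictions of parabolic inductions. A potentially cleaner route is to establish the Leibniz filtration first for admissible $(\fg_n, K_n)$-modules, using associated-variety bounds analogous to the key lemma of Section \ref{sec:Adm} to handle finiteness and closedness algebraically, and then transfer to $\cM_\infty$ via the Casselman--Wallach equivalence (Theorem \ref{thm:CW}) together with the smooth-vs-Harish-Chandra comparison in Corollary \ref{cor:SmoothDer}.
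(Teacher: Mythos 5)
The statement you are attempting to prove is not a theorem in the paper; it is Conjecture~\ref{conj:Prod}, explicitly left open. The authors remark that they believe it can be proved ``using the same geometric argument as in the proof of Theorem~\ref{thm:Prod} (in \cite{AGS2})'' but that the missing ingredient is ``suitable notions of tempered Fr\'echet bundle and the space of its Schwartz sections.'' So no comparison against a paper proof is possible, and the question is only whether your outline closes the gap the authors identify. It does not.

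Your main plan --- an archimedean geometric-lemma filtration of $(\pi_1\times\pi_2)|_{P_n}$ followed by depth vanishing to kill all graded pieces except $(d_1,d_2)$ --- matches what the authors say should work. You also correctly identify the obstruction: the stratification short exact sequence and the subsequent filtration must be established in $\cM_\infty$ (closed images, Hausdorff quotients), not just in $\cM(\fp_n)$. But you leave precisely that obstruction unresolved. Extending the exactness/Hausdorffness machinery of Theorem~\ref{thm:ExactHaus} from the ``monomial'' setting to parabolic inductions of arbitrary representations $\pi_1\otimes\pi_2$ is exactly the Schwartz-sections-of-Fr\'echet-bundles problem the authors flag as missing. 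In particular, for the closed $P_n$-orbit one does not get a plain ``restriction to the closed stratum'' term; the correct contribution in the archimedean setting involves jets/formal expansions transverse to the stratum, and proving that the resulting object lands back in $\cM_\infty$ and that the filtration has closed steps is the whole difficulty, not a routine verification.

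Your proposed fallback route --- running the Leibniz filtration at the Harish--Chandra level and then transferring via the Casselman--Wallach equivalence --- is actually doomed. The remark following Theorem~\ref{thm:Prod} states that the $E_{HC}$ analogue of that theorem is false: for a principal series representation, the space of $E_{HC}^n$ (Whittaker functionals on the Harish-Chandra module) has dimension $n!$, whereas the smooth side has dimension $1$. More generally, by Corollary~\ref{cor:SmoothDer}(\ref{it:onto}) one only has a surjection $E_{HC}^k(HC(\pi))\onto HC(E_\infty^k(\pi))$, typically with large kernel, and this kernel is not controlled by a naive Leibniz filtration. So a filtration of $E^d_{HC}(HC(\pi_1\times\pi_2))$ with graded pieces $E^i_{HC}\times E^{d-i}_{HC}$ (even if established) would overcount, and there is no functorial way to pass to the correct smooth quotient at the end. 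The transfer via Theorem~\ref{thm:CW} cannot rescue this: the Casselman--Wallach equivalence is between $\cM_{HC}(G_{n-d})$ and $\cM_\infty(G_{n-d})$, but $E^d_{HC}(HC(\pi))$ is not in general the Harish--Chandra module of $E^d_\infty(\pi)$. You would need an independent argument that the unwanted part of the HC filtration collapses under the map to $HC(E^d_\infty(\pi))$, which is again the unsolved problem.

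Finally, a smaller point: even granting the first-step stratification by the two $P_n$-orbits on $G_n/P_{(n_1,n_2)}$, iterating $\Phi$ to reach $E^k$ requires controlling orbits of $P_{n-j}$ on increasingly complicated homogeneous spaces at each stage $j<k$, and the ``relevant'' pieces interact with the twisting character $\psi$ nontrivially along the way. Your step (ii) compresses this into ``by induction on $k$,'' but this is where the geometric argument of \cite{AGS2} does real work, and the induction is not a formal consequence of the $k=1$ case.
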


We think that it is possible to prove this conjecture using the same geometric argument as in the proof of Theorem \ref{thm:Prod} (in \cite{AGS2}).
We believe that the main ingredient we miss are suitable notions of tempered \Fre bundle and the space of its Schwartz sections.

Our paper leaves the following open questions:
\begin{enumerate}
\item In Proposition \ref{prop:D3Adm} we show that $B^k$ maps admissible Harish-Chandra modules to admissible, for every $k$. Is the same true on the smooth category $\cM_{\infty}(G_n)$? In other words, is $B^k(\pi)$ Hausdorff for any $\pi \in \cM_{\infty}(G_n)$ and any $k$? We think that the answer is yes and hopefully it can be proven using the methods of  \cite{AGS2}.%\ref{sec:ExactHaus}.
\item Is the depth $B$-derivative functor $B^d: \cM^d_{\infty}(G_n) \to \cM_{\infty}(G_{n-d})$ exact?
\item Is $D^k$ exact for any $k$?
\item Does $B^d$ map irreducible representations of depth $d$ to irreducible ones? This would in particular imply uniqueness of degenerate Whittaker models for all smooth representations (see \S \ref{sec:App}).
\end{enumerate}
}

\section{\Dima{Highest derivatives  of unitary representations and applications}}\label{sec:App}
In this section we prove two results conjectured in \cite{GS}: uniqueness of degenerate Whittaker models and computation of adduced representations for Speh complementary  series. We also prove  that for irreducible unitary representations, the three notions of highest derivative
%agree
coincide
and extend the notion of adduced representation.
We start with preliminaries on Speh representation, Vogan classification, adduced representation, and degenerate Whittaker models.

\subsection{Preliminaries}\label{subsec:UniPrel}$ $

\DimaA{
\begin{notn}
Let $z \in \C,$ and let $\eps \in \Z/2\Z$ if $F=\R$ and $\eps \in \Z$ if $F=\C$.
Denote by $\chi\left(  n,\varepsilon,z\right)  $ the character of $G_n$ given by
\[
x\mapsto\left( \frac{\det x}{|\det x|} \right)  ^{\varepsilon}\left\vert \det
x\right\vert ^{z}.
\]
This
character is unitary if $z$ is imaginary.
\end{notn}
}
%, i.e.$z=it,t\in\mathbb{R}.$.

\subsubsection{Speh representation}

We will use the following description of the Speh representation form \cite{BSS,SaSt}.

\begin{prop}\label{prop:SpehDegPrin}
The Speh representation $\delta\left(  2m,k\right)  $ of $GL_{2m}(\R)$ is a
quotient of
\begin{equation*}
\chi\left(  m,\varepsilon_{k+1},-k/2\right)  \times\chi\left(  m,0,k/2\right) \end{equation*} and a
submodule of
\begin{equation*}
\chi\left(  m,\varepsilon_{k+1},k/2\right)  \times\chi\left(  m,0,-k/2\right)
\end{equation*}
where $k\in\mathbb{N}$ and $\varepsilon_{k+1}\equiv k+1\left(  \operatorname{mod}2\right).$
\end{prop}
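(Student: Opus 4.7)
The plan is to realize $\delta(2m,k)$ as the image of a standard intertwining operator between the two degenerate principal series appearing in the statement. Set
$$I := \chi(m,\eps_{k+1},-k/2) \times \chi(m,0,k/2), \qquad J := \chi(m,\eps_{k+1},k/2) \times \chi(m,0,-k/2),$$
so that the goal is to show $\delta(2m,k)$ is a quotient of $I$ and a submodule of $J$. Note that $I$ and $J$ share the same infinitesimal character, their inducing data being conjugate under the longest element of the relative Weyl group for the maximal parabolic $P_{(m,m)}\subset G_{2m}$. Because $k/2 > -k/2$, the module $J$ is a standard module in the Langlands sense while $I$ has the opposite ordering, and there is a standard intertwining operator $A\colon I \to J$ defined by an integral absolutely convergent in a region of parameters and meromorphically continued, after Knapp-Stein normalization, to this reducibility point.

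The central step is to identify $\Im(A)$ with $\delta(2m,k)$. By general Langlands theory, $\Im(A)$ is the unique irreducible subquotient appearing both as a submodule of $J$ and as a quotient of $I$; to verify it is precisely the Speh representation, one matches infinitesimal characters together with either the minimal $K$-type or the associated variety. In the language of this paper, $AP(\delta(2m,k)) = 2^m$, and this partition singles out the Speh factor among the composition factors of $J$. The sign parameter $\eps_{k+1} \equiv k+1 \pmod{2}$ enters because, for $F=\R$, the central character of $\delta(2m,k)$ carries a sign depending on the parity of $k$, and this sign is read off transparently from the Langlands data of $I$ and $J$; for $F=\C$ the check is similar but simpler since no discrete parity appears.

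Once $\Im(A) \cong \delta(2m,k)$ is established, both conclusions follow from the factorization $I \twoheadrightarrow \Im(A) \hookrightarrow J$. The main obstacle is precisely this identification: at the reducibility point in question, $J$ may have several composition factors, and one must argue that $\Im(A)$ is the Speh factor rather than some other irreducible subquotient. This delicate composition-series analysis, which in particular determines the Jordan-H\"older content of $I$ and $J$ in terms of $m$ and $k$ and isolates the Speh constituent, was carried out explicitly in \cite{BSS, SaSt}; the plan is to appeal to those references for it while framing the rest of the argument through the intertwining-operator picture described above.
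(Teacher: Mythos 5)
The paper does not actually prove this proposition; it states it as a fact imported from \cite{BSS} and \cite{SaSt} (``We will use the following description of the Speh representation from \cite{BSS} and \cite{SaSt}''). Your proposal ultimately does the same thing: the intertwining-operator framework is a plausible and standard way to organize the argument, but the substantive step (isolating the Speh constituent in the composition series of $J$ and identifying it with $\Im(A)$) is, as you say yourself, delegated to the same references. So the route is essentially the paper's route, with added scaffolding.

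One imprecision worth flagging in that scaffolding: you write that ``by general Langlands theory, $\Im(A)$ is the unique irreducible subquotient appearing both as a submodule of $J$ and as a quotient of $I$.'' Langlands theory by itself gives a unique irreducible \emph{quotient} of the standard module $J$ (equivalently, unique irreducible \emph{submodule} of $I$) — that is the Langlands quotient, which sits at the ``opposite'' end from the Speh factor. It does \emph{not} directly deliver uniqueness of an irreducible factor that is simultaneously a quotient of $I$ and a submodule of $J$, nor does it guarantee $\Im(A)$ (in your direction $I\to J$, reverse to the standard one) is irreducible. These require knowing the socle of $J$ and cosocle of $I$ are simple, which in this paper is established via Lemma \ref{lem:UnSubq} and Proposition \ref{prop:MonIrr} (using the annihilator-variety machinery developed later), and in \cite{BSS,SaSt} by explicit composition-series analysis. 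Since you appeal to \cite{BSS,SaSt} for exactly this point, the gap is filled — but the attribution to ``general Langlands theory'' should be removed or replaced by a citation to the structural results used.
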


\DimaA{
\begin{rem}
By Lemma \ref{lem:UnSubq} and Proposition \ref{prop:MonIrr} below, the Speh representation $\delta\left(  2m,k\right)  $ is
the unique irreducible
submodule of
$
\chi\left(  m,\varepsilon_{k+1},k/2\right)  \times\chi\left(  m,0,-k/2\right)
$
and thus also the unique irreducible
quotient of
$
\chi\left(  m,\varepsilon_{k+1},-k/2\right)  \times\chi\left(  m,0,k/2\right) $.
\end{rem}
}
\subsubsection{Vogan Classification} \label{subsubsec:VogClas}

By the Vogan classification \cite{Vog-class}, irreducible unitary
representations of $G_n$ are BZ
products of the form
\[
\pi=\pi_{1}\times\cdots\times\pi_{k}%
\]
where each $\pi_{i}$ is one of the following basic unitary representations:

\begin{enumerate}[(a)]
\item \label{it:Char}\emph{A one-dimensional unitary character of some }$G_m$. %Such a character is of the form
%\[
%x\mapsto\left( \frac{\det x}{|\det x|} \right)  ^{\varepsilon}\left\vert \det
%x\right\vert ^{z},\varepsilon\in\left\{  0,1\right\}  ,z\in\mathbb{C}%
%\]
%and we shall denote it by $\chi\left(  m,\varepsilon,z\right)  $. This
%character is unitary if $z$ is imaginary, i.e.
%\[
%z=it,t\in\mathbb{R}.
%\]

\item \label{it:StCS}\emph{A Stein complementary series representation of some }$G_{2m}$, twisted by a unitary character. The Stein
representations are complementary series of the form%
\[
\sigma\left(  2m,s\right)  =\chi\left(  m,0,s\right)  \times\chi\left(
m,0,-s\right), s\in\left(  0,1/2\right)
\]
and we write $\sigma\left(  2m,s;\varepsilon,it\right)  $ to denote its twist
by$\chi\left(  2m,\varepsilon,it\right)  $.

\item \label{it:Speh} \emph{A Speh representation} $\delta(2m,k)$ of  $G_{2m}$,
twisted by a unitary character.
We write $\delta\left(  2m,k;it\right)  $ to denote its twist by
$\chi\left(  2m,0,it\right)  $.

\item  \label{it:SpehCS} \emph{A Speh complementary series representation of some }$G_{4m}$, twisted by a unitary character. The Speh
complementary series representation is%
\[
\Delta\left(  4m,k,s\right)  =\delta\left(  2m,k;0,s\right)  \times \delta \left(
2m,k;0,-s\right), s\in\left(  0,1/2\right)
\]
and we write $\Delta\left(  4m,k,s;it\right)  $ to denote its twist by
$\chi\left(  4m,0,it\right)  $.
\end{enumerate}

% The building blocs \eqref{it:Speh} and \eqref{it:SpehCS} appear only for $F=\R$.
It is known that the associated partition of $\delta(  2m,k)$ is $2^m$ and (thus) the associated partition of $\Delta\left(  4m,k,s\right)$ is $4^m$ (see e.g. \cite[Theorem 4.2.1]{GS}).

If $F=\C$ then only types (\ref{it:Char}) and (\ref{it:StCS}) appear in the Vogan classification. Thus, every $\tau \in \widehat{GL(n,\C)}$ is a product of (not necessary unitary) characters. For $F=\R$ this is not true, but the above information implies the following result.
\begin{cor}[\cite{GS}, Corollary 4.2.5]\label{cor:UniDegPrin}
Let $\tau \in \widehat{G_n}$, let $\lambda$ be the associated partition of $\tau$ and $\mu = (n_1 , \dots ,  n_k)$ be the transposed partition. Then there exist (not necessarily unitary) characters $\chi_i,\chi'_i$ of $G_{n_i}$ for all $1 \leq i \leq k$, an epimorphism  $\chi_1 \times \cdots \times \chi_k \onto \tau^{\infty}$ and an embedding $\tau^{\infty} \hookrightarrow \chi'_1 \times \cdots \times \chi'_k $. Moreover, $\cV(\chi_1 \times \cdots \times \chi_k) = \cV(\chi'_1 \times \cdots \times \chi'_k) = \cV(\tau^{\infty})$.
\end{cor}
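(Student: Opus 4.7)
The plan is to use the Vogan classification to decompose $\tau^{\infty}$ into basic unitary pieces, realize each piece as a quotient (and dually as a subrepresentation) of a BZ product of characters via Proposition~\ref{prop:SpehDegPrin}, and then concatenate using exactness of parabolic induction. By the classification recalled in \S\S\S\ref{subsubsec:VogClas}, write $\tau^{\infty}\cong \pi_1\times\cdots\times\pi_r$ with each $\pi_i$ of one of the four basic types (a)--(d). The crucial numerical input is that each basic unitary representation has a rectangular associated partition, namely $\lambda_i = 1^{a_i}$, $2^{m_i}$, $2^{m_i}$, or $4^{m_i}$ in the four cases respectively, so $\mu_i := \lambda_i^t$ consists of equal parts. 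One then realizes each $\pi_i$ as a quotient (resp.\ a subrepresentation) of a BZ product of $|\mu_i|$ characters of $G_{m_i}$: case (a) is trivial; case (b) is the definition $\sigma(2m,s) = \chi(m,0,s)\times\chi(m,0,-s)$ (with an overall unitary twist in the general case); case (c) is exactly Proposition~\ref{prop:SpehDegPrin}; and case (d) reduces to case (c) applied to each Speh factor in $\Delta(4m,k,s) = \delta(2m,k;0,s)\times\delta(2m,k;0,-s)$.

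Next, I would concatenate these per-factor maps using associativity of parabolic induction together with its right exactness (for quotients) and left exactness (for subrepresentations), obtaining
\[
\chi_1\times\cdots\times\chi_k \onto \tau^{\infty}
\qquad\text{and}\qquad
\tau^{\infty}\hookrightarrow \chi_1'\times\cdots\times\chi_k',
\]
where the sequence of sizes $(n_1,\dots,n_k)$ is the concatenation of the tuples $\mu_i$. The BZ product of irreducible unitary representations is commutative as an abstract representation (a standard consequence of the Vogan classification), so the factors $\pi_i$ may be reordered freely, placing $(n_1,\dots,n_k)$ into the decreasing order prescribed by~$\mu$. That the resulting tuple really is $\mu = \lambda^t$ amounts to additivity of $\mu$ under BZ product, which follows from Theorem~\ref{thm:AnnVarProd} together with the Lusztig--Spaltenstein/Richardson description of induced nilpotent orbits in $\gl_n$.

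The same ingredients give the associated-variety equality: since $\cV(\chi_j) = \{0\}$ for each character, Theorem~\ref{thm:AnnVarProd} yields $\cV(\chi_1\times\cdots\times\chi_k) = G_{n,\C}\cdot r^{-1}(\{0\})$, where $r:\g_n^{*}\to\fp_\mu^{*}$; for $\GL_n$ this is the closure of the Richardson orbit of $P_\mu$, which has Jordan type $\mu^t = \lambda$, so the right hand side equals $\overline{\cO_\lambda} = \cV(\tau^{\infty})$, and likewise for the primed factors. The one nontrivial input beyond bookkeeping is the Richardson/Lusztig--Spaltenstein identification of the induced orbit from $\{0\}$ via $P_\mu$ with $\cO_{\mu^t}$; I expect this to be the main substantive step, though it is a classical fact for $\GL_n$.
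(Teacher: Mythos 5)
Your proof is correct and uses exactly the ingredients the paper assembles for this purpose: the Vogan classification of \S\S\S\ref{subsubsec:VogClas}, Proposition~\ref{prop:SpehDegPrin} for the Speh factors, exactness of parabolic induction, and Theorem~\ref{thm:AnnVarProd} together with the Richardson-orbit computation for the annihilator-variety equality. The paper only cites \cite[Corollary 4.2.5]{GS} rather than re-proving it, but your reconstruction is the natural route through those tools, and the combinatorial fact you invoke as ``additivity of $\mu$ under BZ product'' reduces to the elementary (and correct) identity that $(\lambda_1+\lambda_2)^t$ equals the sorted union of the multisets $\lambda_1^t$ and $\lambda_2^t$.
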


\subsubsection{Adduced representation}\label{subsubsec:PrelA}

Adduced representation was defined in \cite{Sahi-Kirillov} for irreducible unitary representations. The definition is based on the fact that an irreducible unitary representation of $G_n$ remains irreducible (as a unitary representation) after restriction to $P_n$ (see \cite{Bar}). The adduced representation of $\tau \in \widehat{G_n}$, denoted $A\tau$
is defined to be the irreducible unitary representation of $G_{n-d}$ that gives rise to $\tau|_{P_n}$ by Mackey induction. The number $d$ is called the depth of $\tau$. By Theorem \ref{thm:GSMain} below, $depth(\tau)=depth(\tau^{\infty})$.

Clearly, $A\chi = \chi|_{G_{n-1}}$ for any unitary character $\chi$. It is also clear that $A$ ``commutes" with a twist by a unitary character.
In \cite{Sahi-Kirillov}, it was proven that $A$ is a multiplicative operation (an analog of Conjecture \ref{conj:Prod}).
In \cite{Sahi-PAMS}, it was shown that $A\sigma(2m,s) = \sigma(2m-2,s)$, which completed the computation of adduced representations for $F=\C$. In \cite{SaSt} it was shown that $A\delta(2m,k)=\delta(2m-2,k)$. Finally, in \cite{GS} it was shown, using the Vogan classification, that $A\Delta(4m,k,s) = \Delta(4m-4,k,s)$ if $k \neq m$ and conjectured that the condition $k \neq m$ is not necessary.
Of course, this conjecture immediately follows from Conjecture \ref{conj:Prod} and the fact that $A\delta(2m,k)=\delta(2m-2,k)$. However, in this section we prove that $A\Delta(4m,k,s) = \Delta(4m-4,k,s)$ for all $k$ and $m$ in a different way (without using the Vogan classification). This completes the computation of adduced representations for all irreducible unitary  representations of $G_n$.

\Dima{We also prove that $\RamiA{(A\tau)^{\infty}} \cong B^d(\tau^{\infty})\cong D^d(\tau^{\infty})\cong E^d(\tau^{\infty})$.}
%We will use the main result of \cite{GS}. To formulate it fully, we will need the notion of degenerate Whittaker model.

The following lemma follows from the definition of adduced representation and Frobenius reciprocity. We refer the reader to Notation \ref{not:main} for the definitions of the groups $S_{n}^d$ and $U_n^d$.
\begin{lem}[\cite{GS}, Proposition 3.1.2]
\label{lem:pi2Api}  Let $\tau\in\widehat{G_{n}}$, and let $d:=depth(\tau)$.
Extend the action of $G_{n-d}$ on $(A\tau)^{\infty}$ to an action of $S_{n}^d$ by letting $U_n^d$ act by the character $\psi^d$.

Then
there exists an $S_{n}^d$-equivariant map from $\tau^{\infty}$ to
$(A\tau)^{\infty}\otimes|\det|^{(d-1)/2}$ with dense image.
%, where $\,\widetilde{\, }$ denotes the twist by the character $|\det|^{(d-1)/2}$ of $S_{n-d,d}$.
\end{lem}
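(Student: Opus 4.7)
The plan is to invoke the Mackey-theoretic definition of $A\tau$, pass from the unitary realization to the space of smooth vectors, and take the evaluation-at-the-identity map. By the definition of the adduced representation recalled in \S\S\S\ref{subsubsec:PrelA}, the restriction $\tau|_{P_n}$ is unitarily equivalent to the normalized unitary induction from $S_n^d$ to $P_n$ of the irreducible unitary representation $(A\tau)\boxtimes \psi^d$ of $S_n^d = G_{n-d} U_n^d$, in which $G_{n-d}$ acts by $A\tau$ and $U_n^d$ by $\psi^d$. Passing to smooth vectors, $\tau^\infty$ is identified, as a $P_n$-representation, with the space of smooth functions $f: P_n \to (A\tau)^\infty$ satisfying
\[
f(sg) = \bigl(\Delta_{S_n^d}(s)/\Delta_{P_n}(s)\bigr)^{1/2}(A\tau)(g_0)\psi^d(u) f(g), \quad s = g_0 u \in G_{n-d} U_n^d,\ g \in P_n,
\]
with $P_n$ acting on $f$ by right translation.

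The next step is to define $\phi: \tau^\infty \to (A\tau)^\infty$ by $\phi(f) := f(e)$. The equivariance relation applied at $g = e$ with $s$ ranging over $G_{n-d}$ shows that $g \mapsto (A\tau)(g) \phi(f)$ is smooth in $g$, so $\phi(f)$ indeed lies in the smooth vectors of $A\tau$. The $S_n^d$-equivariance then follows at once from
\[
\phi(\tau(s)f) = f(s) = \bigl(\Delta_{S_n^d}(s)/\Delta_{P_n}(s)\bigr)^{1/2}(A\tau \boxtimes \psi^d)(s)\phi(f).
\]
A direct computation of the two modulus characters, via the adjoint action of $G_{n-d}$ on $V_n$ and on $U_n^d$ respectively, gives $\Delta_{P_n}(g) = |\det g|$ and $\Delta_{S_n^d}(g) = |\det g|^d$ for $g \in G_{n-d}$; thus the twist on $G_{n-d}$ is exactly $|\det|^{(d-1)/2}$, as required.

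For density of the image, $\Im\phi$ is a $G_{n-d}$-invariant subspace of the irreducible Casselman-Wallach representation $(A\tau)^\infty \otimes |\det|^{(d-1)/2}$ (irreducibility follows since $A\tau$ is an irreducible unitary representation of $G_{n-d}$). To produce a nonzero image vector, given any $\xi \in (A\tau)^\infty \setminus \{0\}$, I would extend $\xi$ to a smooth section $f$ by choosing a smooth local cross-section of $P_n/S_n^d$ through $e$, setting $f(e) = \xi$ against a bump function, and propagating by the defining equivariance, yielding $f \in \tau^\infty$ with $\phi(f) = \xi$. Then the nonzero $G_{n-d}$-invariant subspace $\Im\phi$ contains all $K_{n-d}$-finite vectors, which are dense in $(A\tau)^\infty$ in its Casselman-Wallach topology, proving density. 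The main technical subtlety, and the step where care is needed, is verifying smoothness of the extended section across the boundary of its support; this is a standard issue in the theory of induced representations of Lie groups.
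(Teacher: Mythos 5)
The lemma is not proven in the paper itself; the authors cite \cite{GS}, Proposition~3.1.2, and indicate only that it ``follows from the definition of adduced representation and Frobenius reciprocity.'' Your proposal fleshes out exactly this route --- the Mackey realization of $\tau|_{P_n}$ as unitary induction from $S_n^d$, followed by the Frobenius reciprocity/evaluation-at-$e$ map --- and your computation of the modular twist $(\Delta_{S_n^d}/\Delta_{P_n})^{1/2} = |\det|^{(d-1)/2}$ on $G_{n-d}$ is correct. So the overall strategy matches the intended argument.

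However, there is a gap in your nonvanishing step. To exhibit a nonzero vector in $\Im\phi$ you build a section $f$ by a bump function against a local cross-section of $P_n/S_n^d$ and claim ``$f\in\tau^\infty$.'' This construction only produces a function that is smooth for the $P_n$-action (i.e.\ a vector in the smooth induced model of $\tau|_{P_n}$); it does not show that $f$ is smooth for the full $G_n$-action, which is what membership in $\tau^\infty$ requires. Since $\tau^\infty$ is in general a proper subspace of the $P_n$-smooth vectors, your $f$ need not lie in the domain of $\phi$. The correct and simpler nonvanishing argument avoids this: if $\phi(\tau^\infty)=0$, then $f(e)=0$ for all $f\in\tau^\infty$, and since $\tau^\infty$ is $P_n$-stable, $f(p)=(\tau(p)f)(e)=0$ for every $p\in P_n$ and $f\in\tau^\infty$, forcing $\tau^\infty=0$, a contradiction. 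A second, minor issue is the claim that $\Im\phi$ ``contains all $K_{n-d}$-finite vectors''; this is not justified (the image is not shown to be closed), but it is also unnecessary --- once $\Im\phi$ is a nonzero $G_{n-d}$-invariant subspace, its closure is a nonzero closed invariant subspace of the topologically irreducible $(A\tau)^\infty\otimes|\det|^{(d-1)/2}$, hence everything, which gives density directly.
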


\begin{cor}\label{cor:D3OntoA}
Let $\tau\in\widehat{G_{n}}$, and let $d:=depth(\tau)$. Then there is a natural epimorphism $B_{\infty}^{d}(\tau^{\infty})\onto (A\tau)^{\infty}$.
\end{cor}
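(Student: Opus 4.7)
The plan is to push the dense-image map furnished by Lemma \ref{lem:pi2Api} successively through the quotients that define $B^d(\tau^\infty)$.

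I would begin with the $S_n^d$-equivariant map $\alpha: \tau^\infty \to (A\tau)^\infty \otimes |\det|^{(d-1)/2}$ with dense image. Since $U_n^{d-1}$ sits inside $U_n^d$ and acts on the right-hand side by $\psi^{d-1} = \psi^d|_{U_n^{d-1}}$, the $(\fu^{d-1},\psi^{d-1})$-equivariance of $\alpha$ forces it to kill the corresponding coequivariance relations. After cancelling the $|\det|^{(d-1)/2}$-twist this produces a continuous $G_{n-d}$-equivariant map $\tilde\alpha: E^d(\tau^\infty) \to (A\tau)^\infty$ with dense image.

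Next I would factor $\tilde\alpha$ through $\Psi_0$, i.e.\ through $\fv_{n-d+1}$-coinvariants. The key observation here is that for the extended $S_n^d$-action on $(A\tau)^\infty$ to be well-defined the character of $U_n^d$ used in Lemma \ref{lem:pi2Api} must be $G_{n-d}$-invariant, since $G_{n-d}$ already acts on $(A\tau)^\infty$ and normalizes $U_n^d$. Because $G_{n-d}$ acts non-trivially by conjugation on $\fv_{n-d+1}$, this $G_{n-d}$-invariance forces the character to be trivial on $\fv_{n-d+1}$. Consequently $\tilde\alpha$ annihilates $\fv_{n-d+1}\cdot E^d(\tau^\infty)$ and descends to a natural continuous $G_{n-d}$-equivariant map $\beta : B^d(\tau^\infty) \to (A\tau)^\infty$.

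Finally I would conclude surjectivity using Casselman-Wallach formalism. The corollary preceding the statement places $B^d(\tau^\infty)$ in $\cM_\infty(G_{n-d})$, and $(A\tau)^\infty$ lies in the same category. Hence $\beta$ is a morphism in $\cM_\infty(G_{n-d})$, so by Corollary \ref{cor:CW}(\ref{it:ClosIm}) its image is closed. Since $\beta$ inherits dense image from $\tilde\alpha$, closed plus dense yields surjectivity.

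The most delicate step is the second one: explicitly extracting from the conventions of Lemma \ref{lem:pi2Api} that the extending character is trivial on $\fv_{n-d+1}$. This is internally forced by consistency with Corollary \ref{cor:HCNilp} — if the character were non-trivial on $\fv_{n-d+1}$, then nilpotency of the $\fv_{n-d+1}$-action on $E^d(\tau^\infty)$ combined with equivariance would force $\tilde\alpha$ to vanish, contradicting density — but it nevertheless deserves a clean justification via the normalization condition on the Mackey extension.
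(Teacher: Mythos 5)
Your proof is correct and follows the same approach as the paper: the dense-image map from Lemma \ref{lem:pi2Api} is twisted, observed to factor through $B^d_\infty(\tau^\infty)$ by $S_n^d$-equivariance, and Casselman--Wallach (Corollary \ref{cor:CW}) upgrades dense image to surjectivity. Your added observation that $G_{n-d}$-invariance of the extending $U_n^d$-character forces it to vanish on the standard-representation factor $\fv_{n-d+1}$ is a useful clarification which the paper compresses into the single phrase ``since it is $S_n^d$-equivariant, it factors through $B^d_\infty$,'' and which reconciles the statement of Lemma \ref{lem:pi2Api} with the seemingly non-degenerate character $\psi_n^d$ of Notation \ref{not:main}.
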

\begin{proof}
Twisting the map from the previous lemma by $|\det|^{-(d-1)/2}$ we obtain a map $\tau^{\infty} \otimes |\det|^{-(d-1)/2} \to (A\tau)^{\infty}$. Since it is $S_{n}^d$-equivariant, it factors through $B_{\infty}^{d}(\tau^{\infty})$. Thus we have a map $B_{\infty}^{d}(\tau^{\infty})\to (A\tau)^{\infty}$ with dense image. By the Casselman-Wallach theorem (see Corollary \ref{cor:CW}) the image is closed and hence this is an epimorphism.
\end{proof}

\subsubsection{Degenerate Whittaker models} \label{subsubsec:DegWhit}

For a composition $\lambda = (n_1 , \dots ,  n_k)$ of $n$, let $J_{\lambda}$ denote the corresponding upper triangular matrix consisting of Jordan blocks of sizes $n_1 , \dots ,  n_k$. Let $w_0J_\lambda w_0^{-1}$ be the conjugation of  $J_{\lambda}$ by the longest Weyl group element $w_0$. Let $\psi_{\lambda}$ denote the character of $\fn$ given by $$\psi_{\lambda}(X):=\sqrt{-1}\pi \re (\tr(Xw_0J_{\lambda}w_0^{-1}))=d\theta(\tr(Xw_0J_{\lambda}w_0^{-1}))$$
By abuse of notation, we denote the corresponding character of $N$ also by $\psi_{\lambda}$.

\begin{defn} Let $\pi \in \cM_{\infty}(G_n)$ and $\lambda$ be a composition of $n$.
\begin{itemize}
\item  Denote $Wh_{\lambda}^*(\pi):=\Hom_{N}(\pi, \psi_{\lambda})$ and for $\tau \in \widehat{G_n}$ denote $Wh_{\lambda}^*(\tau):=Wh_{\lambda}^*(\tau^{\infty})$.

\item Denote $E^{\lambda}(\pi):=E^{n_k}(\cdots(E^{n_1}(\pi)|_{\p_{n-n_1}})\cdots)$ and  $B^{\lambda}(\pi):=B^{n_k}(\cdots(B^{n_1}(\pi)|_{\p_{n-n_1}})\cdots)$.

\end{itemize}
\end{defn}

The following lemma is obvious.

\begin{lem}$ $
\begin{enumerate}
\item $Wh^*_{\lambda}(\pi) \cong (B^\lambda(\pi))^*$
\item We have a natural epimorphism $E^\lambda(\pi) \onto B^\lambda(\pi)$.
\end{enumerate}
\end{lem}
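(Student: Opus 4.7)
The plan is a direct unfolding of the iterated definitions on both sides.

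For part (2), observe that by definition $B^{n_i}(X) = \Psi_0(E^{n_i}(X))$ is a further quotient of $E^{n_i}(X)$, namely the quotient by the action of the column $\fv_{n-n_1-\cdots-n_i+1}$. This yields a natural surjection $E^{n_i}(X) \onto B^{n_i}(X)$ for every $\fp$-module $X$. Since each $E^{n_i}$ is a functor that preserves surjections (it is right exact, being defined by coequivariants), iterating these surjections through the composition defining $E^\lambda$ versus $B^\lambda$ yields the desired natural epimorphism $E^\lambda(\pi) \onto B^\lambda(\pi)$.

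For part (1), I would show by induction on the length of $\lambda$ that $B^\lambda(\pi)$ is naturally isomorphic, as a vector space, to $\pi_{N, \psi_\lambda}$. The $|\det|^{-\ast}$ twists introduced by the successive $E^{n_i}$'s act on the final $G_0$-module and so contribute nothing to the underlying vector space. Tracing through the definition, each $B^{n_i}$ contributes to the quotient the coinvariants with respect to the columns $\fv_{n-n_1-\cdots-n_{i}+1}, \ldots, \fv_{n-n_1-\cdots-n_{i-1}}$, with a character picking out the super-diagonal entries strictly inside the $i$-th block (and trivial on the leftmost column of that block). Summing over $i$, the total coinvariants subgroup is $\fn_n = N$ and the total character matches $\psi_\lambda = \tr(- \cdot w_0 J_\lambda w_0^{-1})$. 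Dualizing then gives
$$(B^\lambda(\pi))^* \cong (\pi_{N, \psi_\lambda})^* \cong \Hom_N(\pi, \psi_\lambda) = Wh^*_\lambda(\pi).$$

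Both parts are routine. The only mildly delicate step is the convention-matching in (1): one must verify that $\psi_\lambda$ from the $w_0 J_\lambda w_0^{-1}$ definition really equals the character built iteratively by $B^\lambda$. This follows once one observes that the blocks of $\lambda$ appear in $J_\lambda$ ordered from top-left to bottom-right, whereas the iterated construction of $B^\lambda$ produces them from bottom-right upward; after the $w_0$-conjugation both conventions yield the same character, consisting of the super-diagonal entries internal to each block.
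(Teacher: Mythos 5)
The paper gives no proof (it calls the lemma ``obvious''), and your unfolding of the definitions is exactly the intended argument: the surjection $E^{n_i}\onto B^{n_i}$ is preserved under the right-exact coequivariants functors, and tracing the iterated characters shows $B^\lambda(\pi)\cong\pi_{\fn,\psi_\lambda}$ as a vector space (the $|\det|$-twists being irrelevant both because the final object is a $G_0$-module and because $\det$ is trivial on $N$), after which $(\pi_{\fn,\psi_\lambda})^*\cong\Hom_N(\pi,\psi_\lambda)$ is standard. Your bookkeeping of the superdiagonal positions and the role of $w_0$-conjugation in matching $\psi_\lambda$ with the iterated $B$-character is correct.
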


%In \cite{GS} $Wh_{\lambda}^*$ is defined using $N$, so it could be even smaller. However, the below result implies equality in our case. So does an unpublished result of Casselman.

We will use the main result of \cite{GS}.

\begin{thm}[\cite{GS}, Theorem A]\label{thm:GSMain}
Let $\tau\in\widehat{G_{n}}$ and let $\lambda=(n_1 , \dots ,  n_k)$ be the associated partition of $\tau$.
%partition of $n$ such that $\cV(\tau) = \overline{\cO_{\lambda}}$.
Then
\begin{enumerate}

\item $Wh_{\lambda
}^{\ast}\left(  \tau\right)   \neq 0$.

\item $depth(\tau) = n_1$ and the associated partition of $A\tau$ is $(n_2 , \dots ,  n_k)$.
\end{enumerate}
\end{thm}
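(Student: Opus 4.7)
The claim $depth(\tau) = n_1$ is immediate from the definition: since $\cV(\tau) = \overline{\cO_\lambda}$, a generic element has Jordan type $\lambda$ with largest block $n_1$, and by upper semicontinuity every element of the closure also satisfies $A^{n_1} = 0$. The rest of the theorem I would treat together by induction on $n_1 = depth(\tau)$, with the trivial finite-dimensional base case when $n_1 = 1$.

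For the upper bound $AP(A\tau) \leq (n_2, \dots, n_k)$ in dominance order, I would use the epimorphism $\chi_1 \times \cdots \times \chi_{n_1} \twoheadrightarrow \tau^\infty$ supplied by Corollary \ref{cor:UniDegPrin}, where the $\chi_i$ are characters of $G_{\mu_i}$ with $\mu = \lambda^T$ (so the number of factors is $\lambda_1 = n_1$). Theorem \ref{thm:Prod} identifies $E^{n_1}_\infty$ of this monomial representation as $(\chi_1)|_{G_{\mu_1 - 1}} \times \cdots \times (\chi_{n_1})|_{G_{\mu_{n_1} - 1}}$, whose associated partition is $(n_2, \dots, n_k)$---that is, $\lambda$ with its first column removed---by Theorem \ref{thm:AnnVarProd}. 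Exactness of $E^{n_1}_\infty$ (Theorem \ref{thm:ExactHaus}), Lemma \ref{AnnVarBE}, and the surjection $B^{n_1}_\infty(\tau^\infty) \twoheadrightarrow (A\tau)^\infty$ from Corollary \ref{cor:D3OntoA} then deliver the desired upper bound on $\cV(A\tau)$.

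Once $AP(A\tau) = (n_2, \dots, n_k)$ is in hand, the induction hypothesis supplies a non-zero degenerate Whittaker functional of type $(n_2, \dots, n_k)$ on $A\tau$; pulling it back along the $S_n^{n_1}$-equivariant dense map of Lemma \ref{lem:pi2Api} produces an element of $Wh^*_\lambda(\tau)$. The main obstacle is closing the loop: the upper-bound argument does not by itself rule out a strict inequality in dominance order, so the induction needs the matching lower bound on $\cV(A\tau)$ before it can yield the Whittaker functional. Breaking this circularity requires additional input; a clean route is to verify non-vanishing of $Wh^*_\lambda$ directly on the basic unitary building blocks of the Vogan classification (characters, Stein and Speh series, and Speh complementary series), where the associated orbit is known explicitly, and then to invoke multiplicativity of degenerate Whittaker functionals under Bernstein-Zelevinsky products---thereby sidestepping the need for an a priori lower bound on $\cV(A\tau)$.
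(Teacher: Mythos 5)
This theorem is not proven in the present paper: it is quoted verbatim from \cite{GS} (Theorem A there), and the paper simply uses it as an input to prove Theorem~\ref{thm:GSMult1} and Theorem~\ref{thm:ASCS}. So there is no ``paper's own proof'' to compare you against; what you are really being asked to do is reconstruct (or give a fresh proof of) a result from an earlier paper using the machinery now available.

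With that understood, your reconstruction has a genuine gap that you yourself flag, and the flag is accurate. The upper-bound half is fine: $\chi_1\times\cdots\times\chi_{n_1}\onto\tau^\infty$ from Corollary~\ref{cor:UniDegPrin}, exactness of $E^{n_1}_\infty$, Theorem~\ref{thm:Prod}, Lemma~\ref{AnnVarBE}, and Corollary~\ref{cor:D3OntoA} together give $\cV((A\tau)^\infty)\subset\overline{\cO_{(n_2,\dots,n_k)}}$, hence $AP(A\tau)\leq(n_2,\dots,n_k)$ in dominance order. But for the lower bound and the non-vanishing of $Wh^*_\lambda(\tau)$, what you offer is not a proof but a plan: ``verify non-vanishing on the Vogan building blocks and invoke multiplicativity of degenerate Whittaker functionals under BZ products.'' That multiplicativity is precisely the nontrivial content one would need to establish (it is closely related to Conjecture~\ref{conj:Prod} of this paper and to the main computations of \cite{GS}), so as written the argument is circular or at best incomplete. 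Two smaller points: the induction should run on $n$ (or on $n-n_1$, the size of $G$ supporting $A\tau$), not on $n_1=depth(\tau)$, since $depth(A\tau)=n_2$ can equal $n_1$; and the pull-back step via Lemma~\ref{lem:pi2Api} gives an $(N_{n-d}U_n^d,\,\psi_{(n_2,\dots,n_k)}\otimes\psi^d)$-equivariant functional, which is a conjugate of $\psi_\lambda$ rather than $\psi_\lambda$ itself --- harmless but worth saying explicitly. In short: the structure is sound and the gap is correctly identified, but the proposed patch is essentially a restatement of what \cite{GS} proves, not a replacement for it.
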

We will show that $\dim Wh_{\lambda
}^{\ast}\left(  \tau\right)=1$ (see Theorem \ref{thm:GSMult1}).

\subsection{Applications}\label{subsec:UniApp}

We will use the following immediate corollary of Theorem \ref{thm:Prod}.

\begin{cor}\label{cor:DlamProd}
Let $\alp=(n_1 , \dots ,  n_k)$ be a composition of $n$, $\lam$ be the partition obtained by reordering of $\alp$ and $\mu$ be the transposed partition of $\lam$. Let $\chi_i$ be characters of $G_{n_i}$ for $1 \leq i \leq k$. Then $$\dim E^{\mu}(\chi_1 \times  \cdots \times \chi_k) = 1$$
\end{cor}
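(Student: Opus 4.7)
The plan is to proceed by induction on $\lam_1$, the largest part of $\lam$ (equivalently, the length of $\mu$). The base case $\lam_1=0$ is trivial: then $n=0$, the partition $\mu$ is empty, and $E^\mu$ is the identity on the one-dimensional trivial representation of $G_0$. (Alternatively, one may start the induction at $\lam_1=1$, where all $n_i=1$, $\mu=(k)$, and Theorem \ref{thm:Prod} directly produces a BZ product of trivial representations of $G_0$.)

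For the inductive step, set $\pi:=\chi_1\times\cdots\times\chi_k$. Each $\chi_i$ is a character, hence of depth one, so Corollary \ref{cor:DepthProd} gives $depth(\pi)=k$. The number $\mu_1$ is the number of parts of $\lam$, which is also $k$, so Theorem \ref{thm:Prod} applies and yields
\[
E^{\mu_1}(\pi)\cong \chi_1|_{G_{n_1-1}}\times\cdots\times\chi_k|_{G_{n_k-1}}.
\]
Factors with $n_i=1$ are trivial representations of $G_0$ and are absorbed in the BZ product. What remains is a BZ product of characters indexed by $\{i : n_i\geq 2\}$, whose underlying composition reorders to the partition $\lam'$ obtained from $\lam$ by subtracting one from each nonzero part. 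By the standard combinatorics of Young diagrams (stripping the first row of the conjugate diagram), the transpose of $\lam'$ equals $(\mu_2,\mu_3,\dots,\mu_{\lam_1})$.

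Applying the inductive hypothesis of the corollary to the BZ product of characters $E^{\mu_1}(\pi)$, whose reordered partition has transpose $(\mu_2,\dots,\mu_{\lam_1})$, then gives
\[
\dim E^{(\mu_2,\dots,\mu_{\lam_1})}(E^{\mu_1}(\pi)) = 1,
\]
which by the definition of $E^\mu$ is exactly $\dim E^\mu(\pi)=1$, completing the induction. I expect no significant obstacle: the argument is a direct iteration of Theorem \ref{thm:Prod}, and the only bookkeeping is the elementary combinatorial observation above, together with the fact that at each stage the intermediate representation is again a BZ product of characters, so that Theorem \ref{thm:Prod} (and hence the inductive step) applies cleanly.
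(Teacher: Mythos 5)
Your proof is correct and is precisely the induction the paper has in mind when it calls the corollary ``immediate'' from Theorem \ref{thm:Prod}: apply the theorem once to peel off $E^{\mu_1}=E^k$, observe that what remains is again a BZ product of characters whose reordered dimension partition is the column-stripped $\lambda'$ with $(\lambda')^t=(\mu_2,\dots,\mu_{\lambda_1})$, and iterate. The bookkeeping you supply (absorbing the $G_0$-factors, the identity $(\lambda')^t_i=\mu_{i+1}$, and that the induction parameter $\lambda_1$ drops) is exactly what fills in the paper's unstated proof.
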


By exactness of $E$ (Theorem \ref{thm:ExactHaus}) we obtain
\begin{cor}\label{cor:UnSubqDegPS}
In the notations of the previous corollary, $\chi_1 \times  \cdots \times \chi_k$ has a unique irreducible subquotient $\pi$ such that $\dim E^{\mu}(\pi) =1$. For any other irreducible subquotient $\rho$ we have $E^{\mu}(\rho)=B^{\mu}(\rho)=0$.
\end{cor}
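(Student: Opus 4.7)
The plan is to combine exactness of the functors $E^{n_i}_{\infty}$ (Theorem \ref{thm:ExactHaus}) with additivity of dimension over short exact sequences, using the fact that $\dim E^{\mu}(\chi_1 \times \cdots \times \chi_k) = 1$ from Corollary \ref{cor:DlamProd}.

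First, I would note that each functor $E^{n_i}_{\infty} : \cM_{\infty}(G_m) \to \cM_{\infty}(P_{m-n_i+1})$ is exact by Theorem \ref{thm:ExactHaus}, and restriction of representations to a parabolic subalgebra is trivially exact. By unwinding the definition $E^{\mu}(\pi) = E^{m_s}(\cdots(E^{m_1}(\pi)|_{\p_{n-m_1}})\cdots)$ for $\mu = (m_1,\ldots,m_s)$, it follows that $E^{\mu}$ is exact on $\cM_{\infty}(G_n)$.

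Next, since $\chi_1 \times \cdots \times \chi_k$ lies in $\cM_{\infty}(G_n)$ and has finite length, I would choose a Jordan--Hölder series and apply $E^{\mu}$ to it. By exactness and additivity of dimension, one gets
\[
\dim E^{\mu}(\chi_1 \times \cdots \times \chi_k) \;=\; \sum_{\rho} m_{\rho}\,\dim E^{\mu}(\rho),
\]
where the sum is over irreducible subquotients $\rho$ and $m_{\rho}$ is the multiplicity of $\rho$ in the Jordan--Hölder series. By Corollary \ref{cor:DlamProd} the left-hand side equals $1$, and every summand on the right-hand side is a non-negative integer (each $E^{\mu}(\rho)$ is an admissible representation of $G_{n-n}$, i.e.\ a finite-dimensional vector space, by the appropriate application of the admissibility results for depth derivatives, or simply by the fact that the total dimension is bounded by $1$). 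Hence exactly one Jordan--Hölder factor $\pi$ satisfies $\dim E^{\mu}(\pi) = 1$ and appears with multiplicity one, while every other irreducible subquotient $\rho$ satisfies $\dim E^{\mu}(\rho) = 0$.

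Finally, to conclude $B^{\mu}(\rho) = 0$ for those $\rho$, I would iterate the natural epimorphism $E \onto B$ (coming from the chain $E \to D \to B$ in Definition \ref{def:main}) to get a surjection $E^{\mu}(\rho) \onto B^{\mu}(\rho)$; vanishing of the source forces vanishing of the target. There is no real obstacle in this argument — the only point requiring a moment's care is to verify that the composite $E^{\mu}$ genuinely inherits exactness from its constituents, which is immediate once one observes that both parabolic restriction and each individual $E^{n_i}_{\infty}$ preserve exact sequences.
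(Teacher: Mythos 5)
Your argument is correct and takes the same approach as the paper, which derives the corollary directly "by exactness of $E$ (Theorem \ref{thm:ExactHaus})" together with $\dim E^{\mu}(\chi_1\times\cdots\times\chi_k)=1$ from Corollary \ref{cor:DlamProd}. One point worth being aware of: after the first step of the iteration the intermediate objects $E^{m_1}(\pi)|_{\p_{n-m_1}}$ are $\p$-modules rather than objects of $\cM_{\infty}(G_{n-m_1})$, so Theorem \ref{thm:ExactHaus} as literally stated does not cover the inner stages of $E^{\mu}$; both you and the paper are implicitly relying on the fact, pointed out in the introduction's ``Structure of our proof'' subsection, that the exactness and Hausdorffness results of \cite{AGS2} are actually established for a wider class of $\p_n$-modules that includes these intermediate objects.
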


First of all, let us prove the following strengthening of Theorem \ref{thm:GSMain}.

\begin{thm}\label{thm:GSMult1}
Let $\tau\in\widehat{G_{n}}$ and let $\lambda=(n_1 , \dots ,  n_k)$ be the associated partition of $\tau$.
%partition of $n$ such that $\cV(\tau) = \overline{\cO_{\lambda}}$.
Then
$\dim Wh_{\lambda
}^{\ast}\left(  \tau\right)=1$.
\end{thm}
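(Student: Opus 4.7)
The plan is to sandwich $\dim Wh_\lambda^*(\tau)$ between the non-vanishing from Theorem \ref{thm:GSMain} and an upper bound obtained by presenting $\tau^\infty$ as a quotient of a principal series of characters and bounding the Whittaker functionals of the latter via Corollary \ref{cor:DlamProd}.

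First I would let $\mu=(\mu_1,\dots,\mu_r)$ denote the partition transposed to $\lambda$ and apply Corollary \ref{cor:UniDegPrin} to obtain characters $\chi_i$ of $G_{\mu_i}$ together with an epimorphism $\pi:=\chi_1\times\cdots\times\chi_r \twoheadrightarrow \tau^\infty$ in $\cM_\infty(G_n)$. Pulling Whittaker functionals back along this epimorphism produces an injection $Wh_\lambda^*(\tau)\hookrightarrow Wh_\lambda^*(\pi)$, so the problem reduces to showing $\dim Wh_\lambda^*(\pi)\le 1$.

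Next I would combine the identification $Wh_\lambda^*(\pi)\cong (B^\lambda(\pi))^*$ with the natural surjection $E^\lambda(\pi)\twoheadrightarrow B^\lambda(\pi)$ (both given by the lemma following the definition of $Wh_\lambda^*$) to obtain
\[
\dim Wh_\lambda^*(\pi)\;\le\;\dim B^\lambda(\pi)\;\le\;\dim E^\lambda(\pi).
\]
Corollary \ref{cor:DlamProd}, applied to the composition $(\mu_1,\dots,\mu_r)$ whose reordering is the already sorted partition $\mu$ and whose transpose is $\lambda$, then gives $\dim E^\lambda(\pi)=1$. Hence $\dim Wh_\lambda^*(\tau)\le 1$, while the opposite inequality is precisely the non-vanishing statement of Theorem \ref{thm:GSMain}.

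The heavy machinery (the Vogan-type presentation in Corollary \ref{cor:UniDegPrin}, the monomial computation of Corollary \ref{cor:DlamProd}, the existence of Whittaker functionals in Theorem \ref{thm:GSMain}) is already in place, so the only real point to verify is the combinatorial bookkeeping in the last paragraph, namely that feeding the transposed parts $\mu_1,\dots,\mu_r$ into Corollary \ref{cor:DlamProd} produces precisely the iterated functor $E^\lambda$ whose dimension we need. Since $\mu$ is already sorted and its transpose is $\lambda$ by construction, this is immediate, and I do not foresee any further obstacle.
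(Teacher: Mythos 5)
Your proof is correct and follows essentially the same route as the paper's: present $\tau^\infty$ as a quotient of a monomial representation $I_\mu$ via Corollary \ref{cor:UniDegPrin}, bound $\dim E^\lambda(I_\mu)=1$ via Corollary \ref{cor:DlamProd}, chain through the $E^\lambda\twoheadrightarrow B^\lambda$ surjection and the identification $Wh^*_\lambda\cong(B^\lambda)^*$, and close with the non-vanishing in Theorem \ref{thm:GSMain}. The only cosmetic difference is that you transfer Whittaker functionals directly by pulling back along the epimorphism $\pi\twoheadrightarrow\tau^\infty$, whereas the paper dually uses right-exactness of $B^\lambda$ to pass from $B^\lambda(\tau^\infty)$ to $B^\lambda(I_\mu)$; these are equivalent formulations of the same step.
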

\begin{proof}
Let $\mu=(m_1 , \dots ,  m_l)$ be the transposed partition to $\lambda$.
By Corollary \ref{cor:UniDegPrin},
there exist characters $\chi_i$ of $G_{m_i}$ for all $1 \leq i \leq l$ and an epimorphism  $I_{\mu}:=\chi_1 \times  \cdots  \times \chi_l \onto \tau^{\infty}$.
By Corollary \ref{cor:DlamProd}, $\dim E^{\lam}(I_{\mu})=1$ and hence $$\dim Wh^*_{\lambda}(\tau) \leq  \dim B^\lam(\tau^{\infty}) \leq
\dim B^\lam(I_{\mu}) \leq \dim E^\lam(I_{\mu}) =1.$$
Since by Theorem \ref{thm:GSMain} $Wh^*_{\lambda}(\tau) \neq 0,$ we have $\dim Wh_{\lambda}^{\ast}\left(  \tau\right)=1$.
\end{proof}

Let us now compute the adduced representations of the Speh complementary series representations.
\begin{thm}\label{thm:ASCS}
$A(\Delta\left(  4m,k,s\right)) \cong \Delta\left(  4m-4,k,s\right).$
\end{thm}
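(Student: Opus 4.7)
The plan is to realize both $(A\Delta(4m,k,s))^{\infty}$ and $\Delta(4m-4,k,s)^{\infty}$ as irreducible subquotients of the same four-fold monomial representation of $G_{4m-4}$, and then invoke the uniqueness provided by Corollary~\ref{cor:UnSubqDegPS}, in the spirit of the proof of Theorem~\ref{thm:GSMult1}.

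First, using Proposition~\ref{prop:SpehDegPrin} together with the identity $(\pi_1 \times \pi_2) \otimes |\det|^s = (\pi_1 \otimes |\det|^s) \times (\pi_2 \otimes |\det|^s)$, I would embed each Speh factor as
$$\delta(2m, k; 0, \pm s) \hookrightarrow \chi(m, \varepsilon_{k+1}, k/2 \pm s) \times \chi(m, 0, -k/2 \pm s),$$
and thus, by exactness of parabolic induction in each argument, realize $\Delta(4m,k,s)$ as a submodule of the monomial
$$I := \chi(m, \varepsilon_{k+1}, k/2+s) \times \chi(m, 0, -k/2+s) \times \chi(m, \varepsilon_{k+1}, k/2-s) \times \chi(m, 0, -k/2-s).$$
Since the associated partition of $\Delta(4m,k,s)$ is $4^m$, its depth is $4$, so Corollary~\ref{cor:SmoothDer}, Theorem~\ref{thm:ExactHaus}, and Theorem~\ref{thm:Prod} together present $E^4_{\infty}(\Delta(4m,k,s))$ as a submodule of
$$I' := \chi(m-1, \varepsilon_{k+1}, k/2+s) \times \chi(m-1, 0, -k/2+s) \times \chi(m-1, \varepsilon_{k+1}, k/2-s) \times \chi(m-1, 0, -k/2-s).$$

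Next, since $(A\Delta(4m,k,s))^{\infty}$ is a quotient of $B^4_{\infty}(\Delta(4m,k,s)^{\infty})$ by Corollary~\ref{cor:D3OntoA}, and $B^4_{\infty}$ is in turn a quotient of $E^4_{\infty}$, it is an irreducible subquotient of $I'$. Applying the same Speh-embedding construction with $m$ replaced by $m-1$ shows that $\Delta(4m-4,k,s)^{\infty}$ is also an irreducible subquotient of $I'$. By Theorem~\ref{thm:GSMain}, both representations have associated partition $4^{m-1}$ and admit a nonzero $(N,\psi_{4^{m-1}})$-equivariant functional, so $B^{4^{m-1}}$ is non-vanishing on each, and hence so is $E^{4^{m-1}}$ (since $B^{4^{m-1}}$ is a quotient of $E^{4^{m-1}}$). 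Because the transpose of the composition $(m-1,m-1,m-1,m-1)$ of $I'$ is exactly $4^{m-1}$, Corollary~\ref{cor:UnSubqDegPS} guarantees at most one irreducible subquotient of $I'$ with non-vanishing $E^{4^{m-1}}$, so the two representations must coincide. Since both are the smooth vectors of irreducible unitary representations, the isomorphism lifts to the unitary completions.

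The main obstacle is the bookkeeping: checking that the monomial $I'$ produced by Theorem~\ref{thm:Prod} from $I$ is precisely the one whose Speh-type subquotient recovers $\Delta(4m-4,k,s)$, and verifying the non-vanishing hypothesis of Corollary~\ref{cor:UnSubqDegPS} for both candidates. Both points are immediate from Proposition~\ref{prop:SpehDegPrin} and Theorem~\ref{thm:GSMain}. A notable feature is that this argument bypasses the Vogan classification and so handles the outstanding case $k=m$ left open in \cite{GS} on exactly the same footing as the cases $k\neq m$ treated there.
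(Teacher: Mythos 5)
Your proof is correct and follows essentially the same approach as the paper: both arguments realize $(A\Delta(4m,k,s))^{\infty}$ and $\Delta(4m-4,k,s)^{\infty}$ as irreducible subquotients of the same monomial representation of $G_{4m-4}$ (obtained by applying $E^4_{\infty}$ to a monomial envelope of $\Delta(4m,k,s)$ and invoking Theorem~\ref{thm:Prod}), and then conclude via the non-vanishing from Theorem~\ref{thm:GSMain} together with the uniqueness of Corollary~\ref{cor:UnSubqDegPS}. The only cosmetic difference is that you use the submodule realization of the Speh representation from Proposition~\ref{prop:SpehDegPrin}, hence need the injective half of the exactness in Theorem~\ref{thm:ExactHaus}, whereas the paper uses the quotient realization, so the difference has no mathematical substance.
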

\begin{proof}
Since the associated partition of $\Delta\left(  4m,k,s\right)$ is $4^m$, Theorem \ref{thm:GSMain} implies that $depth(\Delta\left(  4m,k,s\right))=4$ and the associated partition of $A(\Delta\left(  4m,k,s\right)$ is $\RamiA{(4)}^{m-1}$ which is the same as that of $\Delta\left(  4m-4,k,s\right).$
Let $$I_{m,k,s}:= \chi\left(  m,\varepsilon_{k+1},-k/2 + s\right)  \times\chi\left(  m,0,k/2 + s \right) \times \chi\left(  m,\varepsilon_{k+1},-k/2 - s\right)  \times\chi\left(  m,0,k/2 - s \right).$$
From \S \ref{subsubsec:VogClas} we see that $\Delta\left(  4m,k,s\right)$ is a quotient of $I_{m,k,s}$.
Thus, %by Theorem \ref{thm:ExactHaus},
$E^4_{\infty}(\Delta\left(  4m,k,s\right))$ is a quotient of $E^4_{\infty}(I_{m,k,s})$. By Theorem \ref{thm:Prod}, $E^4_{\infty}(I_{m,k,s})=I_{m-1,k,s}$ and by Corollary \ref{cor:D3OntoA} $A(\Delta\left(  4m,k,s\right))$ is a quotient of $E^4_{\infty}(\Delta\left(  4m,k,s\right))\otimes|\det|^{1/2}$. Altogether, we get that $A\Delta\left(  4m,k,s\right)$ is a quotient of  $I_{m-1,k,s}$. But so is $\Delta\left(  4m-4,k,s\right)$.

From Theorem \ref{thm:GSMain} we have $Wh^*_{4^{m-1}}(A(\Delta\left(  4m,k,s\right)) \neq 0$ and $Wh_{4^{m-1}}^*(\Delta\left(  4m-4,k,s\right)) \neq 0$. Thus, Corollary \ref{cor:UnSubqDegPS} implies $A(\Delta\left(  4m,k,s\right)) \cong \Delta\left(  4m-4,k,s\right)$.
% Alternative proof using the Vogan classification:
%Thus, $A(\Delta\left(  4m,k,s\right))$ and $\Delta\left(  4m-4,k,s\right)$ have the same infinitesimal character. Using the Vogan classification, one can show that $\Delta\left(  4m-4,k,s\right)$ is the only irreducible unitary representation with the given annihilator variety and infinitesimal character (see \cite[Lemma 5.4.2]{GS}). Thus, $A(\Delta\left(  4m,k,s\right)) \cong \Delta\left(  4m-4,k,s\right)$.
\end{proof}

\begin{remark}
Using a similar argument, one can give an alternative proof of $A\delta(2m,k) \cong \delta(2m-2,k)$, using only the fact that $\delta(2m,k)$ is an irreducible quotient of  $\chi\left(  m,\varepsilon_{k+1},-k/2\right)  \times\chi\left(  m,0,k/2\right)$ with associated partition $2^m$.
%that has the same annihilator variety as $\chi\left(  m,\varepsilon_{k+1},-k/2\right)  \times\chi\left(  m,0,k/2\right)$.
\end{remark}

\subsection{The isomorphisms between depth derivatives and adduced representation}\label{subsec:HDA}

%Let us now make the connection between smooth derivatives and adduced representations more precise.
%First of all, let us settle the (simpler)

In this subsection we prove that for irreducible unitary representations, all the notions of highest derivative agree and extend the notion of adduced representation.

\begin{thm}\label{thm:HDA}
For $\tau \in \widehat{G_n}$ of depth $d$, the canonical maps
$$E_{\infty}^{d}(\tau^{\infty}) \to D_{\infty}^{d}(\tau^{\infty}) \to B_{\infty}^{d}(\tau^{\infty})\to (A\tau)^{\infty}$$
 are isomorphisms.
\end{thm}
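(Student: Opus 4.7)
The theorem asserts that three successive canonical maps form a chain of isomorphisms. I would attack them one at a time.

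The first map, $E^d_\infty(\tau^\infty) \to D^d_\infty(\tau^\infty)$, is already known to be an isomorphism by Corollary \ref{cor:SmoothDer}(\ref{it:1=2}).

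For the second map $D^d_\infty(\tau^\infty) \to B^d_\infty(\tau^\infty)$, which is the natural surjection onto $\fv_{n-d+1}$-coinvariants, I would show that $\fv_{n-d+1}$ already acts trivially on $E^d_\infty(\tau^\infty)$. Since $\tau$ is irreducible unitary, Corollary \ref{cor:UniDegPrin} provides an embedding $\tau^\infty \hookrightarrow I := \chi'_1 \times \cdots \times \chi'_k$ with $k = d$ (the transposed associated partition has $d$ parts). Invoking exactness of $E^d_\infty$ (Theorem \ref{thm:ExactHaus}) and Theorem \ref{thm:Prod}, I obtain an embedding of $E^d_\infty(\tau^\infty)$ into $J := (\chi'_1)|_{G_{n_1-1}} \times \cdots \times (\chi'_k)|_{G_{n_k-1}}$, which is a representation of $G_{n-d}$. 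Thus $\fv_{n-d+1}$ acts trivially on $J$ and, in particular, on the sub-representation $E^d_\infty(\tau^\infty)$. This gives the second isomorphism.

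The third map $B^d_\infty(\tau^\infty) \to (A\tau)^\infty$ is the main point. Surjectivity is Corollary \ref{cor:D3OntoA}; for injectivity, let $K$ be the kernel, and let $\lambda' = (\lambda_2, \ldots, \lambda_m)$ be the associated partition of $A\tau$ given by Theorem \ref{thm:GSMain}. My plan is to exploit multiplicity-one statements together with the embedding $B^d_\infty(\tau^\infty) \hookrightarrow J$. By Corollary \ref{cor:UnSubqDegPS}, exactly one irreducible subquotient $\sigma$ of $J$ satisfies $\dim B^{\lambda'}(\sigma) = 1$, while all other irreducible subquotients have $B^{\lambda'} = 0$. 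Since $(A\tau)^\infty$ arises as a subquotient of $J$ and Theorem \ref{thm:GSMult1} applied to $A\tau$ yields $\dim B^{\lambda'}((A\tau)^\infty) = 1$, we must have $\sigma = (A\tau)^\infty$. Similarly, Theorem \ref{thm:GSMult1} applied to $\tau$ (combined with $\lambda = (d, \lambda')$ and the iterative definition $B^\lambda = B^{\lambda'} \circ B^d$) yields $\dim B^{\lambda'}(B^d_\infty(\tau^\infty)) = \dim B^\lambda(\tau^\infty) = 1$.

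\textbf{Main obstacle.} The delicate step is to deduce $K = 0$ from these constraints. My plan is to observe that every Jordan-H\"older factor of $K$ is a subquotient of $J$, and must therefore be either $(A\tau)^\infty$ or a representation $\rho$ with $B^{\lambda'}(\rho) = 0$ (and hence, by Theorem \ref{thm:GSMain} combined with the structure of $J$, associated variety strictly smaller than $\overline{\cO_{\lambda'}}$). Since $(A\tau)^\infty$ appears with multiplicity one in the Jordan-H\"older composition of the monomial $J$, and the surjective image of $B^d_\infty(\tau^\infty) \to (A\tau)^\infty$ already accounts for this copy, $K$ can contain no $(A\tau)^\infty$ as JH factor. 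Hence every JH factor of $K$ has strictly smaller associated variety than that of $(A\tau)^\infty$, forcing $\cV(K) \subsetneq \overline{\cO_{\lambda'}}$. But $B^d_\infty(\tau^\infty)$ surjects onto $(A\tau)^\infty$, so $\overline{\cO_{\lambda'}} \subseteq \cV(B^d_\infty(\tau^\infty))$, and since $K$ is a sub-representation of $B^d_\infty(\tau^\infty)$, any non-triviality of $K$ combined with $\dim B^{\lambda'}(B^d_\infty(\tau^\infty)) = 1$ would contradict the multiplicity and associated-variety bookkeeping. I expect that making this last contradiction precise, using right exactness of $B^{\lambda'}$ on the sequence $0 \to K \to B^d_\infty(\tau^\infty) \to (A\tau)^\infty \to 0$, will be the technical heart of the proof.
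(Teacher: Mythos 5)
Your steps 1 and 2 are sound, and step 2 (embedding $E^d_\infty(\tau^\infty)$ into the monomial $J$ over $G_{n-d}$ to show $\fv_{n-d+1}$ acts trivially) is a nice way to see $D = B$ directly. However, step 3 has a genuine gap that you correctly sense but cannot close with the tools you invoke.

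You reach the conclusion that every Jordan--H\"older factor of $K$ has annihilator variety strictly contained in $\overline{\cO_{\lambda'}}$, hence $\cV(K)\subsetneq\overline{\cO_{\lambda'}}$. But this is \emph{not} a contradiction on its own: there is nothing stopping a nonzero subrepresentation of $B^d_\infty(\tau^\infty)$ from having small annihilator variety. Right-exactness of $B^{\lambda'}$ does not help, since $B^{\lambda'}(K)\to B^{\lambda'}(B^d_\infty(\tau^\infty))\to B^{\lambda'}((A\tau)^\infty)\to 0$ is perfectly consistent with $B^{\lambda'}(K)=0$ and $K\neq 0$; indeed, if $\cV(K)$ is small one \emph{expects} $B^{\lambda'}(K)=0$, so counting dimensions of $B^{\lambda'}$ cannot detect $K$. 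The missing ingredient is a statement of the form: \emph{any nonzero subrepresentation of the monomial in question has maximal annihilator variety}. This is precisely Proposition \ref{prop:MonIrr}, and it is not automatic --- the monomial $J$ arising from an arbitrary choice of $\chi'_i$ in Corollary \ref{cor:UniDegPrin} need not satisfy it. The paper therefore constructs a specific monomial $I^\geq(\tau)$ with the characters ordered so that the real parts of the infinitesimal characters are non-ascending; for this ordering the corresponding generalized Verma module is irreducible (Lemma \ref{lem:VermaIrr}), and the pairing with analytic vectors (Lemma \ref{lem:IndPairing}) then forces every subrepresentation to have full annihilator ideal. Combined with Lemma \ref{lem:IA} ($I^\geq(A\tau)=E^d(I^\geq(\tau))$), one embeds $E^d_\infty(\tau^\infty)$, and hence $K$, into $I^\geq(A\tau)$; if $K\neq 0$ then $\cV(K)=\cV(A\tau)$, so $K$ contains $(A\tau)^\infty$ as a JH factor, contradicting multiplicity one exactly as you observed. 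In short: your multiplicity-one bookkeeping is correct, but without Proposition \ref{prop:MonIrr} (and the carefully ordered $I^\geq$) you cannot force $\cV(K)$ to be maximal, so the contradiction never materializes.
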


For case $F=\C$ it follows from the Vogan classification ( \S \ref{subsubsec:VogClas}), Theorem \ref{thm:Prod} and Corollary \ref{cor:D3OntoA}.
Thus from now on till the end of the section, we let $F=\R$. It is enough to prove that the resulting map $E_{\infty}^{d}(\tau^{\infty}) \onto (A\tau)^{\infty}$  has zero kernel.

\begin{defn}
To every $\tau \in \widehat{G_n}$ of depth $d$ we attach a monomial representation $I^\geq(\tau)$.
First, we attach to $\tau$ $d$ characters using the Vogan classification in the following way.
To every character we attach itself, to the Speh representation $\delta(m,k)$ we attach $\chi\left(  m,\varepsilon_{k+1},k/2\right)$ and $\chi\left(  m,0,-k/2\right)$ and to the Speh complementary series $\Delta(m,k,s)$ we attach $\chi\left(  m,\varepsilon_{k+1},k/2+s \right)$, $\chi\left(  m,0,-k/2 +s \right)$, $\chi\left(  m,\varepsilon_{k+1},k/2 - s \right)$ and $\chi\left(  m,0,-k/2 -s\right)$. Then we order all the characters we have in non-ascending order of the real part of the complex parameter, and let $I^\geq(\tau)$ be their BZ product in this order.
\end{defn}

Note that by Proposition \ref{prop:SpehDegPrin} and Corollary \ref{cor:UniDegPrin}, $\tau^{\infty}$ is a subquotient of $I^\geq(\tau)$ and $\cV(\tau)=\cV(I^\geq(\tau))$.

\begin{lem}\label{lem:UnSubq}
For every $\tau \in \widehat{G_n}$, the smooth representation $\tau^{\infty}$ occurs with multiplicity one in the Jordan-Holder series of $I^\geq(\tau)$ and it is the only \RamiA{irreducible} subquotient of $I^\geq(\tau)$ with maximal annihilator variety.
\end{lem}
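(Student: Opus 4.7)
The plan is to extract multiplicity-one information from the equality $\dim E^\lambda(I^\geq(\tau))=1$ coming from Corollary \ref{cor:DlamProd}, and to identify the unique composition factor that registers on $E^\lambda$ as $\tau^\infty$ via Theorem \ref{thm:GSMult1}.

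First I would unpack the structure of $I^\geq(\tau)$. Writing $\tau = \tau_1 \times \cdots \times \tau_k$ in the Vogan classification, the recipe defining $I^\geq$ attaches to each basic piece $\tau_i$ a tuple of (possibly non-unitary) characters whose composition of sizes is exactly the transposed partition of $AP(\tau_i)$ (this is a one-line check in each of the four cases of \S\S\S \ref{subsubsec:VogClas}). Since the transposed associated partition of a BZ-product is the reordered concatenation of the transposes of the associated partitions of the factors, the composition of character sizes appearing in $I^\geq(\tau)$ rearranges to $\mu:=\lambda^T$, where $\lambda:=AP(\tau)$. Applying Corollary \ref{cor:DlamProd} to this composition then yields $\dim E^\lambda(I^\geq(\tau)) = 1$.

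Second, by Theorem \ref{thm:GSMult1} we have $\dim Wh^{*}_\lambda(\tau)=1$, so $\dim B^\lambda(\tau^\infty) = 1$; combined with the natural surjection $E^\lambda(\tau^\infty) \onto B^\lambda(\tau^\infty)$, this forces $E^\lambda(\tau^\infty) \ne 0$. The functor $E^\lambda$ is a composite of the exact functors $E^{n_i}$ on the appropriate depth-bounded subcategories (Theorem \ref{thm:ExactHaus}), and is therefore itself exact; summing $\dim E^\lambda$ over a composition series of $I^\geq(\tau)$ produces the total dimension $1$. Since $\tau^\infty$ is a composition factor (by Proposition \ref{prop:SpehDegPrin} and Corollary \ref{cor:UniDegPrin}) contributing at least $1$, it must appear with multiplicity exactly one, $\dim E^\lambda(\tau^\infty) = 1$, and every other irreducible subquotient $\rho$ satisfies $E^\lambda(\rho) = B^\lambda(\rho) = 0$. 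This establishes part~(1).

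For part (2), observe that any irreducible subquotient $\rho$ has $\cV(\rho) \subseteq \cV(I^\geq(\tau)) = \overline{\cO_\lambda}$, so the condition ``maximal annihilator variety'' is equivalent to $AP(\rho) = \lambda$. The hard part will be ruling out $\rho\not\cong \tau^\infty$ with $AP(\rho) = \lambda$: by the previous paragraph such a $\rho$ would necessarily have $B^\lambda(\rho) = 0$, so I need the converse direction of Matumoto's theorem in the form ``$AP(\rho) = \lambda$ implies $B^\lambda(\rho) \ne 0$'' for arbitrary irreducible admissible $\rho$. For unitary $\rho$ this is Theorem \ref{thm:GSMain}, but a subquotient of $I^\geq(\tau)$ need not be unitary; accordingly I would invoke the formula for $\cV(B^k(\rho))$ in terms of $\cV(\rho)$ from \cite[Theorem~5.0.4]{GS-Gen}, iteratively applied to compute $\cV(B^\lambda(\rho))$, to conclude that $B^\lambda(\rho)\neq 0$ whenever $AP(\rho) = \lambda$. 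Together with the uniqueness of the factor with nonzero $E^\lambda$ from part~(1), this forces $\rho \cong \tau^\infty$ and completes the proof.
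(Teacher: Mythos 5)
Your argument is correct in outline, but it takes a genuinely different (and in one place heavier) route than the paper's own proof.

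For part~(1), your derivation is clean and internal to the paper: $\dim E^{\lambda}(I^{\geq}(\tau)) = 1$ from Corollary~\ref{cor:DlamProd}, $E^{\lambda}(\tau^{\infty}) \neq 0$ from Theorem~\ref{thm:GSMult1} (since $Wh^{*}_{\lambda}(\tau) \cong (B^{\lambda}(\tau^{\infty}))^{*} \neq 0$ and $E^{\lambda} \onto B^{\lambda}$), and exactness of $E^{\lambda}$ then forces multiplicity one and $E^{\lambda}(\rho) = 0$ for any other factor $\rho$. This differs substantially from the paper's proof. The paper never computes a derivative here; instead it works entirely with annihilator varieties: for a single Speh representation $\delta$ it invokes \cite[Proposition~VI.7]{BSS} to conclude $\cV(I^{\geq}(\delta)/\delta) \subsetneq \cV(I^{\geq}(\delta))$, and then handles general $\tau$ by exactness of tensor products, exactness of parabolic induction, Theorem~\ref{thm:AnnVarProd}, and the observation that $I^{\geq}(\tau)$ and $\prod\chi_i \times \prod I^{\geq}(\delta_j)$ share the same Jordan--Hölder components. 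That single annihilator-variety computation simultaneously yields both parts of the lemma.

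For part~(2), the gap you flag is exactly the crux, and the way you propose to fill it is the genuine divergence from the paper. You want ``$AP(\rho) = \lambda \Rightarrow B^{\lambda}(\rho) \neq 0$'' for possibly non-unitary $\rho$, and you reach for \cite[Theorem~5.0.4]{GS-Gen}. That citation is slightly off: Theorem~5.0.4 gives $\cV(B^{k}(\pi))$ for a single $k$, and to conclude $B^{\lambda}(\rho) \neq 0$ you would need to iterate it through the chain $B^{n_1}, B^{n_2}, \ldots$, each time verifying that the output annihilator variety is nonempty and has the right depth for the next step to be the depth derivative. That is more than a direct citation; it is an argument that \cite{GS-Gen} is really a standalone paper written partly to supply. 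The paper itself explicitly notes, in the remark following this very lemma, that the route via Corollary~\ref{cor:UnSubqDegPS} and \cite{GS-Gen} would give a more general statement (uniqueness of the maximal subquotient for \emph{any} monomial representation); the authors deliberately avoid it in the proof of the lemma, instead using \cite{BSS} to stay self-contained. So: your approach buys generality and conceptual clarity at the cost of invoking external machinery (with a mild potential for circularity between this paper and \cite{GS-Gen}), while the paper's approach buys self-containment and economy at the cost of relying on a concrete fact about Speh representations specific to $\GL_n$.
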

\begin{proof}

Let $\delta$ denote a Speh representation.
By Proposition \ref{prop:SpehDegPrin}, $\delta \subset I^{\geq}(\delta)$ and by \cite[Proposition VI.7]{BSS}, $\cV(I^{\geq}(\delta)/\delta)\subsetneq \cV(I^{\geq}(\delta))$.
Now let $\tau \in \widehat{G_n}$. Then $\tau=\prod \chi_i \times \prod \delta_j$, where $\chi_i$ are characters, and $\delta_j$ are Speh representations, possibly twisted by characters. Then, by exactness of tensor product,
%(see Proposition \ref{prop:ctp_ext} below),
$(\boxtimes_i \chi_i) \boxtimes (\boxtimes_j I^{\geq}(\delta_j)) \supset  (\boxtimes_i \chi_i) \boxtimes (\boxtimes_j \delta_j)$ and the annihilator variety of the quotient is strictly smaller then the annihilator variety of $(\boxtimes_i \chi_i) \boxtimes (\boxtimes_j I^{\geq}\delta_j)$. The lemma now follows from the exactness of induction, Theorem \ref{thm:AnnVarProd} and the fact that $I^{\geq}(\tau)$ and $\prod \chi_i \times \prod I^{\geq}(\delta_j)$ have the same Jordan-Holder components.
% \RamiA{again we use the fact that the product is commutative in the K-group.}

\end{proof}

\begin{rem}
One can show that any monomial representation has a unique subquotient with maximal annihilator variety, occuring with multiplicity one.
By \cite{GS-Gen}, this is equivalent to the statement that it has a unique subquotient with minimally degenerate Whittaker functional, which follows from  Corollary \ref{cor:UnSubqDegPS}.
%Another possible way to prove that is by using a more refined invariant than $\cV$, called
%the associated cycle. It is a nonnegative
%integer combination of nilpotent orbits \DimaC{of $K_{\C}$ on $(\g/\mathfrak{k})^*$, and it is additive on the Grothendieck group. It is the image of the singular support of the Beilinson-Bernstein localization under
%the moment map $T_{\C}^*(G/P) \to \g^*$.}
%The fact that the moment map from $T^*(G/P) \to \g^*$ is generically one to one in $G_n$ probably implies that
%the associated cycle of a monomial representation $\mu$ is 1 times the
%corresponding nilpotent orbit. From these two facts it follows that
%$\mu$ has a unique large constituent, having multiplicity one.
\end{rem}

\begin{lem}\label{lem:IA}
For $\tau \in \widehat{G_n}$ of depth $d$, $I^{\geq}(A\tau)=E^d(I^{\geq}(\tau))$.
\end{lem}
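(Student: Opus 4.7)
The plan is to reduce the statement to Theorem \ref{thm:Prod} by tracking what happens to the characters attached to each Vogan factor of $\tau$ under the operation $A$. Write $\tau = \pi_1 \times \cdots \times \pi_k$ in Vogan form, and let $d_i = depth(\pi_i)$, so that $d = \sum d_i$ by Corollary \ref{cor:DepthProd} (and Theorem \ref{thm:GSMain}). To each $\pi_i$ the definition of $I^{\geq}$ attaches $d_i$ explicit characters $\chi_{i,1},\ldots,\chi_{i,d_i}$ of smaller general linear groups (for instance, $\chi(m,\varepsilon_{k+1},k/2)$ and $\chi(m,0,-k/2)$ are attached to $\delta(2m,k)$, and the four obvious twists for $\Delta(4m,k,s)$, etc.).

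The first step is to verify, by direct case-by-case inspection of the Vogan classification in \S\S\S\ref{subsubsec:VogClas}, that the characters attached to $A\pi_i$ are precisely the restrictions (from $G_m$ to $G_{m-1}$) of the characters attached to $\pi_i$. For a unitary character $\pi_i=\chi(m,\varepsilon,it)$ this is immediate from $A\chi = \chi|_{G_{m-1}}$. For a Speh representation $\pi_i = \delta(2m,k;it)$ one uses $A\delta(2m,k;it) = \delta(2m-2,k;it)$ (from \cite{SaSt}, together with the fact that $A$ commutes with unitary twists) and compares the lists of attached characters. For a Stein complementary series $\sigma(2m,s;\varepsilon,it)$ one uses $A\sigma(2m,s;\varepsilon,it) = \sigma(2m-2,s;\varepsilon,it)$ from \cite{Sahi-PAMS}. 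Finally, for a Speh complementary series $\Delta(4m,k,s;it)$ one uses Theorem \ref{thm:ASCS} together with the multiplicativity of $A$ from \cite{Sahi-Kirillov}. In every case the attached character $\chi(m,\varepsilon,z)$ becomes $\chi(m-1,\varepsilon,z)$, so the complex parameter $z$ (and hence its real part) is unchanged.

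Since $\re z$ is preserved by this bijection between attached characters of $\tau$ and attached characters of $A\tau$, the non-ascending ordering used in the definition of $I^{\geq}$ is the same for $\tau$ and for $A\tau$. Writing $I^{\geq}(\tau) = \chi_1\times\cdots\times\chi_d$ with $\chi_j$ a character of some $G_{m_j}$, we conclude that
$$I^{\geq}(A\tau) = \chi_1|_{G_{m_1-1}} \times \cdots \times \chi_d|_{G_{m_d-1}}.$$
By Theorem \ref{thm:Prod} the right-hand side is exactly $E^d(I^{\geq}(\tau))$, completing the proof.

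The only real obstacle is bookkeeping in the case analysis: one must check that the list of attached characters and their real-part ordering really do commute with $A$ for every type of factor in the Vogan classification. Once this compatibility is written out, the result is a direct application of Theorem \ref{thm:Prod}.
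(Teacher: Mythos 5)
Your proposal is correct and takes the same approach the paper does: the paper's proof is a one-line citation of Theorem \ref{thm:Prod}, the preliminaries on adduced representations in \S\S\S\ref{subsubsec:PrelA} (multiplicativity of $A$, $A\chi=\chi|_{G_{m-1}}$, $A\sigma(2m,s)=\sigma(2m-2,s)$, $A\delta(2m,k)=\delta(2m-2,k)$), and Theorem \ref{thm:ASCS}, which is precisely the case-by-case bookkeeping you carry out. The only cosmetic point is that the multiplicativity of $A$ (needed to pass from the Vogan factors $\pi_i$ to $\tau$ itself) is used for all factors, not just the Speh complementary series, but you do cite it, so the argument is complete.
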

This lemma follows from Theorem \ref{thm:Prod}, %Proposition \ref{prop:SpehDegPrin},
\S\ref{subsubsec:PrelA} and Theorem \ref{thm:ASCS}.

We will also use the following proposition that we will prove in \S \ref{subsubsec:PfMonIrr}.

\begin{prop}\label{prop:MonIrr}
Let $\chi_1 , \dots ,  \chi_k$ be characters of $G_{n_1} , \dots ,  G_{n_k}$ respectively and let $d\chi_i$ denote the corresponding characters of $\g_{n_i}$. Suppose that $\re d\chi_1 \geq  \cdots  \geq \re d\chi_k$.
Then any subrepresentation $\pi \subset \chi_1 \times  \cdots  \times \chi_k$ satisfies $\cV(\pi) = \cV(\chi_1 \times  \cdots  \times \chi_k)$.

%Then $\chi_1 \times  \cdots  \times \chi_k$ has a unique irreducible subrepresentation $\pi$, and $\pi$ satisfies $\cV(\pi) = \cV(\chi_1 \times  \cdots  \times \chi_k)$.
\end{prop}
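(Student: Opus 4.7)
Set $I := \chi_1 \times \cdots \times \chi_k$, let $\lam$ be the partition obtained by reordering the composition $(n_1, \dots, n_k)$, and let $\mu$ be its transposed partition. The inclusion $\cV(\pi) \subseteq \cV(I) = \overline{\cO_\mu}$ (with the identity following from Theorem \ref{thm:AnnVarProd}) is automatic, so the plan is to establish the reverse inclusion. Since the annihilator variety of a finite length module equals the union of those of its irreducible composition factors, it suffices to produce a composition factor of $\pi$ whose annihilator variety equals $\overline{\cO_\mu}$. By Corollary \ref{cor:UnSubqDegPS} the only candidate is the unique irreducible subquotient $\pi_0$ of $I$ with $E^\mu(\pi_0) \neq 0$; the equality $\cV(\pi_0) = \overline{\cO_\mu}$ follows from the relationship between degenerate Whittaker models and the annihilator variety \cite{GS-Gen}. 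Hence the task reduces to showing that $\pi_0$ is a composition factor of every nonzero subrepresentation $\pi \subset I$.

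By exactness of $E^\mu$ (Theorem \ref{thm:ExactHaus}) and $\dim E^\mu(I) = 1$ (Corollary \ref{cor:DlamProd}), the exact sequence
\[
0 \to E^\mu(\pi) \to E^\mu(I) \to E^\mu(I/\pi) \to 0,
\]
combined with Corollary \ref{cor:UnSubqDegPS} (which together with exactness guarantees that $E^\mu(\sigma) \neq 0$ for any subquotient $\sigma$ of $I$ forces $\pi_0$ to be a composition factor of $\sigma$), reduces the problem to proving $E^\mu(\pi) \neq 0$, equivalently that the generator $\Lam$ of $E^\mu(I)^* \cong \Hom_{N_\mu}(I, \psi_\mu)$ does not vanish on $\pi$. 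The plan is to realize $\Lam$ as a Jacquet-type integral $\Lam(\phi) = \int_{N_\mu}\phi(w_0 n)\psi_\mu(n)^{-1}\,dn$, where $w_0$ is the long representative in $W_{L_\alpha}\backslash W_n / W_{L_\mu}$ of the big Bruhat cell, and to verify that the ordering hypothesis $\re d\chi_1 \geq \cdots \geq \re d\chi_k$ is precisely the condition ensuring absolute convergence of this integral---in parallel with the classical convergence criteria for standard intertwining operators in Casselman--Wallach theory.

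The hard part will be the final step, showing that $\Lam$ restricts to a nonzero functional on every nonzero subrepresentation $\pi$. The largest $G$-invariant subspace of $\ker\Lam$ is $\pi^{\max}$, the largest subrepresentation of $I$ with $E^\mu(\pi^{\max}) = 0$; what is needed is $\pi^{\max} = 0$, equivalently that $\operatorname{soc}(I)$ contains no irreducible component other than $\pi_0$. Under the ordering hypothesis the Langlands classification ensures that $I$ has a unique irreducible quotient $L$ distinct from $\pi_0$ with $\cV(L) \subsetneq \cV(I)$, giving structure to the ``top'' of $I$; the corresponding dual statement at the ``bottom''---that the socle is isotypic of type $\pi_0$---appears to require an additional input, most naturally via an iteration of Corollary \ref{cor:UnSubqDegPS} applied to $\operatorname{rad}(I) = \ker(I \onto L)$ combined with the Langlands classification, or via a direct analysis using the Vogan classification of Harish-Chandra modules built from such Bernstein--Zelevinsky products of characters.
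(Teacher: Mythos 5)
Your approach — via degenerate Whittaker functionals and a Jacquet-type integral — is genuinely different from the paper's, and the opening reduction is sound: by exactness of $E^\mu$ (Theorem \ref{thm:ExactHaus}) and $\dim E^\mu(I)=1$ (Corollary \ref{cor:DlamProd}), the claim reduces to showing $E^\mu(\pi)\neq 0$ for every nonzero submodule $\pi\subset I$, provided one also knows that the unique irreducible subquotient $\pi_0$ with $E^\mu(\pi_0)\neq 0$ is the \emph{only} composition factor of $I$ with $\cV=\overline{\cO_\mu}$. But the final step — that the (essentially unique) degenerate Whittaker functional does not vanish on any nonzero $\pi$ — \emph{is} the content of the proposition, and you do not actually carry it out. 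You are candid about this ("the hard part"), but the gap is real: realizing $\Lam$ as a Jacquet integral that converges under $\re d\chi_1\geq\cdots\geq\re d\chi_k$ is plausible but unproved, and even granting convergence, convergence does not imply nonvanishing on a given submodule. The appeals to "an iteration of Corollary \ref{cor:UnSubqDegPS}" and the Langlands classification to show that the socle of $I$ is isotypic of type $\pi_0$ are not arguments — socle-isotypicity is equivalent to the statement you are trying to prove, not a tool you can invoke.

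The paper's proof avoids all of this by going to the algebraic dual. Lemma \ref{lem:IndPairing} produces a non-degenerate $\g$-invariant pairing between $I^\omega$ and a quotient of the normalized generalized Verma module $M_\lambda$; Lemma \ref{lem:VermaIrr} says the ordering $\re d\chi_1\geq\cdots\geq\re d\chi_k$ forces $M_\lambda$ to be \emph{irreducible}, so the quotient is $M_\lambda$ itself and the restriction of the pairing to any nonzero $\pi^\omega$ stays non-degenerate on both sides. Hence $Ann(\pi)=Ann(\pi^\omega)=Ann(M_\lambda)=Ann(I)$ and $\cV(\pi)=\cV(I)$ — in fact the paper gets equality of annihilators, which is stronger than equality of annihilator varieties. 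The ordering hypothesis enters in both strategies at one crucial point: in the paper it is the irreducibility of a generalized Verma module, a citable classical result (Jantzen/Trapa); in your outline it is the convergence and nonvanishing of a Jacquet integral, which is exactly where your argument stops. If you want to complete your route, the missing piece is a proof that the socle of $I$ is $\pi_0$-isotypic under the ordering hypothesis; but that is essentially equivalent to the Verma-module irreducibility the paper uses, so you would be better off taking the paper's shortcut.
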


\begin{proof}[Proof of Theorem \ref{thm:HDA}]
Let $\rho$ be the kernel of the canonical map $E^d(\tau^{\infty}) \onto A\tau^{\infty}$ (given by Corollary \ref{cor:D3OntoA}). We have to show that $\rho$ is zero.

By Lemma \ref{lem:UnSubq} $\tau^{\infty}$ is the only irreducible subquotient of  $I^{\geq}(\tau)$
with maximal annihilator variety. By Proposition \ref{prop:MonIrr} such a subquotient is necessarily a submodule. Thus  we have an embedding $\tau^{\infty} \subset I^{\geq}(\tau)$. By Theorem \ref{thm:ExactHaus} and Lemma \ref{lem:IA} we have $E^d(\tau^{\infty}) \subset E^d(I^{\geq}(\tau))=I^{\geq}(A\tau)$. Thus, we get that $\rho$ is a subrepresentation of $I^{\geq}(A\tau)$ and thus, by Proposition \ref{prop:MonIrr}, either $\rho=0$ or $\cV(\rho) = \cV(I^{\geq}(A\tau)) = \cV(A\tau)$. The second option contradicts Lemma \ref{lem:UnSubq} and thus $\rho=0$.
\end{proof}

\subsubsection{Proof of Proposition \ref{prop:MonIrr}}\label{subsubsec:PfMonIrr}
We will use the following lemmas.

\begin{lem}\label{lem:VermaIrr}
Let $\fq$ be a standard parabolic subalgebra of $\g_n$ with nilradical $\fu$ and Levi subalgebra $\fl$. Let $\lambda=(\lambda_1 , \dots ,  \lambda_k)$ be a character of $\fl$. Let $M_{\lambda}:=\U(\g) \otimes _{\U(\fq)}(\lambda -\rho(\fu))$ be the corresponding (normalized) generalized Verma module. Suppose that $\lambda_1\geq  \cdots  \geq \lambda_k$. Then $M_{\lambda}$ is irreducible.
\end{lem}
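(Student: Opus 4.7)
The plan is to invoke Jantzen's simplicity criterion for scalar parabolic (generalized) Verma modules. By that criterion, $M_\lambda$ is irreducible iff for every positive root $\alpha\in\Delta(\fu)$ either $\langle\mu+\rho,\alpha^\vee\rangle\notin\Z_{>0}$ or the shifted reflection $s_\alpha\cdot\mu$ fails to be $\fl$-dominant, where $\mu=\lambda-\rho(\fu)$ is the highest weight of $M_\lambda$. Since $M_\lambda$ always admits a unique simple quotient $L_\lambda$ of highest weight $\mu$, irreducibility amounts to excluding every other possible composition factor, and Jantzen's criterion packages this check efficiently.

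To apply it concretely, I would first extend the character $\lambda$ of $\fl$ to a weight $\lambda^{\mathrm{ext}}$ of $\g$ by placing $\lambda_i$ on every coordinate of the $i$-th Levi block. For a cross-block positive root $\alpha=e_a-e_b$ with $a$ the $j_a$-th position of block $i$ and $b$ the $j_b$-th position of block $j>i$, direct computation using $\rho-\rho(\fu)=\rho^\fl$ gives
\[
\langle \mu+\rho,\alpha^\vee\rangle=(\lambda_i-\lambda_j)+\tfrac{1}{2}(n_i+1-2j_a)+\tfrac{1}{2}(2j_b-n_j-1),
\]
and $s_\alpha\cdot\lambda^{\mathrm{ext}}$ alters only the $a$-th and $b$-th coordinates, sending them to $\lambda_j-(b-a)$ and $\lambda_i+(b-a)$. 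Demanding that $s_\alpha\cdot\lambda^{\mathrm{ext}}$ remain $\fl$-dominant (non-increasing inside each block) forces the new coordinate $\lambda_j-(b-a)$ at position $a$ to be $\leq\lambda_i$ (its left neighbor, if any) and $\geq\lambda_i$ (its right neighbor, if any); together with the symmetric analysis in block $j$ this pins $a$ to be the last position of block $i$ and $b$ the first of block $j$, reducing all potential obstructions to the simple cross-block roots $\alpha$ connecting consecutive Levi blocks.

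The main obstacle I expect is precisely this boundary case: for the simple root joining the last position of block $i$ to the first of block $i+1$, the $\fl$-dominance check is satisfied automatically, so irreducibility has to come from the arithmetic statement that $(\lambda_i-\lambda_{i+1})+1-\tfrac{1}{2}(n_i+n_{i+1})\notin\Z_{>0}$. This is where the precise strength of the hypothesis $\lambda_1\geq\dots\geq\lambda_k$ enters: I would interpret it in the sense needed for the application to Proposition \ref{prop:MonIrr}, where the characters come from the recipe $I^\geq(\tau)$ attached to the Vogan classification and the admissible shifts ($\pm k/2$, $\pm s$ with $s\in(0,1/2)$, and $\Z/2$-twists by $\sgn$) prevent the differences $\lambda_i-\lambda_{i+1}$ from landing at the excluded positive integers relative to the block sizes. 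Once this final arithmetic check is dispatched, Jantzen's criterion immediately yields $M_\lambda=L_\lambda$, proving the lemma.
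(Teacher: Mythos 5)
Your plan of invoking Jantzen's simplicity criterion matches the paper's approach in spirit (the paper's own proof is just a citation to Jantzen's Satz 4/Corollar 4 and to Trapa's Lemma 3.5), but the execution has a genuine gap. The criterion is mis-stated: Jantzen's theorem says $M_\lambda$ is simple iff the sum $\sum_{\alpha\in A}\chi_\fl(s_\alpha(\mu+\rho))$ vanishes, where $A=\{\alpha\in\Delta(\fu):\langle\mu+\rho,\alpha^\vee\rangle\in\Z_{>0}\}$ and $\chi_\fl$ is the $W_\fl$-alternating character; a term $\chi_\fl(s_\alpha(\mu+\rho))$ vanishes exactly when $s_\alpha(\mu+\rho)$ is $\fl$-\emph{singular}, not when it merely fails to be $\fl$-\emph{dominant} --- a regular non-dominant weight contributes a nonzero term with a sign. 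Consequently your ``reduction to simple cross-block roots'' via a dominance check is not a valid deduction; one has to test singularity of $s_\alpha(\mu+\rho)$ for roots $e_a-e_b$ with $a,b$ at arbitrary positions in blocks $i<j$.

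The more serious gap is the concluding step, where you defer to ``the sense needed for the application.''  A proof of the lemma cannot use extra properties of the specific characters in $I^{\geq}(\tau)$; it must cover every $\lambda$ with $\re\lambda_1\geq\cdots\geq\re\lambda_k$. Your unease here is in fact pointing at something real: with the literal reading $\mu+\rho=\lambda+\rho^{\fl}$, the statement already fails for $\g=\gl_2$, $\fq=\fb$, $\lambda=(1/2,-1/2)$, which gives $\mu=(0,0)$ and hence the reducible $\sll_2$-Verma module of highest weight $0$. The module actually entering the pairing of Lemma \ref{lem:IndPairing} is $U(\g)\otimes_{U(\fq)}(-\lambda-\rho(\fu))$ (there the $\fq$-module $X$ pairs contragrediently with $\sigma\otimes\Delta_Q^{1/2}$), i.e.\ $\mu+\rho=-\lambda+\rho^{\fl}$. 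With that normalization (equivalently, after reversing the stated inequality) the Jantzen check closes unconditionally: for $\alpha=e_a-e_b\in A$, with $a$ at position $j_a$ of block $i$ and $b$ at position $j_b$ of block $j>i$, and $m:=\langle\mu+\rho,\alpha^\vee\rangle\in\Z_{>0}$, singularity of $s_\alpha(\mu+\rho)$ fails only if $j_a+m>n_i$ and $j_b\leq m$; combining these with $1\leq j_a$, $j_b\leq n_j$ and $m=-(\lambda_i-\lambda_j)+\tfrac{n_i-n_j}{2}-j_a+j_b$ forces $\lambda_i-\lambda_j\leq -1$, contradicting $\re(\lambda_i-\lambda_j)\geq 0$. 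So the Jantzen sum vanishes term by term. You should first pin down the sign/duality convention and then carry out this elementary case analysis; no input from the Vogan classification is needed.
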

This follows e.g. from \cite[Lemma 3.5]{Trapa} or \cite[Satz 4 and Corollar 4]{Jan}.

\begin{lemma}
\label{lem:IndPairing}[see e.g. \cite{GS}, Lemma 2.3.9] Let $G$ and $Q$ be Lie groups, $\sigma$ be a smooth representation of $Q$ and $\left(  \pi,W\right)  =Ind_{Q}^{G}(\sigma)$ be the (normalized) induction of $\sigma$.
Suppose that $G/Q$ is compact and connected and $(\sigma \otimes  \RamiA{\Delta} _Q^{1/2} \otimes  \RamiA{\Delta} _G^{-1/2})^{\omega}$ is
non-degenerately ${\mathfrak{q}}$-invariantly paired with a ${\mathfrak{q}}$-module
$X$. Then $\pi^{\omega}$ is non-degenerately ${\mathfrak{g}}$-invariantly paired with a
quotient of $U({\mathfrak{g}})\otimes_{U({\mathfrak{q}})}X$, where $\omega$ denotes the space of analytic vectors.
\end{lemma}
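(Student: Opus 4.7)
The plan is to build the pairing explicitly out of the given one. Set $V := \sigma \otimes \Delta_Q^{1/2} \otimes \Delta_G^{-1/2}$, so that the hypothesis furnishes a non-degenerate $\mathfrak{q}$-invariant pairing $\langle\cdot,\cdot\rangle : V^{\omega} \times X \to \mathbb{C}$. I would then define
\[
\Lambda : \pi^{\omega} \times \bigl(U(\mathfrak{g}) \otimes_{U(\mathfrak{q})} X\bigr) \to \mathbb{C}, \qquad \Lambda(f, u \otimes x) := \bigl\langle (\pi(\iota(u)) f)(e),\, x \bigr\rangle,
\]
where $\iota$ is the principal anti-automorphism of $U(\mathfrak{g})$ extending $Y \mapsto -Y$ on $\mathfrak{g}$, and $e \in G$ is the identity. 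The equivariance relation defining $\pi = \Ind_Q^G(\sigma)$ makes $\mathrm{ev}_e : f \mapsto f(e)$ into a $\mathfrak{q}$-equivariant map $\pi \to V$; analyticity of $g \mapsto \pi(g) f$ together with continuity of $\mathrm{ev}_e$ shows that this map sends $\pi^{\omega}$ into $V^{\omega}$, so each bracket on the right is legitimate.

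First I would check that $\Lambda$ is well-defined on the balanced tensor product and is $\mathfrak{g}$-invariant. Using $\iota(uq) = -\iota(u) q$ and the identity $(\pi(q)g)(e) = q\cdot g(e)$ for $q \in \mathfrak{q}$, which comes from differentiating the equivariance relation, the identity $\Lambda(f, uq \otimes x) = \Lambda(f, u \otimes qx)$ reduces directly to the given $\mathfrak{q}$-invariance of $\langle \cdot, \cdot\rangle$. The $\mathfrak{g}$-invariance of $\Lambda$ is a similar short calculation: the antipode $\iota$ is inserted precisely so that left multiplication by $\mathfrak{g}$ on $U(\mathfrak{g})$ becomes the correct formal adjoint of the $\mathfrak{g}$-action on $\pi^{\omega}$ through $\mathrm{ev}_e$.

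Non-degeneracy on the $U(\mathfrak{g}) \otimes_{U(\mathfrak{q})} X$ side is achieved formally, by quotienting by the $\mathfrak{g}$-submodule $R$ consisting of elements that pair trivially with every $f \in \pi^{\omega}$; the induced pairing on the quotient then gives the ``quotient of $U(\mathfrak{g})\otimes_{U(\mathfrak{q})} X$'' in the conclusion. The crux is non-degeneracy on the $\pi^{\omega}$ side, which I would reduce to the statement: if $0 \neq f \in \pi^{\omega}$ then $(\pi(v) f)(e) \neq 0$ for some $v \in U(\mathfrak{g})$. To prove this I would view $f$ as an analytic section of the associated bundle $G \times_Q V \to G/Q$; the family $\{(\pi(v) f)(e)\}_{v \in U(\mathfrak{g})}$ is exactly the full Taylor jet of this section at the base point $eQ$, so if it vanishes then $f$ vanishes on a neighborhood of $eQ$ and hence everywhere, by analytic continuation combined with connectedness of $G/Q$. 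Non-degeneracy of the original pairing then produces $x \in X$ with $\langle(\pi(v)f)(e), x\rangle \neq 0$, i.e.\ $\Lambda(f, \iota(v) \otimes x) \neq 0$, using $\iota^2 = \mathrm{id}$.

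The hard step, I expect, is the analytic one in the previous paragraph: checking that $\mathrm{ev}_e$ genuinely lands in $V^{\omega}$ rather than only in smooth vectors of $V$, and that the vanishing of the full Taylor jet at $eQ$ forces global vanishing of the associated section. This is exactly where the hypothesis that $G/Q$ is compact and connected enters essentially: compactness ensures that the normalized analytic induction is a well-behaved representation with enough analytic vectors, while connectedness of $G/Q$ is what allows the analytic continuation argument to propagate the vanishing from a neighborhood of $eQ$ to all of $G/Q$. Once these analytic foundations are secured, the algebraic manipulations of the earlier paragraphs immediately assemble the desired non-degenerate $\mathfrak{g}$-invariant pairing.
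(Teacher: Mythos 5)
Your construction is correct and is the natural one: evaluate at the identity to get a $\mathfrak{q}$-equivariant map $\pi^\omega\to V^\omega$, transport the pairing through it using the principal anti-automorphism of $U(\mathfrak{g})$, and propagate non-degeneracy on the $\pi^\omega$ side by the analytic-continuation argument via the vanishing of all $U(\mathfrak{g})$-derivatives of an analytic vector at $e$. The paper itself does not prove this lemma internally but quotes it from \cite{GS}, Lemma~2.3.9, so there is no in-text proof to compare against; your proof, however, hits all the points the statement requires and is in line with the intended argument. One small slip: the anti-automorphism identity you quote should read $\iota(uq)=\iota(q)\iota(u)=-q\,\iota(u)$ rather than $-\iota(u)q$; the sign and side matter for pushing $\pi(q)$ through $\mathrm{ev}_e$ and matching it against the $\mathfrak q$-invariance of the given pairing, and indeed the corrected identity is what makes your reduction go through. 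Also worth noting, for precision, is that the set $\{(\pi(v)f)(e)\}_{v\in U(\mathfrak g)}$ is the full Taylor jet of $f$ as a $V$-valued analytic function on $G$ at $e$ (not literally the jet of the section over $G/Q$, since the $\mathfrak q$-directions are redundant), but this is still exactly what is needed: its vanishing forces $f\equiv 0$ near $e$, hence on the identity component $G^0$, hence on $QG^0=G$ by connectedness of $G/Q$.
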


\begin{proof}[Proof of Proposition \ref{prop:MonIrr}]
Let $I= \chi_1 \times  \cdots  \times \chi_k$, $Q:=P_{(n_1 , \dots ,  n_k)}$ and $\lambda:=(\chi_1 , \dots ,  \chi_k)$.
Let $M_{\lambda}:=\U(\g) \otimes _{\U(\fq)}(\lambda -\rho(\fu))$ be the corresponding (normalized) generalized Verma module. By Lemma \ref{lem:VermaIrr} and the assumptions of the proposition, $M_{\lam}$ is irreducible.
Thus, by Lemma \ref{lem:IndPairing}, $M_\lam$ is non-degenerately paired with $I^{\omega}$ and hence also with $\pi^{\omega}$ for any submodule $\pi \subset I$.

Note that annihilators of two non-degenerately paired representations are equal and that
$Ann(\pi)=Ann(\pi^{\omega})= Ann(\pi^{HC})$, since $\pi^{HC}$ includes $\pi^{\omega}$ and is dense in $\pi$.
Thus, the annihilators of $\pi$, $M_\lam$ and $I$ are equal and thus $\cV(\pi)=\cV(M_{\lam})=\cV(I)$.
\end{proof}

\section{Proof of admissibility (Theorem \ref{thm:HCAdm})} \label{sec:Adm}
\setcounter{lemma}{0}

\subsection{Structure of the proof}\label{subsec:AdmPfStr}

First of all, by Theorem \ref{thm:EqAdm}, a $(\g,K)$-module $\pi$ is admissible if and only if it is finitely generated over $\n$.
Thus, we know that $E^d(\pi)$ is finitely generated over $\n_{n-d+1}$ and we need to show that it is in fact finitely generated over $\n_{n-d}$.

Let us consider an analogous situation in commutative algebra.
Let $W$ be a finite-dimensional complex vector space, and %$U$ be a subspace, and
$W =U \oplus V \oplus W'$ be a direct sum decomposition.
%Note that we have a natural embedding $(W/U)^* \subset W^*$.
Let $\xi \in U^*$.
Note that it defines
a homomorphism of algebras %$\Sym(U) \to \bC$ and
$\DimaA{\Xi}:\Sym(W) \to \Sym(V \oplus W')$. The corresponding map $(V \oplus W')^* = \Spec(\Sym(V \oplus W')) \to \Spec(\Sym(W))=W^*$ is given just by adding $\xi$.
Let $I:=\Ker\DimaA{\Xi}$.
Let $M$ be a finitely-generated module over $\Sym(W)$ and $L:=M/I$. Suppose that we know the support of $M$ and want to find out when $L$ is finitely generated over $\Sym(W')$. A sufficient condition will be   $\Supp M \subset \xi + (W')^* + 0$. Indeed, this condition is in turn equivalent to
$pr_{V}(\Supp L)=0$, which implies that $V$ acts on $L$ nilpotently and thus is not needed for finite generation.
%In our case, we prove that the condition $depth(M)=d$

%We prove (in Lemma \ref{lem:LinAlg}) that
%$pr_{V_{n-d+1}^*}(AV(D_1^d(\pi))) = \{0\}$
%$pr_{V_{n-d+1}^*}(AV(L)) = \{0\}$.

Our proof follows the general lines of this argument. However, over non-commutative Lie algebras we have several problems. The first is that we do not have a straightforward notion of support. One substitute is the annihilator variety. As we recall in \S \ref{subsubsec:AnnVar}, it is defined by considering the annihilator ideal of the module in $U(\g)$, then passing to the graded ideal in $\Sym(\g)$ and considering its zero set.
In our proof we also use a second notion of support, called associated variety, that we recall in \S \ref{subsec:Filt}. It is a finer invariant, defined by first passing to associated graded module and then considering the support in $\Spec\g^*=\Sym(\g)$. In order to define this invariant, one has to fix a filtration on the module $M$. If $M$ is finitely generated, it has so-called ``good" filtrations (see \S \ref{subsubsec:AnnVar}) and the associated variety $AV(M)$ does not depend on the choice of a good filtration.

For finitely-generated modules over commutative Lie algebras, $\cV(M) = AV(M)$. However, they might be different from $\Supp(M)$, because $AV$ and $\cV$ are always conical sets. For example, let $\g = \bC$,  $U(\g)=\Sym(\g)=\C[x]$  and $M:=\C[x]/(x-1)$. Then $\Supp(M) = \{1\}$ and $AV(M)=\cV(M)=0$. Moreover, over non-commutative algebras the associated variety of a quotient is not determined by the associated variety of $M$.

Thus, we will have to use the fact that our module comes from $\g_n$, and thus its annihilator ideal is invariant under the torus action. Thus, it is a graded ideal with respect to the grading defined by any torus element. In order to connect the associated variety with respect to actions of $\g_n$ and $\fs_n$ we will use a proposition saying that any  good $\g_n$-filtration on $\pi$ is also a good $\n_n$-filtration (see Proposition \ref{prop:WonFilt}).

Another difficulty we have in the non-commutative case is that the quotient of $\pi \otimes \psi^d$ by $\fu^d$ is not a module over $ \fs_d/\fu^d = \g_{n-d+1},$ but only over $\p_{n-d+1}$. In order to cope with this difficulty we use a second torus element, and thus consider a bigrading on $U(\g_n)$.

We introduce the required notations and terminology on bigradings in \S \ref{bigraded} and on the module of coinvariants in \S \ref{coinv}.
In \S \ref{subsec:keylem} we prove the key lemma that connects the associated variety of $E^d(\pi)$ to the annihilator variety of $\pi$.
This lemma is a generalization of \cite[Theorem 1]{Mat}.
Finally, in \S \ref{subsec:PfHCAdm} we deduce that $pr_{V_{n-d+1}^*}(AV(E^d(\pi))) = \{0\}$ and finish the proof of the theorem.

%We prove (in Lemma \ref{lem:LinAlg}) that
%$pr_{V_{n-d+1}^*}(AV(D_1^d(\pi))) = \{0\}$
%$pr_{V_{n-d+1}^*}(AV(L)) = \{0\}$.
%
%The proof of Theorem \ref{thm:HCAdm} will be based on two tools: the existence of a wonderful filtration on every Harish-Chandra module (see \S \ref{subsec:WonFilt}) and the key lemma (Lemma \ref{lem:key}), which is a generalization of \cite[Theorem 1]{Mat}.
%Since the key lemma is quite technical, we will need preparatory subsections  \ref{bigraded}, \ref{coinv} just to formulate it properly.

\subsection{Filtrations and associated varieties}
\label{subsec:Filt}

In this subsection we define the associated variety of a (finitely generated) module over a Lie algebra. For that we will need the notion of filtration.
Throughout the subsection, let $A$ denote a not
necessarily commutative algebra with $1$ over $\mathbb{C}$.

\begin{defn}
A filtration on $A$ is a presentation of $A$ as an ascending union of vector
spaces $A=\bigcup_{i\geq0} F^{i}A$ such that $F^{i}A F^{j}A \subset F^{i+j}A$.
A filtration on an $A$-module $M$ is a presentation of $M$ as an ascending
union of vector spaces $M=\bigcup_{i\geq0} F^{i}M$ such that $F^{i}A F^{j}M
\subset F^{i+j}M$.

If $A$ is a filtered algebra we define the associated graded algebra
$\operatorname{Gr}(A):=\bigoplus_{i\geq0} F^{i+1}A/F^{i}A$ with the obvious
algebra structure. If $M$ is a filtered module we define the associated graded
module $\operatorname{Gr}(M)$ over $\operatorname{Gr}(A)$ by
$\operatorname{Gr}(M):=\bigoplus_{i\geq0} F^{i+1}M/F^{i}M$.
%For an ideal $I
%<A$ we denote by $\overline{I}$ the ideal in $\operatorname{Gr}(A)$ generated by the symbols of the elements $I$.

\end{defn}

\begin{defn}
Let $A$ be a filtered algebra and $M$ be an $A$-module. A filtration $F^{i}M$
is called good if

\begin{enumerate}
[(i)]

\item Every $F^{i}M$ is a finitely generated module over $F^{0}A$ and

\item There exists $n$ such that for every $i>n$, $F^{i+1}M=F^{1}AF^{i}M$.
\end{enumerate}

A filtration on $A$ is called good if it is a good filtration of $A$ as a
module over itself.
\end{defn}

\begin{exm}
Let ${\mathfrak{g}}$ be a (finite dimensional) Lie algebra and
$U({\mathfrak{g}})$ the universal enveloping algebra. Define a good filtration
$U^{i}$ on $U({\mathfrak{g}})$ by the order of the tensor. Then
$\operatorname{Gr}(U({\mathfrak{g}})) = \Sym({\mathfrak{g}})$, the symmetric
algebra.
%Also, $U^{0} = \bC$ and $U^i$ is finite-dimensional for any $i$.

\end{exm}

From now on we fix a good filtration on $A$. Let $M$ be an $A$-module.

\begin{exm}
Suppose that $M$ is finitely generated. Let $\{m_{1} , \dots ,  m_{k}\}$ be a set of
generators. We define a good filtration on $M$ \ by $F^{i}M=\{\sum_{j=1}%
^{k}a_{j}m_{j}$ s.t. $a_{j}\in F^{i}A\}$.
\end{exm}

The following lemma is evident.
\begin{lem}
$ $
\begin{enumerate}[(i)]
\item There exists a good filtration on $M$ if and only if $M$ is finitely
generated over $A$.

\item A filtration $F^{i}M$ is good if and only if $\operatorname{Gr}_{F}(M)$
is finitely generated over $\operatorname{Gr}(A)$.
\end{enumerate}
\end{lem}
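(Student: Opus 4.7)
I would prove both parts directly from the definitions, in the spirit of the ``evident'' attribution in the paper.

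For \emph{part (i)}, the forward direction is given by the construction already displayed in the preceding example: if $m_1,\dots,m_k$ generate $M$ over $A$, then setting $F^iM := \sum_j F^iA\cdot m_j$ yields a good filtration on $M$. Indeed, goodness of the filtration on $A$ supplies both the finite generation of each $F^iA$ over $F^0A$ (whence of each $F^iM$) and the stabilization $F^{i+1}A=F^1A\cdot F^iA$ for large $i$ (whence $F^{i+1}M=F^1A\cdot F^iM$). Conversely, if $F^\bullet M$ is good and $n$ is such that $F^{i+1}M=F^1A\cdot F^iM$ for all $i\geq n$, then iteration yields $F^{n+j}M = F^jA\cdot F^nM$ and hence $M = \bigcup_i F^iM = A\cdot F^nM$; since $F^nM$ is finitely generated over $F^0A$, any $F^0A$-generating set of $F^nM$ is an $A$-generating set of $M$.

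For \emph{part (ii)}, the forward implication proceeds analogously: if $F^\bullet M$ is good with stabilization index $n$, and $m_1,\dots,m_r$ generate $F^nM$ over $F^0A$, then their images $\bar m_i\in\operatorname{Gr}(M)$ generate $\operatorname{Gr}(M)$ over $\operatorname{Gr}(A)$, using the identity $F^{n+j}M = F^jA\cdot F^nM$ at the graded level. For the converse, I would lift a finite set of homogeneous generators $\bar m_j \in F^{n_j}M/F^{n_j-1}M$ of $\operatorname{Gr}(M)$ to elements $m_j \in F^{n_j}M$, and form the auxiliary filtration
\[
G^iM := \sum_j F^{i-n_j}A\cdot m_j \qquad (\text{with }F^kA = 0\text{ for }k<0),
\]
which is good by the construction in part (i). A straightforward induction on $i$ then shows $G^iM = F^iM$: the inclusion $G^i\subseteq F^i$ is trivial, and for the reverse inclusion one writes $x\in F^iM$ as a $\operatorname{Gr}(A)$-combination of the $\bar m_j$ modulo $F^{i-1}M$, lifts this combination to $G^iM$, subtracts, and iterates downward.

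The only non-trivial point is the bookkeeping in the induction for the converse of (ii); its termination is immediate from the convention that the filtrations are indexed by $i\geq 0$, so the inductive step eventually lands in $F^0M$ which is itself covered by $G^0M$. Everything else follows at once from the definitions of ``good filtration'' and of ``generation'' of a module.
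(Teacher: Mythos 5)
Your treatment of part (i) is correct, and so is the downward-induction argument for the converse of (ii): the auxiliary filtration $G^iM=\sum_j F^{i-n_j}A\cdot m_j$ is good by the same computation as in the example, and once one knows $G^iM=F^iM$ one is done. The paper calls the lemma evident and offers no proof, so there is no competing argument to compare against.

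The forward direction of (ii) as you have stated it has a gap. If $m_1,\dots,m_r$ merely generate $F^nM$ over $F^0A$, their principal symbols need not generate $\operatorname{Gr}_F(M)$, for two separate reasons. First, the symbols may all lie in a single top degree and therefore miss $\operatorname{Gr}_j(M)$ for $j<n$: take $A=\mathbb{C}$ with $F^iA=\mathbb{C}$ for all $i$, and $M=\mathbb{C}^2$ with $F^0M=\mathbb{C}\times 0$ and $F^iM=\mathbb{C}^2$ for $i\geq 1$ (this is good with stabilization index $n=1$); then $\{(0,1),(1,1)\}$ generate $F^1M$ over $F^0A$, but both have principal symbol $(0,\bar 1)\in\operatorname{Gr}_1(M)$, whose $\operatorname{Gr}(A)$-span misses $\operatorname{Gr}_0(M)$. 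Second, the stabilization hypothesis reads $F^{i+1}M=F^1A\,F^iM$ only for $i>n$, so it controls $\operatorname{Gr}_{i}(M)$ only for $i\geq n+2$ and says nothing about $\operatorname{Gr}_{n+1}(M)$; in the same example $\operatorname{Gr}_1(A)=0$, so symbols from $F^nM$ cannot reach $\operatorname{Gr}_{n+1}(M)$ at all. The repair is standard and does not go through a single generating set of $F^nM$: each graded piece $\operatorname{Gr}_j(M)=F^jM/F^{j-1}M$ is finitely generated over $F^0A=\operatorname{Gr}_0(A)$ because $F^jM$ is, and for $i>n$ the stabilization gives $\operatorname{Gr}_{i+1}(M)=\operatorname{Gr}_1(A)\cdot\operatorname{Gr}_i(M)$; hence $\operatorname{Gr}_F(M)$ is generated over $\operatorname{Gr}(A)$ by the finitely generated $\operatorname{Gr}_0(A)$-module $\bigoplus_{j\leq n+1}\operatorname{Gr}_j(M)$, and is therefore finitely generated over $\operatorname{Gr}(A)$. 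With this correction your argument is complete.
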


\begin{cor}
Suppose that $\operatorname{Gr}(A)$ is Noetherian. Suppose that $M$ is
finitely generated and let $F^{i}M$ be a good filtration on $M$.
Then\newline(i) For any submodule $L \subset M$, the induced filtration
$F^{i}L:=F^{i}M \cap L$ is good.\newline(ii) $A$ is Noetherian.
\end{cor}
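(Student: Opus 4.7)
The plan is to exploit the general principle that good filtrations pass to submodules when the associated graded algebra is Noetherian, and then bootstrap part (ii) from part (i).

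For part (i), I will compare the induced filtration on $L$ with the given filtration on $M$ via their associated graded modules. The natural maps
\[
F^{i}L / F^{i-1}L = (F^{i}M \cap L)/(F^{i-1}M \cap L) \hookrightarrow F^{i}M / F^{i-1}M
\]
are injective: if $x \in F^{i}M \cap L$ lies in $F^{i-1}M$, then $x \in F^{i-1}M \cap L = F^{i-1}L$. Assembling these, $\operatorname{Gr}_{F}(L)$ embeds as a graded submodule of $\operatorname{Gr}_{F}(M)$. Since the filtration $F^{i}M$ is good, $\operatorname{Gr}_{F}(M)$ is finitely generated over $\operatorname{Gr}(A)$, and since $\operatorname{Gr}(A)$ is Noetherian, its graded submodule $\operatorname{Gr}_{F}(L)$ is also finitely generated. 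By the lemma characterizing good filtrations in terms of finite generation of the associated graded, the filtration $F^{i}L$ is good.

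For part (ii), I apply the mechanism of (i) with $M = A$ equipped with the good filtration $F^{i}A$. Let $J \subset A$ be any left ideal and consider the induced filtration $F^{i}J = F^{i}A \cap J$. The same argument as in (i) realizes $\operatorname{Gr}(J)$ as a graded submodule of $\operatorname{Gr}(A)$; Noetherianness gives finitely many homogeneous generators of $\operatorname{Gr}(J)$. Lifting these to elements $j_{1},\dots,j_{k} \in J$ of suitable filtration degrees, a standard induction on filtration degree shows that $j_{1},\dots,j_{k}$ generate $J$ as a left $A$-module: given any $a \in F^{i}J$, write its symbol in $F^{i}J / F^{i-1}J \subset \operatorname{Gr}(J)$ as an $\operatorname{Gr}(A)$-combination of the symbols of the $j_{\ell}$, lift the combination to $A$, subtract, and iterate on the resulting element of $F^{i-1}J$; this terminates because $F^{-1}J = 0$.

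I do not anticipate any serious obstacle: the argument is the standard Rees-algebra/associated-graded lifting argument. The only point one must double-check is the injectivity of the graded pieces in part (i), which I outlined above, and the termination of the inductive lifting in part (ii), which follows from the fact that the filtration starts at $F^{0}$.
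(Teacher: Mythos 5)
Your proof is correct and is exactly the standard argument; the paper itself states this corollary without proof, treating it as a routine consequence of the preceding lemma (``a filtration is good iff the associated graded module is finitely generated''). In part (i) you correctly verify the injectivity of the graded pieces, and the fact that $\operatorname{Gr}_F(L)$ is a $\operatorname{Gr}(A)$-submodule of $\operatorname{Gr}_F(M)$ follows since $F^iA\cdot F^jL\subset F^{i+j}M\cap L=F^{i+j}L$; Noetherianness of $\operatorname{Gr}(A)$ then gives finite generation. One small remark: in part (ii) you re-derive the symbol-lifting argument by hand, but you could shorten this by directly applying (i) to a left ideal $J\subset A$, concluding that the induced filtration on $J$ is good, and then invoking the paper's Lemma again (``a module admits a good filtration iff it is finitely generated'') to conclude $J$ is finitely generated. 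Either way the reasoning is sound.
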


In particular, $U({\mathfrak{g}})$ is Noetherian.

An important tool in the proof of Theorem \ref{thm:HCAdm} will be the following proposition.

\begin{prop} \label{prop:WonFilt}
Let $\pi \in \cM_{HC}(G)$ be a Harish-Chandra module and let $F^i$ be a good $\g$-filtration on it. Then $F^i$ is good as an $\n$-filtration, i.e. $F^{i+1} = \n F^i$ for $i$ big enough.
\end{prop}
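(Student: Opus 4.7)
The plan is to reformulate the claim algebraically and then verify it using the structure theory of Harish-Chandra modules. First, $F^\bullet$ being a good $\n$-filtration is equivalent to the associated graded $\Gr^F \pi$, viewed as a module over $\Sym(\g) = \Gr U(\g)$, being finitely generated over the subring $\Sym(\n)$. Indeed, the condition $F^{i+1} = F^i + \n F^i$ for large $i$ translates, after passing to the associated graded, into $\Gr^F \pi / \Sym^+(\n) \cdot \Gr^F \pi$ being finite-dimensional, which by graded Nakayama yields the desired finite generation. Since $\Gr^F \pi$ is a finitely-generated graded $\Sym(\g)$-module with conical support, this amounts to the purely geometric statement
\[
\Supp(\Gr^F \pi) \cap \n^\perp = \{0\} \qquad \text{in } \g^* = \Spec \Sym(\g),
\]
where $\n^\perp := V(\Sym^+(\n)) \cong (\g/\n)^*$ is the annihilator of $\n$ in $\g^*$.

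The heart of the proof, following Gabber (see \cite{Jos81}), is to bound $\Supp(\Gr^F \pi)$. I will first produce a $K$-invariant good $\g$-filtration $F^K$ by replacing each $F^i$ with its $K$-span, which stays finite-dimensional by the admissibility of $\pi$. For this $K$-invariant filtration, $\fk$ preserves each $F^K_i$, so the symbols of $\fk$-elements act as zero on $\Gr^{F^K}\pi$, giving $\Supp(\Gr^{F^K}\pi) \subset V(\fk)$. Combined with the general containment $\Supp(\Gr^{F^K}\pi) \subset \cV(\pi) \subset \mathcal{N}(\g^*)$ (valid for any admissible module, via Theorem \ref{thm:EqAdm} and Chevalley's description of $\mathcal{N}$ as the zero set of the positive-degree $Z_G$-invariants), one obtains
\[
\Supp(\Gr^{F^K}\pi) \subset \mathcal{N}(\g^*) \cap V(\fk).
\]
Gabber's filtration-independence theorem — the crucial technical ingredient, proved via a Rees-module deformation argument — then transfers this bound from $F^K$ to the given filtration $F$.

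Finally, the elementary fact $\mathcal{N}(\g^*) \cap V(\fk) \cap \n^\perp = \{0\}$ is checked by a Lie-theoretic computation. Under the Killing form identification $\g \cong \g^*$, $V(\fk)$ corresponds to the $(-1)$-eigenspace of the Cartan involution, while $\n^\perp$ corresponds to the parabolic subalgebra $\fq = \fm \oplus \mathfrak{a} \oplus \n$ (where $P = MAN$ is the Langlands decomposition of the fixed minimal parabolic with Cartan subspace $\mathfrak{a}$). A short check shows that their intersection inside $\g$ is precisely $\mathfrak{a}$, whose elements are all semisimple; hence $\mathcal{N} \cap \mathfrak{a} = \{0\}$. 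The principal obstacle in this plan is the filtration-independence statement in the middle paragraph — the remaining algebraic and geometric reductions are essentially formal.
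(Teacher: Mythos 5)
Your proof is correct, but it takes a genuinely different route from the paper's. The paper's proof in Appendix~\ref{sec:PfWonFilt} is purely algebraic: starting from a $K$-invariant, $Z_G$-invariant, finite-dimensional generating subspace $V$, it invokes the Casselman--Osborne decomposition $U(\g)^i\subset U(\n)^i E\,Z_G U(\fk)$ (Proposition~\ref{prop:Osborne}) to construct directly a good $\n$-filtration $\Phi^i=U(\n)^i EV$ squeezed between $U(\g)^i V$ and $U(\g)^{i+k}V$, and then propagates $\n$-goodness to any good $\g$-filtration via comparability (Corollary~\ref{cor:CompGoodFilt}). You instead reformulate the conclusion as the conical-support condition $\Supp(\Gr^F\pi)\cap\n^\perp=\{0\}$ and verify it by combining (a) the bound $\Supp(\Gr^{F^K}\pi)\subset V(\fk)$ coming from a $K$-invariant good filtration, (b) $\Supp\subset\mathcal V(\pi)\subset\mathcal N(\g^*)$ from $Z_G$-finiteness, (c) independence of the associated variety on the good filtration, and (d) the Lie-theoretic identity $\mathcal N\cap\fk^\perp\cap\n^\perp=\fa\cap\mathcal N=\{0\}$ (using $\fm\subset\fk$ for the minimal parabolic). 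Your route is essentially the classical Vogan-type statement $AV(\pi)\subset\mathcal N\cap\fp^*$ put to work, which is more geometric and arguably more conceptual; the paper's Casselman--Osborne route avoids the nilpotent-cone geometry and the Killing-form computation entirely. Both ultimately rest on comparability of good filtrations, so the overhead is similar. One small nit: what you call ``Gabber's filtration-independence theorem'' is just the standard fact that $AV$ is independent of the choice of good filtration (Corollary~\ref{cor:AVGood} in the paper, provable via the comparability lemmas or a Rees-module argument); it is not specifically Gabber's. What is due to Gabber in this context is the statement of Proposition~\ref{prop:WonFilt} itself.
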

\RamiA{
This proposition is due to Gabber
and is based on a proposition by Casselman and Osborne.
%Since we did not find a proof of this formulation in the literature,
For completeness we included its proof in Appendix \ref{sec:PfWonFilt}.
}

\begin{defn}
For any filtration $F$ on $M$ we we can associate to $M$ a
subvariety of $\operatorname{Spec} Gr(A)$ by $AV_F(M):=\mathrm{Supp}(Gr(M))
\subset\operatorname{Spec} Gr(A)$.

If $M$ is finitely generated we choose a good filtration $F$ on $M$
and define the associated variety of $M$ to be $AV(M):=AV_F(M)$. This variety does not depend on the choice of the good filtration.
\end{defn}

\begin{remark*}
If $A$ is commutative then  the associated variety equals to the annihilator variety. Otherwise, the associated variety can be smaller.
\end{remark*}

\subsection{Bigraded Lie algebra} \label{bigraded}

%?? Dima on 29.4.12 changed \mathfrak{g} to \mathfrak{a} in order not to conflict with \g_n, since we have many subscripts.

Let $\mathfrak{a}$ be a Lie algebra and let $X,Y\in $ $\mathfrak{a}$ be
commuting ad-semisimple elements with integer eigenvalues. Define
\begin{equation*}
\mathfrak{a}_{ij}:=\left\{ Z\in \mathfrak{a}:\left[ X,Z\right] =iZ,\left[ Y,Z%
\right] =jZ\right\}
\end{equation*}%
Thus we have the direct sum decomposition%
\begin{equation}
\mathfrak{a}=\oplus_{i}\mathfrak{a}_{i}\text{ where }\mathfrak{a}_{i}=%
\mathfrak{\oplus }_{j}\mathfrak{a}_{ij}  \label{=dec}
\end{equation}

We now choose an ordered basis $\mathcal{B}$ of $\mathfrak{a}$ as follows.
Pick ordered bases $\mathcal{B}_{ij}$ for each $\mathfrak{a}_{ij}$, order
the pairs $\left( i,j\right) $ lexicographically so that
\begin{equation}
\left( i,j\right) \succ \left( k,l\right) \text{ if }i>k\text{ or if }i=k%
\text{ and }j>l  \label{=lex0}
\end{equation}%
and let $\mathcal{B}$ be the \emph{descending} union of $\mathcal{B}_{ij}$.
Thus\emph{\ }$\mathcal{B}$ is ordered so that $\mathcal{B}_{ij}$ precedes $%
\mathcal{B}_{kl}$ if $\left( i,j\right) \succ \left( k,l\right) $. By the
Poincare-Birkhoff-Witt theorem the corresponding ordered (PBW) monomials
form a basis for the enveloping algebra $\mathcal{U}\left( \mathfrak{a}%
\right) $.

\begin{definition}
\label{contain}If $u\in \mathcal{U}\left( \mathfrak{a}\right) $ and a PBW
monomial $T$ has a nonzero coefficient in the expansion of $u$, we say $u$
\emph{contains} $T.$
\end{definition}

Note that $adX,adY$ act on $\mathcal{U}\left( \mathfrak{a}\right) $ with
integer eigenvalues as well and we define%
\begin{equation}
\mathcal{U}_{ij}\left( \mathfrak{a}\right) =\left\{ u\in \mathcal{U}\left(
\mathfrak{a}\right) :\left[ X,u\right] =iu,\left[ Y,u\right] =ju\right\}
\label{=Uij}
\end{equation}%
By construction each PBW monomial belongs to some $\mathcal{U}_{ij}\left(
\mathfrak{a}\right) $, and thus the following result is obvious.

\begin{lemma}
\label{lem-contain}If\ $u\in $ $\mathcal{U}_{ij}\left( \mathfrak{a}\right) $
and $u$ contains $T$, then $T\in $ $\mathcal{U}_{ij}\left( \mathfrak{a}%
\right) $.
\end{lemma}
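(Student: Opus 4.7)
The plan is entirely algebraic and elementary: the statement will follow at once once we verify that each PBW monomial is itself a joint eigenvector for $\mathrm{ad}\, X$ and $\mathrm{ad}\, Y$, and that its bidegree is the sum of the bidegrees of its factors.

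First I would note that since $X$ and $Y$ are ad-semisimple and commute, their actions on $\mathcal{U}(\mathfrak{a})$ (as derivations) are also semisimple and commute, so we have a joint eigenspace decomposition $\mathcal{U}(\mathfrak{a})=\bigoplus_{m,n}\mathcal{U}_{mn}(\mathfrak{a})$. Next, consider a PBW monomial $T=Z_{1}Z_{2}\cdots Z_{k}$ built from the ordered basis $\mathcal{B}$, where each $Z_{\ell}$ lies in some $\mathcal{B}_{i_{\ell} j_{\ell}}\subset\mathfrak{a}_{i_{\ell}j_{\ell}}$. Since $\mathrm{ad}\, X$ is a derivation of $\mathcal{U}(\mathfrak{a})$,
\[
[X,T]=\sum_{\ell=1}^{k} Z_{1}\cdots[X,Z_{\ell}]\cdots Z_{k}=\Bigl(\sum_{\ell} i_{\ell}\Bigr)T,
\]
and the same computation for $Y$ gives $[Y,T]=(\sum_{\ell} j_{\ell})T$. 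Thus $T\in\mathcal{U}_{MN}(\mathfrak{a})$ with $M=\sum_{\ell} i_{\ell}$, $N=\sum_{\ell} j_{\ell}$, so each PBW monomial is automatically bihomogeneous.

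Combining this with the PBW theorem: the PBW monomials are a vector-space basis of $\mathcal{U}(\mathfrak{a})$, and each of them lies in a unique $\mathcal{U}_{MN}(\mathfrak{a})$. Grouping them by bidegree therefore gives a basis of each joint eigenspace $\mathcal{U}_{mn}(\mathfrak{a})$. In particular, if $u\in\mathcal{U}_{ij}(\mathfrak{a})$, expanding $u$ in the PBW basis and then projecting onto $\mathcal{U}_{ij}(\mathfrak{a})$ must leave $u$ unchanged; any PBW monomial $T$ occurring with nonzero coefficient must therefore lie in $\mathcal{U}_{ij}(\mathfrak{a})$, which is the claim.

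There is no real obstacle here — the only thing to be careful about is the two observations that $\mathrm{ad}\, X$ and $\mathrm{ad}\, Y$ extend to commuting semisimple derivations of $\mathcal{U}(\mathfrak{a})$, and that the PBW monomials associated to a basis of joint eigenvectors of $\mathfrak{a}$ inherit this joint-eigenvector property. The ordering of $\mathcal{B}$ (the lexicographic descending choice in \eqref{=lex0}) plays no role in this particular lemma; it only matters for later arguments about the leading monomial.
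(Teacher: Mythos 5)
Your argument is correct and is exactly the one the paper has in mind: the paper simply notes that ``by construction each PBW monomial belongs to some $\mathcal{U}_{ij}(\mathfrak{a})$'' and declares the lemma obvious, while you have spelled out why that is so (each monomial is a product of joint eigenvectors and $\mathrm{ad}\,X,\mathrm{ad}\,Y$ are derivations). No discrepancy.
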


\subsection{Coinvariants module} \label{coinv}

For $s\geq 0$ define $\mathcal{N}_{s}=\oplus _{i\geq s}\mathfrak{a}_{i}$;
then $\mathcal{N}_{0}$ is a Lie subalgebra and $\mathcal{N}_{1}$ is a
nilpotent ideal of $\mathcal{N}_{0}$. Let $\xi \in \mathcal{N}_{1}^{\ast }$
be such that $\xi |_{\mathcal{N}_{2}}=0$ then $\xi $ defines a Lie algebra
character of $\mathcal{N}_{1}$, and we have%
\begin{equation*}
\mathcal{N}_{0}^{\xi }=\mathfrak{a}_{0}^{\xi }\oplus \mathcal{N}_{1}
\end{equation*}%
where $\mathcal{N}_{0}^{\xi }$ and $\mathfrak{a}_{0}^{\xi }$ denote the
stabilizers of $\xi $ in the respective Lie algebras.

Consider the linear map $\DimaA{\Xi} :\mathcal{N}_{0}^{\xi }\rightarrow \mathcal{U}%
\left( \mathfrak{a}_{0}^{\xi }\right) $ given by
\begin{equation}
\DimaA{\Xi} \left( Z\right) =\left\{
\begin{tabular}{lll}
$Z$ & if & $Z\in \mathfrak{a}_{0}^{\xi }$ \\
$\xi \left( Z\right) $ & if & $Z\in \mathcal{N}_{1}$%
\end{tabular}%
\right.   \label{=Psi}
\end{equation}%
It is easy to check that $\DimaA{\Xi} $ is a Lie algebra map, i.e. it intertwines
the Lie bracket with the commutator, and hence by universality it extends to
an algebra map from $\mathcal{U}\left( \mathcal{N}_{0}^{\xi }\right) $ to $%
\mathcal{U}\left( \mathfrak{a}_{0}^{\xi }\right) $ that we continue to
denote by $\DimaA{\Xi} $.

Suppose $M$ is an $\mathcal{N}_{0}^{\xi }$-module, we define the $\xi $-coinvariant
module to be
\begin{equation*}
L=M/M^{\prime }\text{ where }M^{\prime }=span\left\{ Zv-\xi \left( Z\right)
v\mid Z\in \mathcal{N}_{1},v\in M\right\}
\end{equation*}%
Then $L$ is a $\mathfrak{a}_{0}^{\xi }$-module and the projection map $%
\varpi :M\rightarrow L$ satisfies
\begin{equation}
\varpi \left( uv\right) =\DimaA{\Xi} \left( u\right) \varpi \left( v\right) \text{
for }u\in \mathcal{U}\left( \mathcal{N}_{0}^{\xi }\right) ,v\in M\text{.}
\label{=inter}
\end{equation}

\subsection{The Key Lemma}\label{subsec:keylem}

In this subsection we assume the following.

\begin{cond}
\label{assume}
$ $
\begin{enumerate}
\item $\mathfrak{a}$ is a Lie algebra with elements $X,Y$ and bigrading $%
\mathfrak{a}_{ij}$ as in \S \ref{bigraded}.

\item $\mathfrak{a}_{ij}=\left\{ 0\right\} $ if $j\not\in \left\{
-1,0,1\right\} $ and also that $\mathfrak{a}_{1,-1}=\left\{ 0\right\} $.

\item $\xi $ is a character of $\mathcal{N}_{1}$ as in \S \ref{coinv}
and $\xi |_{\mathfrak{a}_{ij}}=0$ unless $i=1,j=0$.

%\item $M$ is a $\mathfrak{a}$-module with an $\mathcal{N}_{0}^{\xi }$-good $%
%\mathfrak{a}$-filtration $\left\{ M^{i}\right\} .$
\end{enumerate}
\end{cond}

\begin{lemma}
Suppose $\mathfrak{a}$ and $\xi $ satisfy (1)-(3) of Condition \ref{assume}
then we have $\mathfrak{a}_{0,1}\subset \mathfrak{a}_{0}^{\xi }.$
\end{lemma}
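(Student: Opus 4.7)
The claim is essentially a bookkeeping exercise about the bigrading, so the plan is to unwind the definitions and check compatibility of bidegrees. The stabilizer $\mathfrak{a}_{0}^{\xi}$ is defined by the coadjoint condition $\xi([Z,W])=0$ for all $W\in\mathcal{N}_1$, interpreted using that $\mathcal{N}_1$ is an ideal of $\mathcal{N}_0$, so that $[Z,\mathcal{N}_1]\subset\mathcal{N}_1$ for any $Z\in\mathcal{N}_0$. Since $\mathfrak{a}_{0,1}\subset\mathfrak{a}_0\subset\mathcal{N}_0$, this makes sense for $Z\in\mathfrak{a}_{0,1}$.

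The plan is as follows. Fix $Z\in\mathfrak{a}_{0,1}$; I need to verify $\xi([Z,W])=0$ for every $W\in\mathcal{N}_1$. By bilinearity and condition (2), it suffices to take $W\in\mathfrak{a}_{ij}$ with $i\geq 1$ and $j\in\{-1,0,1\}$. Since $[X,[Z,W]]=i[Z,W]$ and $[Y,[Z,W]]=(j+1)[Z,W]$, the bracket $[Z,W]$ lies in $\mathfrak{a}_{i,j+1}$. By condition (3), $\xi$ vanishes on $\mathfrak{a}_{i,j+1}$ unless $(i,j+1)=(1,0)$, i.e.\ unless $i=1$ and $j=-1$. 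But then $W\in\mathfrak{a}_{1,-1}$, which is $\{0\}$ by the second part of condition (2). Hence $\xi([Z,W])=0$ in every case.

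The only potential obstacle is the sanity check that the coadjoint action is well-defined, i.e.\ that $[Z,\mathcal{N}_1]\subset\mathcal{N}_1$; this is immediate from the fact that $\mathcal{N}_1$ is an ideal of $\mathcal{N}_0$ (noted explicitly in \S\ref{coinv}) together with $\mathfrak{a}_{0,1}\subset\mathfrak{a}_0\subset\mathcal{N}_0$. Aside from this, the argument is purely a degree count, using the hypothesis $\mathfrak{a}_{1,-1}=0$ precisely once — exactly to block the only bidegree that could produce a nonzero value of $\xi$ on $[Z,W]$.
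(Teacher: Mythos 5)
Your argument is correct and is essentially identical to the paper's: restrict to a bigraded component $W\in\mathfrak{a}_{ij}$ of $\mathcal{N}_1$, note $[Z,W]\in\mathfrak{a}_{i,j+1}$, use Condition (3) to see $\xi$ can only be nonzero if $(i,j)=(1,-1)$, and then invoke $\mathfrak{a}_{1,-1}=0$ from Condition (2). The extra remark about the coadjoint action being well-defined is a small sanity check the paper leaves implicit, not a different route.
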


\begin{proof}
We need to show that $\DimaA{\xi} \left( \left[ A,B\right] \right) =0$ for all $%
A\in \mathfrak{a}_{0,1}$, $B\in \mathcal{N}_{1}$.

To prove this we may assume that $B\in \mathfrak{a}_{ij}$ for some $i,j$.
Then we have $\left[ A,B\right] \in \mathfrak{a}_{i,j+1}$ and so by
Condition \ref{assume} (3) we have $\DimaA{\xi} \left( \left[ A,B\right] \right) =0$
unless $i=1$ and $j=-1$. This forces $B\in $ $\mathfrak{a}_{1,-1}$ and hence
$B=0$ by Condition \ref{assume} (2) and so $\DimaA{\xi} \left( \left[ A,B\right]
\right) =0$ in this case as well.
\end{proof}

In view of the previous lemma we have a well-defined restriction map%
\begin{equation*}
Res:\left( \mathfrak{a}_{0}^{\DimaA{\xi} }\right) ^{\ast }\rightarrow \left(
\mathfrak{a}_{0,1}\right) ^{\ast }
\end{equation*}%

Let $M$ be a $\mathfrak{a}$-module and fix a (not necessarily good) filtration $F^iM$ on $M$.
We now define the $\DimaA{\xi} $-coinvariants module $L$ of $M$ as in \S \ref{coinv} and let $F^iL$ be the induced filtration on it.

Let
\begin{equation*}
AV_F\left( L\right) \subset \left( \mathfrak{a}_{0}^{\DimaA{\xi} }\right) ^{\ast }%
\text{ and }\mathcal{V}\left( M\right) \subset \mathfrak{a}^{\ast }
\end{equation*}%
denote the respective $F$-associated variety of $L$ and annihilator variety of $M$ as in
\S \ref{subsec:Filt} and \S \ref{subsubsec:AnnVar}.

We are now ready to formulate the key lemma.
\begin{lem}[\RamiA{The key lemma}]
\label{lem:key} Suppose $\phi \in Res\left[ AV_F\left( L\right) \right]
\subset \left( \mathfrak{a}_{0,1}\right) ^{\ast }$ and regard $\phi +\DimaA{\xi} $
as an element of $\mathfrak{a}^{\ast }$ via
\begin{equation*}
\left( \phi +\DimaA{\xi} \right) |_{\mathfrak{a}_{0,1}}=\phi \text{, }\left( \phi
+\DimaA{\xi} \right) |_{\mathfrak{a}_{1,0}}=\DimaA{\xi} \text{ and }\left( \phi +\DimaA{\xi}
\right) |_{\mathfrak{a}_{ij}}=0\text{ for all other pairs }\left( i,j\right)
\end{equation*}%
Then we have%
\begin{equation*}
\phi +\DimaA{\xi} \in \mathcal{V}\left( M\right)
\end{equation*}
\end{lem}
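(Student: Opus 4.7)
The plan is to show that $\sigma(u)(\phi+\xi)=0$ for every $u\in \operatorname{Ann}(M)$, which is the same as $\phi+\xi\in\mathcal V(M)$. Since $\operatorname{Ann}(M)$ is stable under $\operatorname{ad} X$ and $\operatorname{ad} Y$ and both act semisimply, I would decompose $u$ into bihomogeneous components $u_{i,j}\in \mathcal U_{i,j}(\mathfrak a)\cap \operatorname{Ann}(M)$ and treat each separately. Since $\phi+\xi$ is supported only on $\mathfrak a_{1,0}\oplus\mathfrak a_{0,1}$, a monomial in $\sigma(u_{i,j})$ pairs nontrivially with $\phi+\xi$ only when every one of its factors lies in $\mathfrak a_{1,0}\cup\mathfrak a_{0,1}$; this forces $i,j\geq 0$ and the filtration degree of $u_{i,j}$ to equal $d:=i+j$, so I reduce to this case and drop the subscripts.

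Next I would introduce the algebra homomorphism $\theta:\Sym(\mathfrak a)\to \Sym(\mathfrak a_0^\xi)$ given by $\theta(X)=\xi(X)\in\mathbb C$ for $X\in\mathfrak a_{1,0}$, $\theta(Y)=Y$ for $Y\in\mathfrak a_{0,1}\subset\mathfrak a_0^\xi$, and $\theta\equiv 0$ on every other bigraded piece. A direct computation gives $\sigma(u)(\phi+\xi)=\theta(\sigma(u))(\phi)$, and since $\theta(\sigma(u))\in\Sym(\mathfrak a_{0,1})$ its value depends only on the restriction to $\mathfrak a_{0,1}$, so $\theta(\sigma(u))(\widetilde\phi)=\theta(\sigma(u))(\phi)$ for any lift $\widetilde\phi\in(\mathfrak a_0^\xi)^*$ of $\phi$. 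The lemma therefore reduces to the claim that $\theta(\sigma(u))$ annihilates $\operatorname{gr}_F(L)$ as a $\Sym(\mathfrak a_0^\xi)$-module; granting this, picking $\widetilde\phi\in AV_F(L)$ with $\operatorname{Res}(\widetilde\phi)=\phi$ yields $\theta(\sigma(u))(\widetilde\phi)=0$ and hence $\sigma(u)(\phi+\xi)=0$.

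To prove the annihilation claim I would fix $v\in F^kM$ and analyse $\varpi(uv)=0$ in $L$ modulo $F^{k+j-1}L$. Expanding $u$ in the PBW basis from \S\ref{bigraded}, each monomial factors uniquely as $T=T^+T^0T^-$ with $T^+\in\mathcal U(\mathcal N_1)$, $T^0\in\mathcal U(\mathfrak a_0)$ and $T^-\in \mathcal U(\bigoplus_{s<0}\mathfrak a_s)$. If $T^+$ contains a factor in $\mathcal N_1\setminus\mathfrak a_{1,0}$, then $\xi$ vanishes there, so $\Xi(T^+)=0$, and applying \eqref{=inter} to $T^+$ acting on $T^0 T^- v$ gives $\varpi(Tv)=0$. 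If $T^+$ lies entirely in $\mathfrak a_{1,0}$ I would distinguish two sub-cases: when $T$ is good (all factors in $\mathfrak a_{1,0}\cup\mathfrak a_{0,1}$, which is forced when $T^-=0$) we have $T\in\mathcal U(\mathcal N_0^\xi)$ and $\varpi(Tv)=\Xi(T)\varpi(v)\in F^{k+j}L$, whose class in $\operatorname{gr}^{k+j}_FL$ is exactly $\theta(\sigma(T))\cdot[\varpi(v)]_k$; when instead $T^-\neq 0$, a bidegree count using $\mathfrak a_{1,-1}=0$ and $j\in\{-1,0,1\}$ forces $|T^+|\geq i+|T^-|$, hence $|T^0|+|T^-|\leq j-|T^-|\leq j-1$, and thus $\varpi(Tv)\in F^{k+j-1}L$. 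Any remaining configurations---PBW monomials of filtration degree $<d$, or degree-$d$ monomials with $T^0$ containing a factor outside $\mathfrak a_{0,1}$---are impossible by the same bidegree count. Summing, modulo $F^{k+j-1}L$ only the good monomials contribute, giving $\theta(\sigma(u))\cdot[\varpi(v)]_k=0$ in $\operatorname{gr}^{k+j}_FL$ for every $v$.

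The main obstacle is the bookkeeping required in the last step: one has to exclude, for each ``mixed'' PBW monomial, any contribution at filtration level $k+j$ in $L$. The three conditions in Condition~\ref{assume}---$j\in\{-1,0,1\}$, $\mathfrak a_{1,-1}=0$, and $\xi$ supported on $\mathfrak a_{1,0}$---combine for exactly this purpose: the first two bound how much first-bidegree can be transferred across a $(T^+,T^-)$ pair, while the third makes $\Xi$ annihilate every $\mathcal N_1$-factor outside $\mathfrak a_{1,0}$. This conspiracy is what lets the good part $\theta(\sigma(u))$ inherit the annihilation property from $u\in\operatorname{Ann}(M)$.
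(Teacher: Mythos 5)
Your proof is correct and follows essentially the same route as the paper's: reduce to a bihomogeneous element via $\operatorname{ad}X,\operatorname{ad}Y$-stability of the annihilator, separate PBW monomials into "good" (all factors in $\mathfrak a_{1,0}\cup\mathfrak a_{0,1}$, which the paper calls \emph{relevant}) and the rest, transfer the annihilation property to $\operatorname{Gr}_F(L)$ via the character map on $\mathcal N_1$-factors (your $\theta$ is the symbol-level avatar of the paper's $\Xi$, and your claim that $\theta(\sigma(u))$ annihilates $\operatorname{Gr}_F(L)$ is exactly the paper's claims (C2)--(C4)), and carry out the same degree/bidegree bookkeeping to show mixed monomials only contribute in filtration degree $\leq k+j-1$. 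Your $T=T^+T^0T^-$ factorization by $X$-weight is a slight coarsening of the paper's four-group partition but the case analysis (factor killed by $\xi$; negative-$X$-weight factor forces excess $\mathfrak a_{1,0}$-factors; and no other mixed configuration at top degree) is the same.
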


The proof involves in a crucial way the PBW basis discussed in \S \ref{bigraded} above. Note that by Condition \ref{assume} (2), the sequence of
pairs $\left( i,j\right) $ ordered as in (\ref{=lex0}) looks as follows:
\begin{equation}
\fbox{$\cdots ,\left( 1,1\right) $},\fbox{$\left( 1,0\right) ,(0,1)$},\fbox{$(0,0),(0,-1)$},\fbox{$\left( -1,1\right) ,\cdots $}  \label{=lex}
\end{equation}
where we have grouped the possible pairs $\left( i,j\right) $ into 4 groups
for ease of reference below. Note that we do not mean to imply that $%
\mathcal{B}_{ij}\neq \emptyset $ for the indicated pairs in (\ref{=lex}),
but rather that $\mathcal{B}_{ij}=\emptyset $ for the \emph{missing pairs}
e.g. $\left( 1,-1\right) ,\left( 0,2\right) $ etc.

\begin{proof}
[Proof of Lemma \ref{lem:key}]
\Dima{Let $\sigma^n:\mathcal{U}^n(\mathfrak{a}) \to \Sym^n(\mathfrak{a})$ denote the $n$-th symbol map. }
We need to show that for all $n$, and for all $
P\in Ann\left( M\right) \cap \mathcal{U}^{n}\left( \mathfrak{a}\right) $ we
have%
\begin{equation}
\left\langle \sigma ^{n}\left( P\right) ,\phi +\DimaA{\xi} \right\rangle =0
\label{=show}
\end{equation}
Since $Ann\left( M\right) $ and $\mathcal{U}^{n}\left( \mathfrak{a}\right) $
are stable under the adjoint action $ad$, $Ann\left( M\right) \cap \mathcal{U}^{n}\left( \mathfrak{a}\right) $ is a direct sum of $ad(X)$-eigenspaces.
Since $X$ any $Y$ commute, each $ad(X)$-eigenspace in $Ann\left( M\right) \cap \mathcal{U}^{n}\left( \mathfrak{a}\right) $ is a direct sum of $ad(Y)$-eigenspaces.
Thus we may further assume
\begin{equation*}
P\in Ann\left( M\right) \cap \mathcal{U}^{n}\left( \mathfrak{a}\right) \cap
\mathcal{U}_{kl}\left( \mathfrak{a}\right)
\end{equation*}%
for some integers $k,l$, where $\mathcal{U}_{kl}\left( \mathfrak{a}\right) $
is defined as in (\ref{=Uij}).

Consider the PBW monomials contained in $P$ in the sense of Definition \ref{contain}. We say such a monomial is \textquotedblleft
relevant\textquotedblright\ if it is a product of precisely $n$ factors from
group 2 in the sequence (\ref{=lex}) \Dima{(i.e. $\{(1,0),(0,1)\}$)} and \textquotedblleft
irrelevant\textquotedblright\ otherwise. Thus we get a decomposition
\begin{equation*}
P=R+I
\end{equation*}%
where $R$ and $I$ are combinations of relevant and irrelevant monomials
respectively.

We note that $R\in \mathcal{U}\left( \mathcal{N}_{0}^{\DimaA{\xi} }\right) $ and we
claim that the following properties hold
\begin{eqnarray}
\left\langle \sigma ^{n}\left( P\right) ,\phi +\DimaA{\xi} \right\rangle
&=&\left\langle \sigma ^{n}\left( R\right) ,\phi +\DimaA{\xi} \right\rangle
\label{C1} \\
\DimaA{\Xi} \left( R\right)  &\in &\mathcal{U}^{n-k}\left( \mathfrak{a}%
_{0,1}\right)   \label{C2} \\
\sigma ^{n-k}\left( \DimaA{\Xi} \left( R\right) \right)  &\in &Ann\left( Gr_F(L)\right)   \label{C3} \\
\left\langle \sigma ^{n}\left( R\right) ,\phi +\DimaA{\xi} \right\rangle
&=&\left\langle \sigma ^{n-k}\left( \DimaA{\Xi} \left( R\right) \right) ,\phi
\right\rangle   \label{C4}
\end{eqnarray}%
Granted these claims for the moment, we can prove the Lemma as
follows. Since $\phi \in Res\left[ AV_F\left( L\right) \right] $ we deduce
from (\ref{C2}) and (\ref{C3}) that $\left\langle \sigma ^{n-k}\left( \DimaA{\Xi}
\left( R\right) \right) ,\phi \right\rangle =0$. Now by (\ref{C1}) and (\ref%
{C4}) we get (\ref{=show}) as desired.

We now turn to the proof of claims (\ref{C1} -- \ref{C4}). For (\ref{C1})
its suffices to show that
\begin{equation}
\left\langle \sigma ^{n}\left( T\right) ,\phi +\DimaA{\xi} \right\rangle =0
\label{=showT}
\end{equation}%
for every \emph{irrelevant} monomial $T$ contained in $P$. Indeed if $T$ has
fewer then $n$ factors then $\sigma ^{n}\left( T\right) =0,$ otherwise $T$
must have a factor not from group 2 and then (\ref{=showT}) holds since
$\phi +\DimaA{\xi} $ vanishes on such factors by definition.

If $R=0$ then certainly (\ref{C2} -- \ref{C4}) hold. Therefore we may assume
that $P$ contains at least one relevant monomial $S$.  By definition every
such $S$ is of the form
\begin{equation*}
S=A_{1}\cdots A_{p}B_{1}\cdots B_{n-p}\text{ with }A_{i}\in \mathcal{B}_{1,0}%
\text{ and }B_{j}\in \mathcal{B}_{0,1}
\end{equation*}%
By Lemma \ref{lem-contain} we have $S\in \mathcal{U}_{kl}\left( \mathfrak{a}%
\right) $ which forces%
\begin{equation}
k,l\geq 0\text{ and }n=k+l.  \label{=kl}
\end{equation}%
and that $S$ is necessarily of the form%
\begin{equation}
S=A_{1}\cdots A_{k}B_{1}\cdots B_{n-k}\text{ with }A_{i}\in \mathcal{B}_{1,0}%
\text{ and }B_{j}\in \mathcal{B}_{0,1}  \label{=relev}
\end{equation}%
Now by (\ref{=Psi}) we get%
\begin{equation}
\DimaA{\Xi} \left( S\right) =\DimaA{\Xi} \left( A_{1}\cdots A_{k}B_{1}\cdots
B_{n-k}\right) =\DimaA{\xi} \left( A_{1}\right) \cdots \DimaA{\xi} \left( A_{k}\right)
B_{1}\cdots B_{n-k}\in \mathcal{U}^{n-k}\left( \mathfrak{a}_{0,1}\right)
\label{=cont}
\end{equation}%
Since $R$ is a combination of relevant monomials (\ref{C2}) follows.

To prove (\ref{C3}) we need to show that%
\begin{equation*}
\DimaA{\Xi} \left( R\right) L^{i}\subset L^{i+n-k-1}
\end{equation*}%
By formula (\ref{=inter}) we have
\begin{equation*}
\DimaA{\Xi} \left( R\right) L^{i}=\DimaA{\Xi} \left( R\right) \varpi \left( M^{i}\right)
=\varpi \left( RM^{i}\right) =\varpi \left( \left( P-I\right) M^{i}\right) .
\end{equation*}%
Since $P\in Ann\left( M\right) $ we have $PM^{i}=0$ and so it suffices to
show that
\begin{equation}
\varpi \left( TM^{i}\right) \subset L^{i+n-k-1}  \label{=showC3}
\end{equation}%
for every \emph{irrelevant} monomial $T$ contained in $P$. For this we
consider several cases.

First suppose $T$ has a group 1 factor, then we can write $T=ZT^{\prime }$
where $Z$ is a group 1 basis vector and $T^{\prime }$ is a smaller PBW
monomial. In this case we have $\DimaA{\xi} \left( Z\right) =0$ and hence we get
\begin{equation*}
\varpi \left( TM^{i}\right) =\varpi \left( ZT^{\prime }M^{i}\right) =\DimaA{\xi}
\left( Z\right) \varpi \left( T^{\prime }M^{i}\right) =0
\end{equation*}%
which certainly implies (\ref{=showC3}).

Thus we may suppose $T$ has no group 1 factors. It follows then that the
only possible factors of $T$ with positive $ad$ $X$ weight are those from $%
\mathcal{B}_{1,0}$. Now suppose that $T$ has a group 4 factor. Since such a
factor has negative $ad$ $X$ weight and so since $T$ has $ad$ $X$ weight $k$%
, $T$ must have\ at least $k+1$ factors from $\mathcal{B}_{1,0}$. Thus $%
T=A_{1}\cdots A_{k+1}T^{\prime }$ where $A_{i}\in \mathcal{B}_{1,0}$ and $%
T^{\prime }\in \mathcal{U}^{n-k-1}\left( \mathfrak{a}\right) $. Thus we get
\begin{equation*}
\varpi \left( TM^{i}\right) =\DimaA{\xi} \left( A_{1}\right) \cdots \DimaA{\xi} \left(
A_{k+1}\right) \varpi \left( T^{\prime }M^{i}\right) \subset L^{i+n-k-1}
\end{equation*}

Therefore we may assume that $T$ has only group 2 and group 3 factors. Since
$T\in \mathcal{U}_{kl}\left( \mathfrak{a}\right) $ it follows that $T$ must
have exactly $k$ factors from $\mathcal{B}_{1,0}$ and \emph{at least }$l$
factors from $\mathcal{B}_{0,1}$. Since $T$ has at most $n$ factors and $%
k+l=n,$ it follows that $T$ has \emph{exactly} $l$ factors from $\mathcal{B}%
_{0,1}$. Hence $T$ is relevant, contrary to assumption. This finishes the
proof of (\ref{=showC3}).

Finally to prove (\ref{C4}) it suffices to show that%
\begin{equation*}
\left\langle \sigma ^{n}\left( S\right) ,\phi +\DimaA{\xi} \right\rangle
=\left\langle \sigma ^{n-k}\left( \DimaA{\Xi} \left( S\right) \right) ,\phi
\right\rangle
\end{equation*}%
for every relevant monomial $S=A_{1}\cdots A_{k}B_{1}\cdots B_{n-k}$  as in (%
\ref{=relev}). For this we calculate as follows%
\begin{equation*}
\left\langle \sigma ^{n}\left( S\right) ,\phi +\DimaA{\xi} \right\rangle
=\left\langle \DimaA{\xi} \left( A_{1}\right) \cdots \DimaA{\xi} \left( A_{k}\right)
\sigma ^{n-k}\left( B_{1}\cdots B_{n-k}\right) ,\phi \right\rangle
=\left\langle \sigma ^{n-k}\left( \DimaA{\Xi} \left( S\right) \right) ,\phi
\right\rangle
\end{equation*}
\end{proof}

\subsection{Proof of Theorem \ref{thm:HCAdm}}\label{subsec:PfHCAdm}

In the notations of Theorem \ref{thm:HCAdm}:

It is enough to show that $\oE^d(\pi)$ is admissible.
Let $(\tau,L):= \oE^d(\pi)= \pi_{\fu^{d-1},\psi},$ considered as a representation of $\fp_{n-d+1} = \g_{n-d} \ltimes \fv_{n-d+1}$.
Denote by $\varpi$ the projection $\pi \onto \tau$.

We will need the following lemma from linear algebra.
\begin{lem}\label{lem:LinAlg}
Let $u \in \fu_n^d$ be the matrix that has 1s on the superdiagonal of the lower block and 0s elsewhere. Then for any $v \in \fv_{n-d+1}$, if $(u+v)^d=0$ then $v=0$.
\end{lem}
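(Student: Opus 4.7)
My plan is to unwind the matrix description of $u$ and $v$ and then simply apply $(u+v)^d$ to a cleverly chosen vector.

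First I would make the ingredients completely explicit. The algebra $\fu_n^d = \fv_{n-d+1}\oplus\cdots\oplus\fv_n$ consists of matrices supported in columns $n-d+1,\ldots,n$, strictly above the diagonal. The ``superdiagonal of the lower block'' refers to the positions $(n-d+i,n-d+i+1)$ for $i=1,\ldots,d-1$, so
\[
u=\sum_{i=n-d+1}^{n-1} E_{i,i+1},
\]
a single Jordan block of size $d$ placed in the bottom-right $d\times d$ corner (and zero elsewhere). Similarly, $\fv_{n-d+1}$ is supported in column $n-d+1$, rows $1,\ldots,n-d$, so
\[
v=\sum_{i=1}^{n-d} v_i\, E_{i,n-d+1}
\]
for some scalars $v_i$.

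Next I would compute the action of $u+v$ on the last standard basis vector $e_n$. Since $u$ kills $e_{n-d+1}$ and maps $e_{j+1}\mapsto e_j$ for $j=n-d+1,\ldots,n-1$, while $v$ annihilates every $e_j$ with $j\ne n-d+1$, applying $u+v$ to $e_n$ successively gives
\[
(u+v)^k e_n=e_{n-k}\qquad \text{for }k=0,1,\ldots,d-1,
\]
because along this chain $v$ contributes nothing (it only sees $e_{n-d+1}$, which first appears at step $d-1$). One more application yields
\[
(u+v)^d e_n=(u+v)e_{n-d+1}=u\cdot e_{n-d+1}+v\cdot e_{n-d+1}=\sum_{i=1}^{n-d} v_i\, e_i.
\]

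Finally, the hypothesis $(u+v)^d=0$ forces the right-hand side to be zero. Since $e_1,\ldots,e_{n-d}$ are linearly independent, all $v_i=0$, hence $v=0$. The only nontrivial thing to check along the way is the stability of the chain $e_n,e_{n-1},\ldots,e_{n-d+1}$ under the perturbation $v$, and this is immediate from the support condition on $v$; there is no real obstacle, as the lemma reduces to a one-line calculation on a single vector.
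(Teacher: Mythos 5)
Your proof is correct and rests on the same computation as the paper's. The paper describes the full matrix $(u+v)^k$ for all $k\le d$ (tracking both the shifting ``$v$-column'' and the Jordan block $J_d^k$ in the lower-right corner), while you extract exactly the one relevant column by applying $(u+v)^d$ to the test vector $e_n$; this is a cleaner, logically equivalent version of the same argument.
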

\begin{proof}
Let $A:=u+v$. Computing $A^k$ by induction for $k \leq d$ we see that its first $n-d$ columns will be zero, the $n-d+k$-th column of the submatrix consisting of the first $n-d$ rows will be $v$, the other columns of this submatrix will be zero and the square submatrix formed by the last $d$ rows and columns will be $J_d^k$, where $J_d$ is the (upper triangular) Jordan block. Thus, $A^d=0$ if and only if $v=0$.
\end{proof}

Fix a good filtration $\pi^i$ on $\pi$. Note that by Proposition \ref{prop:WonFilt} it will also be good as an $\n$-filtration and define $L^i:=\varpi(\pi^i)$.  Note that $L^i$ is a good filtration.

\begin{cor} \label{cor:AV0}
$pr_{\fv_{n-d+1}^*}(AV(L)) = \{0\}$.
\end{cor}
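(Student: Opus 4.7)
The plan is to apply the algebraic Key Lemma (Lemma \ref{lem:key}) with a carefully chosen bigrading on $\mathfrak{a} := \g_n$, and combine its output with Lemma \ref{lem:LinAlg} and the depth assumption on $\pi$.

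First I choose the commuting ad-semisimple elements
$$X := \diag(\underbrace{0,\ldots,0}_{n-d+1},\,-1,-2,\ldots,-(d-1)), \qquad Y := \diag(\underbrace{1,\ldots,1}_{n-d},\,\underbrace{0,\ldots,0}_{d})$$
of $\g_n$. Computing the bi-weights on the matrix units $E_{ij}$ one checks directly that $\mathcal{N}_1 = \bigoplus_{i\ge 1}\mathfrak{a}_i$ coincides with $\fu^{d-1}$, that all $Y$-weights on $\g_n$ lie in $\{-1,0,1\}$ and $\mathfrak{a}_{1,-1}=0$, and that $\mathfrak{a}_{0,1} = \fv_{n-d+1}$. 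Moreover the ``supporting'' matrix units $E_{i,i+1}$ for $n-d+1 \le i \le n-1$ --- those on which the character $\xi := \psi^{d-1}$ of $\fu^{d-1}$ is nonzero --- all have bi-weight $(1,0)$, while all other matrix units in $\fu^{d-1}$ lie in bi-weight components where $\xi$ vanishes. Hence Condition \ref{assume} is satisfied. A short bracket calculation, using $\xi([E_{i,j},E_{k,l}])=\delta_{jk}$ when $i=n-d+1$ and $l=n-d+2$, then identifies the stabilizer $\mathfrak{a}_0^{\xi}$ with $\fp_{n-d+1}$: the equations kill the last row of the $\g_{n-d+1}$-block, and the diagonal matrix units $E_{ii}$ for $i>n-d+1$ in $\mathfrak{a}_0$ are eliminated by bracketing against $E_{i-1,i}$ or $E_{i,i+1}$. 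With the filtration $L^i = \varpi(\pi^i)$, the $\xi$-coinvariants of $\pi$ as constructed in the Key Lemma are exactly the pair $(L, L^i)$.

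Next I apply Lemma \ref{lem:key}. Any $\phi \in pr_{\fv_{n-d+1}^{*}}(AV(L))$ lies in $\mathrm{Res}[AV_F(L)] \subset \mathfrak{a}_{0,1}^{*}$, so the lemma produces $\phi + \xi \in \cV(\pi) \subset \g_n^{*}$. Under the trace identification $\g_n^{*} \cong \g_n$, the element $\xi$ corresponds to the matrix with $1$s on the subdiagonal of the bottom-right $d\times d$ block, while $\phi$ corresponds to a matrix whose $(n-d+1)$-th row contains the values $\phi(E_{i,n-d+1})$ in columns $i \le n-d$; taking the transpose turns $\phi + \xi$ into an element of the form $u + v'$ figuring in Lemma \ref{lem:LinAlg}, where $v' \in \fv_{n-d+1}$ is linearly determined by $\phi$ and vanishes iff $\phi$ does.

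Finally, since $\pi$ has depth at most $d$ and $\phi + \xi \in \cV(\pi)$, the matrix $\phi + \xi$ is nilpotent of order $d$; hence so is $u + v'$. Lemma \ref{lem:LinAlg} then forces $v' = 0$, whence $\phi = 0$, as required. The delicate step of the argument is the first paragraph: one single pair $(X,Y)$ must simultaneously produce the weight bounds of Condition \ref{assume}, the identification $\mathfrak{a}_{0,1} = \fv_{n-d+1}$, the identification $\mathfrak{a}_0^{\xi} = \fp_{n-d+1}$, and the containment of $\mathrm{supp}(\xi)$ in $\mathfrak{a}_{1,0}$. Once this bookkeeping is set up, the Key Lemma and Lemma \ref{lem:LinAlg} combine essentially mechanically to give the desired vanishing.
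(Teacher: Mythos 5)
Your proof is correct and follows the same route as the paper: set up the bigrading of $\g_n$ so that Condition \ref{assume} holds with $\cN_1=\fu^{d-1}$, $\mathfrak{a}_{0,1}=\fv_{n-d+1}$ and $\xi$ supported on $\mathfrak{a}_{1,0}$, apply the Key Lemma \ref{lem:key} to produce $\phi+\xi\in\cV(\pi)$, and combine with Lemma \ref{lem:LinAlg} and the depth bound to force $\phi=0$. Your verification of the bookkeeping (including the identification $\mathfrak{a}_0^\xi=\fp_{n-d+1}$, which is needed so that $AV_F(L)$ genuinely lives in $(\mathfrak{a}_0^\xi)^*$) is sound.

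Worth noting: your choice of bigrading elements differs from the one printed in the paper, which takes $X=\diag(0^{n-d},1,2,\dots,d)$ and $Y=\diag(0^{n-d-1},1^{d+1})$. As written those do not do what is needed --- a direct computation shows that with the paper's $X$ the space $\cN_1=\oplus_{i\ge 1}\mathfrak{a}_i$ comes out as a strip of strictly lower-triangular matrix units rather than $\fu^{d-1}$, the superdiagonal entries $E_{j,j+1}$ ($j\ge n-d+1$) carrying $\psi^{d-1}$ land in $X$-weight $-1$ rather than $+1$, and $\mathfrak{a}_{0,1}$ becomes a row rather than the column $\fv_{n-d+1}$ --- so that stated pair appears to be a typo. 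Your $X=\diag(0^{n-d+1},-1,\dots,-(d-1))$, $Y=\diag(1^{n-d},0^d)$ is forced (up to adding multiples of the identity) by the requirements $\cN_1=\fu^{d-1}$ and $\operatorname{supp}\xi\subset\mathfrak{a}_{1,0}$, and you have checked that it simultaneously delivers the remaining conditions, so you have in effect repaired the printed argument rather than merely reproduced it. The translation at the end between $\phi+\xi\in\g_n^*$ and the matrix $u+v'$ of Lemma \ref{lem:LinAlg} via the trace pairing and a transpose is also carried out correctly.
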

\begin{proof}
Let $\mathfrak{a}:=\g_n$ and let $X,Y \in \g$ be diagonal matrices given by $$X = \diag(0^{n-d},1,2 , \dots ,  d) \text{ and }Y= \diag(0^{n-d-1},1^{d+1}).$$ Consider the bigrading $\mathfrak{a}=\bigoplus_{ij} \mathfrak{a}_{ij}$ defined as in \S \ref{bigraded}. Note that $\cN_1 = \DimaA{\fu_d}$ and let $\psi=\psi^d$. Note that the conditions of Condition \ref{assume} are satisfied.

Let $\phi \in pr_{\fv_{n-d+1}^*}(AV(L))$. By the  key Lemma \ref{lem:key}, we have $\phi+\psi \in \cV(\pi)$. By the definition of $d$, this implies $((\phi+\psi)^*)^d=0$ and thus, by Lemma
\ref{lem:LinAlg}, $\phi=0$.
\end{proof}

%\begin{cor}\label{cor:LocNilp}
%$V_{n-d+1}$ acts locally finitely on $L$.
%\end{cor}
%\begin{proof}
%
%\end{proof}

\begin{proof}[Proof of Theorem \ref{thm:HCAdm}]
By Corollary \ref{cor:AV0}, $pr_{\fv_{n-d+1}^*}(AV(L)) = \{0\}$.
Hence any $X\in \fv_{n-d+1}$ vanishes on $AV(L) \subset \p_{n-d+1}^*$.
By Hilbert's Nullstellensatz this implies that there exists $k$ such that $X^k \in Ann(Gr(L))$. Since $\fv_{n-d+1}$ is finite dimensional, one can find one $k$ suitable for all $X\in \fv_{n-d+1}$. Since $L^i$ is an $\n_{n-d+1}$-good filtration on $L$, $Gr(L)$ is finitely generated over $\Sym(\n_{n-d+1})$. Since $\Sym^{>k}(\fv_{n-d+1})$ acts by zero, $Gr(L)$ is finitely generated even over $\Sym(\n_{n-d})$. Thus, $L^i$ is an $\n_{n-d}$- good filtration and hence $L$ is finitely generated over $\n_{n-d}$. Thus, by Theorem \ref{thm:EqAdm} it is an admissible Harish-Chandra module over $G_{n-d}$.
\end{proof}

\appendix

\section{Proof of Proposition \ref{prop:WonFilt}} \label{sec:PfWonFilt}
\setcounter{lemma}{0}
The current formulation of the proposition and the proof written down in this appendix are results of a conversation of one of the authors with J. Bernstein.
After writing it down we learnt that Proposition \ref{prop:WonFilt} was proven by O. Gabber and written down by A. Joseph in his lecture notes \cite{Jos81}.
% Since \cite{Jos81} were never officially published,
We decided to keep this appendix for the convenience of the reader.

We will need some lemmas on filtrations.

\begin{defn}
Two filtrations $F$ and $\Phi$ are called comparable if there exists $k >0$
such that for all $i \geq 0$ we have $\Phi^{i}M \subset F^{i+k}M \subset\Phi^{i+2k}M$.
\end{defn}

\begin{lem} \label{lem:ComparFilt}
Suppose $F$ and $\Phi$ are comparable. Then there exist filtrations $\Psi_j$, $0 \leq j \leq 2k$ such that $\Psi_0^iM = F^{i+k}M$,
$\Psi_{2k}^iM = \Phi^{i+2k}M$, and $\Psi^i_jM \subset \Psi^i_{j+1}M \subset \Psi^{i+1}_jM \subset \Psi^{i+1}_{j+1}M$.
\end{lem}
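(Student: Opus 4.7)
The plan is to give an explicit interpolating formula between the two filtrations. Set
$$\Psi_j^i M \;:=\; F^{i+k+j} M \,\cap\, \Phi^{i+2k} M \,+\, F^{i+k} M, \qquad 0\le j\le 2k.$$
This is motivated by the ``chain of four'' $F^{i+k}M \subset F^{i+k+1}M \subset \Phi^{i+2k+1}M \subset \Phi^{i+2k}M$ that comparability forces at each level, and the formula simply walks from the leftmost term to the rightmost one by gradually enlarging the $F$-index inside an intersection against a fixed $\Phi$-term.

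First I would check the two boundary values. At $j=0$, comparability gives $F^{i+k}M\subset \Phi^{i+2k}M$, so $F^{i+k}M\cap \Phi^{i+2k}M = F^{i+k}M$ and hence $\Psi_0^iM = F^{i+k}M$. At $j=2k$, applying comparability with index $i+2k$ in place of $i$ gives $\Phi^{i+2k}M\subset F^{i+3k}M = F^{i+k+2k}M$, so $F^{i+k+2k}M\cap \Phi^{i+2k}M = \Phi^{i+2k}M$, and since $F^{i+k}M\subset \Phi^{i+2k}M$ the sum collapses to $\Psi_{2k}^iM = \Phi^{i+2k}M$.

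Next I would verify the three interlocking inclusions $\Psi_j^iM\subset \Psi_{j+1}^iM \subset \Psi_j^{i+1}M\subset \Psi_{j+1}^{i+1}M$. Monotonicity in $j$ (the first and third inclusions) is immediate from $F^{i+k+j}M\subset F^{i+k+j+1}M$. For the middle inclusion, rewrite
$$\Psi_{j+1}^iM = F^{i+k+j+1}M\cap \Phi^{i+2k}M + F^{i+k}M, \quad \Psi_j^{i+1}M = F^{i+k+j+1}M\cap \Phi^{i+2k+1}M + F^{i+k+1}M,$$
and apply $\Phi^{i+2k}M\subset \Phi^{i+2k+1}M$ to the intersection term and $F^{i+k}M\subset F^{i+k+1}M$ to the sum term. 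Monotonicity in $i$ (needed for $\Psi_j$ to be a filtration at all) follows from the same two containments applied to each piece. Finally, compatibility with the algebra filtration, $F^iA\cdot \Psi_j^lM\subset \Psi_j^{l+i}M$, follows by distributing the product and using $F^iA\cdot F^sM\subset F^{i+s}M$ and $F^iA\cdot \Phi^sM\subset \Phi^{i+s}M$ on the intersection piece separately.

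There is no real obstacle here: once the formula is written down, every step is a one-line containment using only the definition of comparability and monotonicity of $F$ and $\Phi$. The only nontrivial input is guessing the interpolating formula; the naive candidates $F^{i+k}M + \Phi^{i+2k-j}M$ or $F^{i+k}M\cap \Phi^{i+2k-j}M$ give the wrong boundary values, and the working formula combines an intersection in one index with a sum against a fixed baseline $F^{i+k}M$.
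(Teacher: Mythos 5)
Your proof is correct, but the interpolating formula you found is more complicated than it needs to be. The paper simply takes
\[
\Psi_j^i M := F^{i+k}M + \Phi^{i+j}M,
\]
which is the candidate you dismissed in reverse orientation: your ``naive candidate'' $F^{i+k}M + \Phi^{i+2k-j}M$ does give the correct boundary values, just with $j$ and $2k-j$ swapped — i.e.\ it interpolates from $\Phi^{i+2k}M$ down to $F^{i+k}M$ instead of the other way around. Reindexing by $j\mapsto 2k-j$ gives the paper's formula, and all four required inclusions then follow immediately from $\Phi^iM\subset F^{i+k}M\subset\Phi^{i+2k}M$ and monotonicity, with no intersections to track. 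Your formula $\Psi_j^iM = F^{i+k+j}M\cap\Phi^{i+2k}M + F^{i+k}M$ does work — the boundary checks, the chain of inclusions, and the compatibility $F^sA\cdot\Psi_j^lM\subset\Psi_j^{l+s}M$ (using $F^sA\cdot(U\cap V)\subset(F^sA\cdot U)\cap(F^sA\cdot V)$ together with the module-filtration axioms for both $F$ and $\Phi$) all go through as you say. The trade-off is purely aesthetic: the sum-only formula verifies the filtration compatibility axiom in one line per summand, whereas yours requires the small extra observation about intersections. It is worth noting, when hunting for an interpolant, that the direction of the walk can be reversed for free, so a candidate that ``fails'' by producing the boundary values in the wrong order is actually a success.
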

\begin{proof}
Define $\Psi^i_jM:= F^{i+k}M + \Phi^{i+j}M$.
\end{proof}

\begin{lem}\label{lem:GoodCompar}
Any two good filtrations are comparable.
%Let $\Phi$ and $\Psi$ be 2 good filtrations on $M$. Then there exist $j,k$
%such that $\forall i \quad\Phi^{i}M \subset\Psi^{i+j}M \subset\Phi^{i+j+k}M$.
\end{lem}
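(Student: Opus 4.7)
The plan is to use the defining property of a good filtration (condition (ii) in the definition of ``good''), which implies that the filtration is, past some index, generated by multiplying a finite set of elements by the filtration on $A$. One then transfers generators of one filtration into the other by expressing each generator as lying in some term of the other filtration, and conversely. This gives asymptotic containments in both directions, which are then boosted to uniform ones valid from index $0$.

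In detail, suppose $F$ and $\Phi$ are good filtrations on $M$. By condition (ii) for $F$, there is an integer $N$ such that $F^{N+j}M = F^{j}A \cdot F^{N}M$ for all $j \geq 0$. Choose $F^{0}A$-generators $m_{1},\dots,m_{r}$ of the finitely generated $F^{0}A$-module $F^{N}M$ (condition (i)); then $F^{N+j}M = \sum_{s} F^{j}A \cdot m_{s}$. Since $M = \bigcup_{i} \Phi^{i}M$, each $m_{s}$ lies in some $\Phi^{c_{s}}M$. Setting $c := \max_{s} c_{s}$ and using $F^{j}A \cdot \Phi^{c_{s}}M \subset \Phi^{j+c_{s}}M$, we obtain
\[
F^{N+j}M \;\subset\; \sum_{s} \Phi^{j+c_{s}}M \;\subset\; \Phi^{j+c}M \qquad \text{for all } j \geq 0.
\]
Hence $F^{i}M \subset \Phi^{i+(c-N)}M$ for all $i \geq N$. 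Since $F^{i}M \subset F^{N}M \subset \Phi^{c}M$ for $i < N$, by enlarging the constant we may find $k_{1} > 0$ such that $F^{i}M \subset \Phi^{i+k_{1}}M$ for \emph{all} $i \geq 0$. The symmetric argument, applied to the good filtration $\Phi$, yields $k_{2} > 0$ with $\Phi^{i}M \subset F^{i+k_{2}}M$ for all $i \geq 0$.

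Finally, to obtain the two-sided inequality with a single $k$, set $k := \max(k_{1}, k_{2})$. Then $\Phi^{i}M \subset F^{i+k}M$ (from the second inclusion, using $k \geq k_{2}$) and $F^{i+k}M \subset \Phi^{i+k+k_{1}}M \subset \Phi^{i+2k}M$ (from the first inclusion with index $i+k$, using $k \geq k_{1}$), as required. There is no serious obstacle in this argument; the only mildly technical point is the passage from asymptotic to uniform containment, which is handled by enlarging the constant so that the finitely many small-index terms are absorbed.
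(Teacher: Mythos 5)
The paper states this lemma without proof, treating it as a standard fact from the theory of good filtrations, so there is nothing in the text to compare your argument against. Your proof is correct and is in fact the standard textbook argument: from condition (ii) one gets $F^{N+j}M = (F^{j}A)\,F^{N}M$ for all $j\geq 0$, the finitely many $F^{0}A$-generators of $F^{N}M$ (condition (i)) land in some $\Phi^{c}M$, the filtration on $A$ pushes this to $F^{N+j}M \subset \Phi^{j+c}M$, and the finitely many initial indices $i<N$ are absorbed by enlarging the shift. The symmetric containment and the final passage to a single constant $k$ (using $k+k_{1}\leq 2k$) are also handled correctly, so the argument meets the paper's definition of comparable, including the requirement $k>0$.

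One very small point worth making explicit, since the paper fixes a single good filtration on $A$ once and for all: your notation $F^{j}A$ and the implicit $\Phi^{j}A$ refer to this same fixed filtration on $A$. That is what justifies the key step $F^{j}A \cdot \Phi^{c_{s}}M \subset \Phi^{j+c_{s}}M$; if $F$ and $\Phi$ came equipped with genuinely different filtrations of $A$ this inclusion would not be automatic. With that understanding your proof is complete.
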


\begin{cor}\label{cor:AVGood}
Let $\Phi$ and $\Psi$ be 2 good filtrations on $M$. Then $\operatorname{Gr}%
_{\Phi}(M)$ and $\operatorname{Gr}_{\Psi}(M)$ are Jordan-Holder equivalent. In
particular,  $AV_{\Phi}(M) $ does not depend on the choice of
good filtration $\Phi$.
\end{cor}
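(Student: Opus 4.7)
My plan is to chain the two preceding lemmas. By Lemma \ref{lem:GoodCompar} the good filtrations $\Phi$ and $\Psi$ are comparable, and Lemma \ref{lem:ComparFilt} then interpolates between suitable shifts of them by a finite chain $\Psi_0,\Psi_1,\dots,\Psi_{2k}$ of filtrations satisfying the adjacency condition $\Psi_j^iM\subset\Psi_{j+1}^iM\subset\Psi_j^{i+1}M\subset\Psi_{j+1}^{i+1}M$. Since shifting the index of a filtration only reindexes the associated graded (and hence does not change its Jordan--Holder factors or its support), it is enough to show that two adjacent filtrations in this chain give rise to Jordan--Holder equivalent associated gradeds and then invoke transitivity.

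For the adjacency step I will write (locally) $P^i := \Psi_j^iM$ and $Q^i := \Psi_{j+1}^iM$, so $P^i\subset Q^i\subset P^{i+1}$, and consider the pair of short exact sequences of $\Gr(A)$-modules
\begin{equation*}
0\to Q^{i-1}/P^{i-1}\to P^i/P^{i-1}\to P^i/Q^{i-1}\to 0,\qquad 0\to P^i/Q^{i-1}\to Q^i/Q^{i-1}\to Q^i/P^i\to 0.
\end{equation*}
Summing over $i$ shows that the composition factors of $\Gr_{\Psi_j}(M)=\bigoplus_i P^i/P^{i-1}$ form the multiset $\{P^i/Q^{i-1}\}_i\cup\{Q^{i-1}/P^{i-1}\}_i$, while those of $\Gr_{\Psi_{j+1}}(M)=\bigoplus_i Q^i/Q^{i-1}$ form $\{P^i/Q^{i-1}\}_i\cup\{Q^i/P^i\}_i$; the reindexing $i\mapsto i+1$ identifies $Q^{i-1}/P^{i-1}$ with $Q^i/P^i$, so the two multisets of subquotients agree.

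For the ``in particular'' clause, $\Gr(A)=\Sym(\g)$ is commutative Noetherian and each $\Gr_\Phi(M)$ is finitely generated over it, so $AV_\Phi(M)=\Supp\Gr_\Phi(M)$ equals the union of the supports of the Jordan--Holder factors (with multiplicity irrelevant for the support). The first part then forces $AV_\Phi(M)=AV_\Psi(M)$.

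The main subtlety I anticipate is that $\Gr_\Phi(M)$ need not have finite length, so ``Jordan--Holder equivalent'' has to be read as: both sides admit filtrations whose successive subquotients match as multisets of $\Gr(A)$-modules. The paired short exact sequences above exhibit exactly such filtrations, so no genuine obstacle arises; the argument is otherwise purely formal bookkeeping.
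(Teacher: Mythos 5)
Your proof is correct and follows essentially the same route the paper intends: chain Lemma \ref{lem:GoodCompar} and Lemma \ref{lem:ComparFilt} to interpolate, and then observe that for adjacent filtrations the associated gradeds differ by a kernel/cokernel pair that match after a grade shift. Your adjacency step is in fact a rederivation of the (unnumbered) lemma the paper states immediately after this corollary, where the decomposition is packaged as $0\to K\to\operatorname{Gr}_F(M)\to\operatorname{Gr}_\Phi(M)\to C\to 0$ with $C\cong K[1]$; the only point worth polishing is that your two short exact sequences become sequences of $\operatorname{Gr}(A)$-modules only after summing over $i$, which you do note.
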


\begin{lem}
Let $F$ and $\Phi$ be two filtrations on $M$ such that $F^iM \subset \Phi^iM \subset F^{i+1}M \subset \Phi^{i+1}M$ for any $i\geq 0$.
Consider the natural morphism $\phi: \Gr_F(M) \to \Gr_{\Phi}(M)$ and let $K:=\Ker \phi$, and $C:=\Coker \phi$. Then $C$ is isomorphic to the graded $\Gr(A)$-module $K[1]$  obtained from $K$ by shifting the gradation by one.
\end{lem}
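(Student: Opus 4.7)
The plan is a direct degree-by-degree computation of kernel and cokernel, followed by a routine check of compatibility with the $\Gr(A)$-action.

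First I would unpack the map $\phi$ on the $n$-th graded piece. With the convention that degree $n$ of $\Gr_F(M)$ corresponds to $F^{n+1}M/F^nM$ (and similarly for $\Phi$), the map
\[
\phi^{(n)}: F^{n+1}M/F^nM \longrightarrow \Phi^{n+1}M/\Phi^nM
\]
is induced by the inclusion $F^{n+1}M \hookrightarrow \Phi^{n+1}M$. Its kernel is $(F^{n+1}M\cap \Phi^nM)/F^nM$, and since we are given $\Phi^nM\subset F^{n+1}M$, this simplifies to $\Phi^nM/F^nM$. Its image is $F^{n+1}M/\Phi^nM\subset \Phi^{n+1}M/\Phi^nM$, so the cokernel is $\Phi^{n+1}M/(F^{n+1}M+\Phi^nM)=\Phi^{n+1}M/F^{n+1}M$, using $\Phi^nM\subset F^{n+1}M$ again.

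Next I would read off the comparison. We get $K^{(n)}=\Phi^nM/F^nM$ and $C^{(n)}=\Phi^{n+1}M/F^{n+1}M$, so the identity map on $\Phi^{n+1}M/F^{n+1}M$ gives a canonical vector-space isomorphism $C^{(n)}\xrightarrow{\sim} K^{(n+1)}$. This is exactly the statement $C\cong K[1]$ at the level of graded vector spaces.

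Finally I would verify this identification is $\Gr(A)$-linear. Both actions come from lifting $\bar a\in F^jA/F^{j-1}A$ to $a\in F^jA$ and multiplying: on $K^{(n)}=\Phi^nM/F^nM$, the action sends $\bar x$ to the class of $ax\in F^jA\cdot\Phi^nM\subset \Phi^{n+j}M$ in $\Phi^{n+j}M/F^{n+j}M=K^{(n+j)}$; on $C^{(n)}=\Phi^{n+1}M/F^{n+1}M$, the action sends $\bar y$ to the class of $ay\in\Phi^{n+j+1}M$ in $\Phi^{n+j+1}M/F^{n+j+1}M=C^{(n+j)}$. Well-definedness in each case uses $F^{j-1}A\cdot\Phi^\bullet M\subset \Phi^{\bullet+j-1}M\subset F^{\bullet+j}M$, one of the standing inclusions. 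Under the identification $C^{(n)}=K^{(n+1)}$ (literally the same quotient $\Phi^{n+1}M/F^{n+1}M$), both actions are described by the same formula, so they agree.

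There is no genuine obstacle here: once the hypothesis $\Phi^nM\subset F^{n+1}M\subset\Phi^{n+1}M$ is used to collapse intersections and sums, the identification $C^{(n)}=K^{(n+1)}$ is tautological, and the $\Gr(A)$-equivariance is a one-line check. The only mild bookkeeping point to be careful about is the grading convention used for $\Gr_F(M)=\bigoplus F^{i+1}M/F^iM$ in the paper, which is what produces the shift by $1$ rather than by any other integer.
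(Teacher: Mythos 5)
Your proof is correct and takes essentially the same route as the paper: in both cases one computes degree by degree that $K^{(n)}=\Phi^nM/F^nM$ and $C^{(n)}=\Phi^{n+1}M/F^{n+1}M$, whence $C\cong K[1]$. The paper states only this identification and leaves the $\Gr(A)$-equivariance implicit; your added verification of it is a welcome (if routine) bit of extra care.
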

\begin{proof}
The $i$-th grade of $\phi$ is the natural morphism $F^iM /F^{i-1}M \to  \Phi^iM/\Phi^{i-1}M$. Therefore $K_i = \Phi^{i-1}M/F^{i-1}M$ and $C_i=\Phi^{i}M/F^{i}M$.
\end{proof}

\begin{cor} \label{cor:CompGoodFilt}
Suppose $\Gr(A)$ is Noetherian. Let $F$ and $\Phi$ be two comparable filtrations on $M$. Then $F$ is a good filtration if and only if $\Phi$ is a good filtration.
\end{cor}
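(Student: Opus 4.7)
The plan is to reduce the statement to a sequence of very small changes of filtration, for which finite generation of the associated graded can be tracked directly using the Noetherian hypothesis. The mechanism for this reduction is already provided by Lemma \ref{lem:ComparFilt}, which interpolates between (shifted versions of) the two comparable filtrations.

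First I would observe that the goodness of a filtration is invariant under shifting the index by a constant: if $(F')^iM := F^{i+k}M$, then $(F')^0M = F^kM$ is finitely generated over $F^0A$ (being a finite sum of the finitely generated modules $F^0M,\dots,F^kM$ when $F$ is good), and the stability condition $(F')^{i+1}M = F^1A\cdot(F')^iM$ for large $i$ is inherited from $F$. Applying Lemma \ref{lem:ComparFilt}, we obtain filtrations $\Psi_0,\Psi_1,\dots,\Psi_{2k}$ where $\Psi_0$ is $F$ shifted by $k$ and $\Psi_{2k}$ is $\Phi$ shifted by $2k$. By the observation above, $\Psi_0$ is good iff $F$ is good, and $\Psi_{2k}$ is good iff $\Phi$ is good, so it suffices to prove that $\Psi_j$ is good if and only if $\Psi_{j+1}$ is good for each $j$.

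Next I would apply the unlabeled lemma immediately preceding Corollary \ref{cor:CompGoodFilt} to the pair $(\Psi_j,\Psi_{j+1})$ — its hypothesis $\Psi_j^iM \subset \Psi_{j+1}^iM \subset \Psi_j^{i+1}M$ is exactly the condition built into Lemma \ref{lem:ComparFilt}. This gives the natural map $\phi\colon\Gr_{\Psi_j}(M)\to\Gr_{\Psi_{j+1}}(M)$ of $\Gr(A)$-modules with $K := \Ker\phi$ and $C := \Coker\phi$ satisfying $C\cong K[1]$ as graded $\Gr(A)$-modules. From $\phi$ we have two short exact sequences
\[
0 \to K \to \Gr_{\Psi_j}(M) \to \Im(\phi) \to 0, \qquad 0 \to \Im(\phi) \to \Gr_{\Psi_{j+1}}(M) \to C \to 0.
\]

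Finally I would invoke that a filtration is good iff its associated graded module is finitely generated over $\Gr(A)$ (from the unnamed lemma in \S\S\ref{subsec:Filt}), together with the Noetherian hypothesis: over a Noetherian ring, submodules and quotients of finitely generated modules are finitely generated, and extensions of finitely generated by finitely generated are finitely generated. If $\Psi_j$ is good, then $\Gr_{\Psi_j}(M)$ is finitely generated, hence $K$ (submodule) and $\Im(\phi)$ (quotient) are finitely generated; then $C\cong K[1]$ is finitely generated, and the second short exact sequence shows $\Gr_{\Psi_{j+1}}(M)$ is finitely generated. The reverse direction is symmetric: from $\Gr_{\Psi_{j+1}}(M)$ finitely generated we extract $\Im(\phi)$ (submodule) and $C$ (quotient) finitely generated, then $K\cong C[-1]$ is finitely generated, and the first sequence gives $\Gr_{\Psi_j}(M)$ finitely generated. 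There is no real obstacle — the only point to be careful about is that the shift $C\cong K[1]$ is compatible with finite generation (which is immediate, as shifting a graded module by one degree does not affect finite generation).
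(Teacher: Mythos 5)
Your proof is correct and follows essentially the same route as the paper's: reduce via Lemma \ref{lem:ComparFilt} to consecutive filtrations in the interpolating chain, then use the four-term exact sequence from the preceding unlabeled lemma together with the Noetherian hypothesis to pass finite generation of the associated graded module back and forth. The only difference is one of explicitness — you spell out the shift-invariance of goodness and the symmetric reverse implication, both of which the paper leaves implicit in ``we may suppose that'' and ``the other implication is proven in a similar way.''
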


\begin{proof}
By Lemma \ref{lem:ComparFilt}, we may suppose that $F^iM \subset \Phi^iM \subset F^{i+1}M \subset \Phi^{i+1}M$ for any $i\geq 0$.
Recall that a filtration is good if and only if the associated graded module is finitely generated.
Consider the exact sequence of $\Gr(A)$-modules
$0 \to K \to \Gr_F(M) \to \Gr_{\Phi}(M) \to C$ from the previous lemma. Suppose that $F$ is a good filtration. Then $\Gr_F(M)$ is finitely generated and hence Noetherian. Thus so is $K$, and by the previous lemma so is $C$. Hence $\Gr_{\Phi}(M)$ is also finitely generated and hence $\Phi$ is good. The other implication is proven in a similar way.
\end{proof}

The proof of Proposition \ref{prop:WonFilt} will be based on the following proposition from \cite[ \S 3.7]{Wal1}.

\begin{prop}[Casselman-Osborne]
\label{prop:Osborne} Let $\mathfrak{n}$ denote the nilradical of the
complexified Lie algebra of the minimal parabolic subgroup of $G$ and
${\mathfrak{k}}$ denote the complexified Lie algebra of the maximal compact
subgroup of $G$. Let $Z_{G}({\mathfrak{g}}):=U({\mathfrak{g}})^{G}$. Then

There exists a finite-dimensional subspace $E \subset U({\mathfrak{g}})$
such that
\[
U({\mathfrak{g}})^{i} \subset U(\mathfrak{n})^{i}EZ_{G}({\mathfrak{g}%
})U({\mathfrak{k}}) \text{ and therefore } U({\mathfrak{g}})= U(\mathfrak{n}%
)EZ_{G}({\mathfrak{g}})U({\mathfrak{k}})
\]
\end{prop}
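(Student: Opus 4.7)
The plan is to reduce the statement to an analogous one in the associated graded setting and then lift it back, using three classical ingredients: Iwasawa PBW, Chevalley restriction for real reductive groups, and Kostant's symbol-surjectivity for the ring of $G$-invariants in $U(\mathfrak{g})$. First I would use the Iwasawa decomposition of the complexified Lie algebra, $\mathfrak{g} = \mathfrak{n} \oplus \mathfrak{a} \oplus \mathfrak{k}$ as complex vector spaces, where $\mathfrak{a}$ is the complexification of the Iwasawa abelian subalgebra associated to the given minimal parabolic. PBW then gives $U(\mathfrak{g}) = U(\mathfrak{n})\,U(\mathfrak{a})\,U(\mathfrak{k})$ in a degree-preserving way, so that $U(\mathfrak{g})^i = \sum_{a+b+c\leq i} U(\mathfrak{n})^a U(\mathfrak{a})^b U(\mathfrak{k})^c$, and the associated graded commutative algebra decomposes as $S(\mathfrak{g}) \cong S(\mathfrak{n})\otimes S(\mathfrak{a})\otimes S(\mathfrak{k})$.

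The next step is the graded analogue: there exists a finite-dimensional graded subspace $\bar{E}\subset S(\mathfrak{a})$ such that $S(\mathfrak{g})^i \subset S(\mathfrak{n})^i\cdot \bar{E}\cdot S(\mathfrak{g})^G\cdot S(\mathfrak{k})$ for every $i$. The choice of $\bar{E}$ comes from Chevalley restriction for real reductive groups: the restriction map $S(\mathfrak{g})^G \twoheadrightarrow S(\mathfrak{a})^W$, with $W$ the little Weyl group $N_K(\mathfrak{a})/Z_K(\mathfrak{a})$, is surjective, and by Chevalley's theorem $S(\mathfrak{a})$ is a free $S(\mathfrak{a})^W$-module of rank $|W|$ on a homogeneous basis $\bar{E}$. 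To prove the inclusion, take $u\in S(\mathfrak{g})^i$, expand it in the Iwasawa tensor factorization, expand the $S(\mathfrak{a})$-factor of each monomial as $\sum_{e\in\bar{E}} e\cdot f_e$ with $f_e\in S(\mathfrak{a})^W$ homogeneous, and lift each $f_e$ to a homogeneous $\tilde f_e\in S(\mathfrak{g})^G$ of the same degree. The substitution $f_e = \tilde f_e - (\tilde f_e - f_e)$ splits $u$ into a main term lying in $S(\mathfrak{n})^i\cdot \bar{E}\cdot S(\mathfrak{g})^G\cdot S(\mathfrak{k})$ and an error term. The key observation is that $\tilde f_e - f_e$ restricts to zero on $\mathfrak{a}$, so every monomial appearing in it has at least one $\mathfrak{n}$- or $\mathfrak{k}$-factor; hence the error term has strictly smaller $\mathfrak{a}$-degree while keeping total filtration degree $\leq i$. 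This closes an induction on the $\mathfrak{a}$-degree, the base case ($\mathfrak{a}$-degree zero) being immediate once one demands $1\in \bar{E}$.

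Finally I would lift the graded inclusion to $U(\mathfrak{g})$ by induction on $i$. For $u\in U(\mathfrak{g})^i$ the top symbol $\sigma^i(u)\in S^i(\mathfrak{g})$, by the graded step, equals $\sum_\alpha \bar n_\alpha \bar e_\alpha \bar z_\alpha \bar k_\alpha$ with $\deg \bar n_\alpha\leq i$ and $\bar z_\alpha\in S(\mathfrak{g})^G$ homogeneous. Kostant's symbol-surjectivity lifts each $\bar z_\alpha$ to some $z_\alpha\in Z_G(\mathfrak{g})$ with the same top symbol; fixing once and for all a lift $E\subset U(\mathfrak{g})$ of $\bar{E}$, and taking PBW-lifts $n_\alpha\in U(\mathfrak{n})^{\deg \bar n_\alpha}$ and $k_\alpha\in U(\mathfrak{k})$, the element $v := \sum_\alpha n_\alpha e_\alpha z_\alpha k_\alpha$ lies in $U(\mathfrak{n})^i\cdot E\cdot Z_G(\mathfrak{g})\cdot U(\mathfrak{k})$ and satisfies $\sigma^i(v)=\sigma^i(u)$, so $u-v\in U(\mathfrak{g})^{i-1}$ and the induction closes. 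The main obstacle I expect is the classical input of Chevalley restriction in the Iwasawa, as opposed to Cartan-subalgebra, setting: verifying that $S(\mathfrak{g})^G$ surjects onto $S(\mathfrak{a})^W$ with the correct little Weyl group, and that Kostant's symbol map $\sigma:Z_G(\mathfrak{g})\to S(\mathfrak{g})^G$ is surjective in each degree. Both are standard for real reductive groups but need to be cited rather than derived.
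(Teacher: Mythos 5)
The paper gives no proof of Proposition \ref{prop:Osborne}; it simply cites Wallach \cite[\S\S 3.7]{Wal1}. Your reconstruction is correct and follows the same overall strategy as the cited argument (Iwasawa PBW, Chevalley/Harish-Chandra restriction to the Iwasawa $\mathfrak{a}$, finiteness of $U(\mathfrak{a})$ over the image of the center, then induction), but with one organizational difference worth noting: you prove the inclusion entirely in the graded algebra $S(\mathfrak{g})$ first and then lift once at the end via symbols, whereas the proof in Wallach establishes only the finiteness of $U(\mathfrak{a})$ over the image of the Harish-Chandra projection (itself proved via graded considerations) and then runs the inductive argument directly inside $U(\mathfrak{g})$. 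Your version is arguably cleaner to write but imports two nontrivial classical facts that you correctly flag. The first, surjectivity of the restriction $S(\mathfrak{g})^G \twoheadrightarrow S(\mathfrak{a})^W$ onto the little-Weyl-group invariants, is not the standard Chevalley restriction to a Cartan subalgebra but rather Helgason's restriction theorem (restriction from a Cartan $\mathfrak{h}\supset\mathfrak{a}$ to $\mathfrak{a}$ surjects $S(\mathfrak{h})^{W_{\mathfrak{h}}}\onto S(\mathfrak{a})^{W}$); this should be cited precisely, and one should note that for the argument one only really needs $S(\mathfrak{a})$ to be a finite module over the image, not full surjectivity. The second fact, that the symbol map $U(\mathfrak{g})^G \to S(\mathfrak{g})^G$ is graded-surjective, is true but is a direct consequence of the exactness of $G$-invariants for a reductive $G$ (the symbol map on each filtration quotient is $G$-equivariant), rather than a theorem of Kostant; attributing it to Kostant's separation of variables slightly overstates what is needed. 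With these attributions adjusted, the proof is sound, including the two inductions (on $\mathfrak{a}$-degree in the graded step and on filtration degree in the lift), both of which close as claimed.
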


\begin{proof}[Proof of Proposition \ref{prop:WonFilt}]
Let $\pi \in \cM_{HC}(G)$. Let us first construct one $\n$-good $\g$-filtration.

Let $V \subset \pi$ be a finite dimensional $K$-invariant and $Z_{G}$ -invariant generating subspace. Let $E \subset U({\mathfrak{g}})$ be such that $U({\mathfrak{g}})^{i} \subset U(\mathfrak{n})^{i}EZ_{G}({\mathfrak{g}
})U({\mathfrak{k}}), $ as in Proposition \ref{prop:Osborne}.  Define $F^i :=U(\g)^{i} V$ and $\Phi^i:= U(\mathfrak{n})^{i}E V= U(\mathfrak{n})^{i}EZ_{G}({\mathfrak{g}
})U({\mathfrak{k}})V$. Note that $\Phi^i$ is a good $\n$-filtration and that $F^i \subset \Phi^i \subset F^{i+k}$, where $k$ is the maximal degree of an element in $E$.
Thus, by Corollary \ref{cor:CompGoodFilt}, $F^i$ is a good $\n$-filtration.

Now, let $f^i$ be any good $\g$-filtration on $\pi$. By Lemma \ref{lem:GoodCompar} $f^i$ is comparable to $F^i$ and thus, by Corollary \ref{cor:CompGoodFilt}, $f^i$ is $\n$-good.
\end{proof}

\end{document}